\documentclass[10pt]{amsart}
\usepackage{amsmath,amscd,amssymb,epsfig}

\parindent=1em
\baselineskip 15pt \textwidth=12.3cm \textheight=18.5cm

\newcommand{\bbz}{\mathbb{Z}}

\newcommand{\bbr}{\mathbb{R}}

\newcommand{\tA}{{\widetilde{A}}}
\newcommand{\tC}{{\widetilde{C}}}
\newcommand{\diam}{{\rm{Diam}}}

\newcommand{\dist}{\operatorname{dist}}

\newtheorem{theorem}{Theorem}[section]
\newtheorem{corollary}[theorem]{Corollary}
\newtheorem{proposition}[theorem]{Proposition}

\newtheorem{lemma}[theorem]{Lemma}
\newtheorem{defn}[theorem]{Definition}

\newtheorem{remark}[theorem]{Remark}

\newtheorem{observation}[theorem]{Observation}
\newtheorem{claim}[theorem]{Claim}

\numberwithin{figure}{section}

\begin{document}
\title[Triangle-Free Triangulations]{Triangle-Free Triangulations}

%\author[Adin, Firer, Roichman]{%
\author{Ron M.\ Adin}
\address{Department of Mathematics\\ 
Bar-Ilan University\\
Ramat-Gan 52900\\
Israel}
\email{radin@math.biu.ac.il} 

\author{Marcelo Firer} 
\address{Department of Mathematics\\ 
Institute of Mathematics, Statistics and Computer Science (IMECC)\\
State University of Campinas (UNICAMP)\\
13.081 Campinas SP\\
Brazil}
\email{mfirer@ime.unicamp.br}

\author{Yuval Roichman}
\address{Department of Mathematics\\
Bar-Ilan University\\
Ramat-Gan 52900\\
Israel}
\email{yuvalr@math.biu.ac.il} 

\keywords{Triangulations, flips, group actions, Schreier graphs,
Coxeter groups, weak order, Hasse diagrams}

\date{January 27, 2009}

\begin{abstract}
The flip operation on colored inner-triangle-free triangulations
of a convex polygon is studied. It is shown that the affine Weyl
group $\widetilde{C}_n$ acts transitively on these triangulations
by colored flips, and that the resulting colored flip graph is 
closely related to a lower interval in the weak order on $\widetilde{C}_n$. 
Lattice properties of this order are then applied to compute the diameter.

\end{abstract}

\maketitle

\tableofcontents

\newpage

\section{Introduction}

%\subsection{Outline}\ \\

In a seminal paper, using volume computations in hyperbolic geometry,
Sleator, Tarjan and Thurston~\cite{STT} computed the diameter of the 
flip graph of all triangulations of a convex polygon.
For special classes of triangulations, the diameter problem -- 
and, sometimes, even the question of connectivity -- is still open.
For instance, monochromatic-triangle-free triangulations were introduced by Propp. 
Sagan~\cite{Sa1} showed that the corresponding flip graph is connected only if
two colors are used. The diameter in this case is not known.

This paper studies the set of inner-triangle-free triangulations,
which is contained in the set of monochromatic-triangle-free triangulations.  
Methods from Coxeter group theory are applied to describe the structure of 
the resulting colored flip graph and to compute its diameter.
It is shown that the affine Weyl group $\widetilde{C}_n$
acts transitively, by flips, on such triangulations.
The stabilizer is computed, leading to an interpretation of the flip graph
as a Schreier graph. This graph is closely related to 
a distinguished lower interval in the weak order on $\widetilde{C}_n$.
Lattice properties of this order are then applied to compute the diameter.

\section{Basic Concepts}

%\section{Colored Triangle Free Triangulations}

%\subsection{The Colored Flip Graph}\ \\

Label the vertices of a convex $(n+4)$-gon $P_{n+4}$ ($n > 0$) by
the elements $0,\ldots,n+3$ of the additive cyclic group $\bbz_{n+4}$.
Consider a triangulation (with no extra vertices) of the polygon.
Each edge of the polygon is called an {\em external edge} of the triangulation; 
all other edges of the triangulation are called {\em internal edges}, or {\em chords}.

\begin{defn}
A triangulation of a convex $(n+4)$-gon $P_{n+4}$ is called 
{\em inner-triangle-free} (or simply {\em triangle-free})
if it contains no triangle with $3$ internal edges.
The set of all triangle-free triangulations of $P_{n+4}$ is denoted $TFT(n)$.
\end{defn}

\begin{defn}
A chord in $P_{n+4}$ is called {\em short} if it connects the
vertices labeled $i-1$ and $i+1$, for some $i\in \bbz_{n+4}$.
\end{defn}

\begin{claim}\label{t.short-chord}
For $n>0$, a triangulation of $P_{n+4}$ is triangle-free if and only if
it contains only two short chords.
\end{claim}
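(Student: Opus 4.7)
The plan is to prove the claim by a double-counting argument combined with a bijection between short chords and ears.

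First I would classify the triangles of any triangulation of $P_{n+4}$ according to the number of their edges that are internal. Let $a$, $b$, $c$ be the number of triangles with exactly $1$, $2$, $3$ internal edges, respectively (since $n+4 \geq 5$, no triangle can have all three sides external). Using the standard facts that any triangulation of an $(n+4)$-gon has $n+2$ triangles and that every one of the $n+4$ external edges lies in a unique triangle, I would set up the two equations
\begin{equation*}
a + b + c = n+2, \qquad 2a + b = n+4,
\end{equation*}
and subtract them to get the key identity $c = a - 2$.

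Next I would establish a bijection between the short chords appearing in a given triangulation $T$ and the triangles of $T$ that have exactly two external edges (the ears, counted by $a$). If $\{i-1,i+1\}$ is a short chord of $T$, then the triangle $\{i-1,i,i+1\}$ is forced to lie in $T$, since it is the unique triangle on the side of the chord containing vertex $i$; this triangle has the two external edges $\{i-1,i\}$ and $\{i,i+1\}$, hence is an ear. Conversely, any ear of $T$ consists of two consecutive external edges meeting at some vertex $i$, closed by the short chord $\{i-1,i+1\}$. Thus the number of short chords in $T$ equals $a$.

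Combining the two steps, $T$ is triangle-free iff $c=0$ iff $a=2$ iff $T$ contains exactly two short chords. The argument is essentially Eulerian counting, and I expect no real obstacle beyond making sure the bijection between short chords and ears is correctly phrased; in particular, one should note that the short chord forces its adjacent ear triangle to appear in every triangulation containing that chord, which is the only non-trivial point.
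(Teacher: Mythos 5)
Your proof is correct and follows essentially the same route as the paper: a double count over the triangles classified by their number of internal edges, yielding the identity $a-c=2$ (the paper counts chord--triangle incidences, $a+2b+3c=2(n+1)$, where you count external-edge incidences, $2a+b=n+4$; the two are equivalent). Your explicit bijection between short chords and ears is a welcome addition, since the paper leaves that identification implicit.
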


\begin{proof}
Any triangulation of $P_{n+4}$ consists of $n+1$ diagonals and $n+2$ triangles.
Each chord lies in exactly $2$ triangles. Thus the average number
of chords per triangle is $\frac{2(n+1)}{n+2} = 2-\frac{2}{n+2}$.
By definition, a triangulation is triangle-free if and only if each triangle
contains at most $2$ chords. On the other hand, each triangle contains at
least one chord. One concludes that there are exactly two triangles
each containing only one chord, completing the proof.
\end{proof}

\begin{defn}
A {\em proper coloring} of a triangulation $T\in TFT(n)$ is a
labeling of the chords by $0,\dots,n$ in the following
inductive way: Choose a short chord and label it $0$. Inductively,
a chord which was not yet labeled and is contained in a triangle whose
other chord has been labeled $i$, is labeled $i+1$.

It is easy to see that this uniquely defines the coloring.
The set of all properly colored triangle-free triangulations is denoted
$CTFT(n)$.
\end{defn}

\begin{defn}
Each chord in a triangulation is a diagonal of a unique quadrangle
(the union of two adjacent triangles). Replacing this chord by the
other diagonal of that quadrangle is a {\em flip} of the chord.

The {\em colored flip graph} $\Gamma_n$ is defined as follows:
%The {\em flip graph} $\Gamma_n$ is the following arc-colored graph:
the nodes are all the colored triangle-free triangulations in
$CTFT(n)$. Two triangulations are connected in $\Gamma_n$ by an
arc colored $i$ if one is obtained from the other by a flip of the
chord labeled $i$.
\end{defn}

\medskip

%\subsection{Enumeration}\ \\

By Claim~\ref{t.short-chord},

\begin{corollary}\label{t.CTFTeq2TFT}
For $n>0$, any triangle-free triangulation of $P_{n+4}$ has exactly two
proper colorings. In other words,
$$
\#CTFT(n) = 2\cdot\#TFT(n).
$$
\end{corollary}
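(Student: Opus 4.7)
The plan is to show that every $T \in TFT(n)$ admits exactly two proper colorings, one for each choice of starting short chord; the corollary then follows immediately from Claim~\ref{t.short-chord}, which guarantees exactly two short chords in $T$.

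The key structural step is to identify the \emph{chord adjacency graph} $G_T$---with vertex set the $n+1$ chords of $T$ and edges joining two chords that share a common triangle---as a path. By the proof of Claim~\ref{t.short-chord}, precisely two triangles of $T$ contain a single chord (the ``ear'' triangles at the short chords), while each of the remaining $n$ triangles contains exactly two chords. Since each chord lies in exactly two triangles of $T$, a short chord has degree one in $G_T$ and every other chord has degree two; moreover, $G_T$ is connected because the dual graph of the triangulation is a connected tree and consecutive edges of that tree share a triangle. A connected graph on $n+1$ vertices with maximum degree two and exactly two vertices of degree one is a path, so $G_T$ is a path whose endpoints are precisely the two short chords.

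Once $G_T$ is known to be a path, a straightforward induction on the label shows that the coloring procedure is forced to walk along it: starting from either short chord labeled $0$, each subsequent label $i+1$ attaches to the unique unlabeled neighbor on the path of the chord labeled $i$. The procedure thus terminates with a unique proper coloring using every label in $\{0,1,\ldots,n\}$ exactly once, and the two starting choices yield distinct colorings since they assign label $0$ to different chords (using $n>0$). This produces exactly two proper colorings per $T \in TFT(n)$. The main technical point lies in the structural identification of $G_T$ as a path, that is, in upgrading the triangle counts of Claim~\ref{t.short-chord} to the statement that $G_T$ has no branching and no disconnection; once this is in hand, the bookkeeping for the coloring procedure is routine.
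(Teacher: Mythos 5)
Your proof is correct and follows the same route as the paper, which simply cites Claim~\ref{t.short-chord} together with the uniqueness of the coloring asserted in the definition; you have merely supplied the details the paper leaves implicit. The identification of the chord adjacency graph as a path with the two short chords as endpoints is exactly the right way to justify both the uniqueness of the coloring for each starting chord and the count of two colorings per triangulation.
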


\begin{defn}
Define a map
$$
\varphi:CTFT(n) \to \bbz_{n+4}\times \bbz_2^{n}
$$
as follows: Let $T\in CTFT(n)$. If the (short) chord labeled $0$
in $T$ is $[a-1,a+1]$ for $a\in \bbz_{n+4}$, let $\varphi(T)_0:= a$. 
For $1\le i\le n$, assume that the chord labeled $i-1$ in $T$
is $[a-k,a+m]$ for some $k,m\ge 1$, $k+m = i+1$. 
The chord labeled $i$ is then either $[a-k-1,a+m]$ or $[a-k,a+m+1]$. 
Let $\varphi(T)_{i}$ be $0$ in the former case and $1$ in the latter.
\end{defn}

\begin{observation}\label{bijection-varphi}
$\varphi$ is a bijection.
\end{observation}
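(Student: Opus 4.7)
My plan is to construct an explicit inverse map $\psi:\bbz_{n+4}\times\bbz_2^n\to CTFT(n)$ and to show that $\psi$ and $\varphi$ are mutually inverse by matching their inductive recipes.

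Given $(a,\epsilon_1,\ldots,\epsilon_n)$, $\psi$ first draws the short chord $[a-1,a+1]$ and labels it $0$. Then, inductively, if the chord already labeled $i-1$ is $[a-k,a+m]$ with $k,m\ge 1$ and $k+m=i+1$, $\psi$ draws $[a-k-1,a+m]$ when $\epsilon_i=0$ and $[a-k,a+m+1]$ when $\epsilon_i=1$, labeling the new chord $i$. Together with a single external edge of $P_{n+4}$, this new chord cuts off exactly one new triangle lying ``outside'' the chord labeled $i-1$, and successive chords are nested in the sense that their parameters $(k,m)$ are monotone componentwise, so no two chords cross.

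I would then verify four bookkeeping points: (i) the invariant $k+m=i+2$, $k,m\ge 1$ is preserved at each step; (ii) every chord produced is a genuine chord of $P_{n+4}$ -- this fails exactly when $k+m\equiv 0$ or $-2\pmod{n+4}$, which is excluded since the relevant values of $k+m$ lie in $\{2,\ldots,n+1\}$; (iii) the $n+1$ resulting chords triangulate $P_{n+4}$ as an inner triangle $(a-1,a,a+1)$, a fan of $n$ ``shell'' triangles each containing two chords and one external edge, and a terminal outer triangle, so the triangulation is triangle-free by Claim~\ref{t.short-chord}; and (iv) the labels $0,1,\ldots,n$ coincide with the proper coloring seeded at $[a-1,a+1]$, because $\psi$'s inductive rule is identical to the one in the definition of proper coloring. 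Granted (i)--(iv), the identities $\varphi\circ\psi=\mathrm{id}$ and $\psi\circ\varphi=\mathrm{id}$ are immediate from comparing the inductive recipes step by step.

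The only genuinely delicate point is the non-degeneracy in (ii), which I would isolate as a short numerical sublemma; the rest is direct combinatorial bookkeeping tracking a planar fan that grows one triangle at a time. (As a sanity check, the sizes match: $\#(\bbz_{n+4}\times\bbz_2^n)=(n+4)\cdot 2^n$, consistent with $\#CTFT(n)$ obtained from counting the $n+4$ choices of short chord and the $2^n$ binary extension choices.)
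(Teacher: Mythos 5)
Your construction of the explicit inverse $\psi$ is correct, and the bookkeeping points (i)--(iv) are exactly the right things to check: the parameter sum $k+m=i+2$ stays in $\{2,\dots,n+2\}$, so no chord degenerates or becomes an external edge, the nested ``snake'' of chords yields $n+2$ triangles each with at most two internal edges, and the labels reproduce the proper coloring seeded at $[a-1,a+1]$. The paper states this as an Observation with no proof at all, so your argument simply supplies the routine verification the authors left implicit; there is nothing in the paper to contrast it with.
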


%\begin{proof}
%The chords in the triangulation are determined by $\varphi$.
%Indeed, the chord labeled $1$ is $[\varphi(0)-1, (\varphi(0)+1)]$.
%If the diagonal labeled by $k$ is $[l,m]$, where
%$l<m$, then the diagonal labeled by $k+1$ is $[l, (m+1)\
%\rm{mod}\ n]$ if $\varphi(k)=0$, or $[m, (l-1)\ \rm{mod}\ n]$ if
%$\varphi(k)=1$.
%\end{proof}

\begin{corollary}\label{t.nTFT}
For $n>0$, the number of triangle-free triangulations of a convex $(n+4)$-gon is
$$
%\#TFT(n)=(n+4)\cdot 2^{n-1}.
\#CTFT(n)=(n+4)\cdot 2^{n}.
$$
\end{corollary}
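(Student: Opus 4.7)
The plan reduces the corollary immediately to Observation~\ref{bijection-varphi}. Since $\varphi : CTFT(n) \to \bbz_{n+4} \times \bbz_2^{n}$ is a bijection, $\#CTFT(n) = |\bbz_{n+4} \times \bbz_2^{n}| = (n+4) \cdot 2^{n}$, which is exactly the claimed formula. Hence the entire substance of the corollary lies in that preceding observation.

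If Observation~\ref{bijection-varphi} itself required verification, my approach would be to exhibit an explicit inverse. Given $(a,\epsilon_1,\ldots,\epsilon_n) \in \bbz_{n+4} \times \bbz_2^{n}$, one constructs chords $C_0, C_1, \ldots, C_n$ recursively, taking $C_0 = [a-1,a+1]$ and, having $C_{i-1} = [a-k,a+m]$ with $k+m=i+1$, choosing $C_i$ to be $[a-k-1,a+m]$ or $[a-k,a+m+1]$ according as $\epsilon_i$ is $0$ or $1$. Each consecutive pair $C_{i-1}, C_i$ together with a single polygon edge bounds a triangle, so these $n+1$ chords are non-crossing and cut $P_{n+4}$ into $n+2$ triangles.

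The main point to check is that the resulting triangulation lies in $CTFT(n)$ and that $\varphi$ sends it back to the input tuple. Only $C_0$ and $C_n$ are short chords (the latter because $k+m = n+2 \equiv -2 \pmod{n+4}$ at the final step, while intermediate $C_i$ have endpoint difference $i+2 \not\equiv \pm 2$), so by Claim~\ref{t.short-chord} the triangulation is triangle-free; and the proper coloring started at $C_0$ then labels the chords as $C_0, C_1, \ldots, C_n$ in this order, so that $\varphi$ returns $(a,\epsilon_1,\ldots,\epsilon_n)$. The mildest potential obstacle is the geometric verification that the staircase $C_0, \ldots, C_n$ really is non-crossing and tiles $P_{n+4}$, but this is a straightforward induction on $i$ and does not represent a substantial difficulty.
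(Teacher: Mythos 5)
Your proposal is correct and matches the paper, which likewise derives the count directly from the bijection $\varphi:CTFT(n)\to\bbz_{n+4}\times\bbz_2^{n}$ of Observation~\ref{bijection-varphi} and offers no further argument. Your additional sketch of the inverse construction (the staircase of chords $C_0,\ldots,C_n$, with only $C_0$ and $C_n$ short) is a sound verification of that observation, which the paper leaves implicit.
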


\section{Group Action by Flips}

In this section we assume that $n>1$.

\subsection{The $\tC_n$-Action}

Let $\tC_n$ be the affine Weyl group generated by
%the set of generators 
$$
S=\{s_0,s_1,\ldots, s_{n-1},s_n\}
$$ 
subject to the Coxeter relations
\begin{equation}\label{relation1}
s_i^2=1\qquad (\forall i),
\end{equation}
\begin{equation}\label{relation2}
(s_is_j)^2=1\qquad (|j-i|>1),
\end{equation}
\begin{equation}\label{relation3}
(s_is_{i+1})^3=1\qquad  (1\le i<n-1),
\end{equation}
and
\begin{equation}\label{relation4}
(s_is_{i+1})^4=1 \qquad (i=0,n-1).
\end{equation}

The %affine Weyl 
group $\tC_n$ acts naturally on $CTFT(n)$ by
flips: generator $s_i$ flips the chord labeled $i$ in $T\in
CTFT(n)$, provided that the resulting colored triangulation still
belongs to $CTFT(n)$. If this is not the case, $T$ is unchanged by
$s_i$.

Notice that $s_i(T)=T$ if and only if
$\varphi(T)_{i}=\varphi(T)_{i+1}$; also the only short chords are
labeled by $0$ and $n$, hence $s_0$ and $s_n$ never leave the
corresponding chords unchanged. Furthermore, one can easily verify
that by definition, the following observation holds.

\begin{observation}\label{action-on-vectors}
For every  $T\in CTFT(n)$ %and $0<i<n$
%, $s_i(T)=T$ if and only if $\varphi(T)_{i}=\varphi(T)_{i+1}$; moreover,
$$
(\varphi(s_0T))_j=\begin{cases} \varphi(T)_j,&\ {\rm{if}}\ j\ne 0,1,\\
\varphi(T)_0+1 \mod n+4, &\ {\rm{if}}\  j= 0 \ {\rm{and}}\ \varphi(T)_1=0, \\
\varphi(T)_0-1 \mod n+4, &\ {\rm{if}}\  j= 0 \ {\rm{and}}\ \varphi(T)_1=1, \\
\varphi(T)_1+1 \mod 2, &\ {\rm{if}}\  j= 1 \ {\rm{and}}\ \varphi(T)_1=0; \\
\end{cases}
$$
%and
$$
(\varphi(s_nT)_j=\begin{cases} \varphi(T)_j,&\ {\rm{if}}\  j\ne n,00\\
\varphi(T)_n+1 \mod 2, &\ {\rm{if}}\  j= n;
\end{cases}
$$
and
$$
(\varphi(s_i T)_j=\varphi(T)_{s_i(j)}\qquad (0<i<n),
$$
\end{observation}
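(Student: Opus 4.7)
The statement is a direct, unified unwinding of the definition of $\varphi$, applied separately to each of the three kinds of generators $s_0$, $s_n$ and $s_i$ with $0<i<n$. The key geometric input is that, by Claim~\ref{t.short-chord}, a triangle-free triangulation of $P_{n+4}$ decomposes as a linear sequence of $n+2$ triangles---two ``ears'' at the ends (each with two external edges and a single short chord) and $n$ ``strips'' in between---and the labeling $0,1,\dots,n$ follows this sequence from one ear to the other, with the coordinates of $\varphi(T)$ recording the center of the first short chord together with the left/right extension at each successive step.

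For $s_i$ with $0<i<n$, the plan is to write chord $i-1=[a-k,a+m]$; the pair $(\varphi(T)_i,\varphi(T)_{i+1})$ then determines chords $i$ and $i+1$, and hence the four vertices of the quadrangle containing chord $i$. When $s_i$ acts nontrivially, i.e.\ $\varphi(T)_i\ne\varphi(T)_{i+1}$, flipping the diagonal leaves chords $i-1$ and $i+1$ unchanged as subsets of $\bbz_{n+4}$ but swaps the ``left'' and ``right'' extensions at positions $i$ and $i+1$. Reading $\varphi(s_iT)$ off from its definition yields $\varphi(s_iT)_j=\varphi(T)_{s_i(j)}$, with $s_i$ acting on $\{0,\dots,n\}$ as the transposition $(i,i+1)$.

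For $s_0$, write chord $0=[a-1,a+1]$ with $a=\varphi(T)_0$. The quadrangle containing chord $0$ is glued from the first ear triangle and the adjacent strip triangle, whose third vertex is $a-2$ or $a+2$ according as $\varphi(T)_1=0$ or $1$. The flip in each case produces a new short chord with center $a\pm 1$ in $\bbz_{n+4}$, which by the natural convention inherits the label $0$ so that $s_0$ is an involution. Applying the definition of $\varphi$ to $s_0T$ then shows that only $\varphi_0$ and $\varphi_1$ change: $\varphi_0$ shifts by the claimed amount, and $\varphi_1$ toggles because the old chord $1$ (still present, and still labeled $1$ in $s_0T$) extends in the opposite direction relative to the new short chord. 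The analysis of $s_n$ is strictly simpler: the quadrangle around the terminal short chord is determined by $\varphi(T)_{n-1}$ alone, and the flip only toggles $\varphi_n$.

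The only real obstacle is bookkeeping: one must fix an orientation of $P_{n+4}$ and apply it consistently both to the quadrangle picture and to the inductive definition of $\varphi$ in order to obtain the correct signs in the $s_0$-case. Once a convention is fixed, each case reduces to inspecting two small configurations, and no new ideas are required beyond the definitions of $\varphi$ and of the flip.
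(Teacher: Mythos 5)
Your case-by-case unwinding of the definition of $\varphi$ is correct and is exactly the verification the paper has in mind: the authors give no proof at all, stating only that the observation ``can be easily verified by definition,'' and your proposal simply fills in that routine check (ears versus strips, the quadrangle of chord $i$, and the resulting swap/toggle/shift of coordinates). Your explicit flag about fixing an orientation convention in the $s_0$-case is well taken, since that is precisely where the signs in the stated formula must be matched against the convention $\varphi_i=0$ for extension toward $a-k-1$.
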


\begin{proposition}\label{t.action}
This operation determines a transitive $\tC_n$-action $CTFT(n)$.
\end{proposition}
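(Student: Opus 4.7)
The plan is to work entirely on the target space of the bijection $\varphi$, using the explicit formulas from Observation~\ref{action-on-vectors}, since this reduces everything to calculations on vectors in $\bbz_{n+4}\times\bbz_2^n$. I would split the argument into two parts: (1) showing that the operation really defines a $\tC_n$-action, by verifying that each Coxeter relation (\ref{relation1})--(\ref{relation4}) holds pointwise on $CTFT(n)$, and (2) proving transitivity by moving an arbitrary element to a fixed canonical one.

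For part (1), the involution relations $s_i^2=1$ and the commutations $(s_is_j)^2=1$ for $|i-j|>1$ are immediate from Observation~\ref{action-on-vectors}, since distant generators affect disjoint coordinates of $\varphi(T)$. For the braid relation $(s_is_{i+1})^3=1$ with $1\le i\le n-2$, both generators act as adjacent transpositions on the binary block $(v_1,\ldots,v_n)$, so it reduces to the standard $\sym_3$ braid relation. For $(s_{n-1}s_n)^4=1$, a direct computation on the pair $(v_{n-1},v_n)$ is routine: $s_n$ flips the last bit while $s_{n-1}$ transposes the two, and four iterations of their product return to the starting state. The delicate case is $(s_0s_1)^4=1$, which I would verify by a short case analysis on $(v_1,v_2)\in\bbz_2^2$; in each of the four subcases one tracks the triple $(a,v_1,v_2)$ through eight alternating applications of $s_0$ and $s_1$, checking that $a$ returns to its initial value and the bits are restored.

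For part (2), I would first observe that $s_1,\ldots,s_n$ restrict to an action on the binary block, with $s_1,\ldots,s_{n-1}$ permuting the $n$ positions and $s_n$ flipping the last one. By conjugating $s_n$ with words in $s_1,\ldots,s_{n-1}$ one can flip any single bit, hence reach any binary vector from $0^n$. So every $T$ can first be moved to $\varphi^{-1}(a,0^n)$ with $a=\varphi(T)_0$. Then the word $s_ns_{n-1}\cdots s_1s_0$ takes $(a,0^n)$ to $(a+1,0^n)$: $s_0$ yields $(a+1,1,0,\ldots,0)$, the sequence $s_1,s_2,\ldots,s_{n-1}$ walks the lone $1$ from position $1$ to position $n$, and $s_n$ then flips it to $0$. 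Iterating this word, every value of $a\in\bbz_{n+4}$ is reachable, so from any $T$ we can arrive at the canonical element $\varphi^{-1}(0,0^n)$, giving transitivity.

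The main obstacle is the verification of $(s_0s_1)^4=1$. Unlike the other relations, it is the only one where the cyclic coordinate $a\in\bbz_{n+4}$ and the binary coordinates interact nontrivially, so it cannot be reduced to a generic symmetric-group or sign-change computation and truly requires the case-by-case calculation outlined above. Everything else amounts to routine bookkeeping with the formulas of Observation~\ref{action-on-vectors}.
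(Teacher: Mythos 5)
Your proposal is correct and follows essentially the same route as the paper: the relations are checked via the vector encoding of Observation~\ref{action-on-vectors} (the paper argues relations (\ref{relation1}) and (\ref{relation2}) geometrically and explicitly offers the vector computation as the way to handle (\ref{relation3}) and (\ref{relation4}), leaving the details to the reader), and transitivity is obtained by letting the parabolic subgroup $\langle s_1,\dots,s_n\rangle\cong B_n$ act transitively on the binary block while $s_0$-based words adjust the cyclic coordinate. Your version is somewhat more explicit (e.g., the word $s_ns_{n-1}\cdots s_1s_0$ incrementing the $\bbz_{n+4}$-coordinate), and you correctly single out $(s_0s_1)^4=1$ as the only relation where the two types of coordinates interact.
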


\begin{proof}
To prove that the operation is a $\tC_n$-action, % that this colored flip
%operation
it suffices to show that it is consistent with the defining
Coxeter relations of $\widetilde{{C}}_{n}$.
%The first set of relations is trivial, since
Indeed, for every $i$, $s_{i}$ acts on a particular triangulation
$T\in CTFT(n)$ by flipping the diagonal labeled by $i$ or leaving
it unchanged; in both cases $s_i^2(T)=T$.
% the (in this case $s_{i}\left( T\right) \neq T$ but $s_{i}^{2}\left(
%T\right) =T$) or as a stabilizer, then $s_{i}\left( T\right) =T$.
%For the second set of relations note that, for
If $\left\vert j-i\right\vert
>1$, $s_{i}$ and $s_{j}$ act on diagonals of quadrangles with no
common triangle, hence $s_{i}$
and $s_{j}$ commute. %, so that those relations follows from the
%first set.
Thus relation (\ref{relation2}) is satisfied.
%For relation (\ref{relation3}),
Relations (\ref{relation3}) and (\ref{relation4}) may be verified
by a direct calculation of the corresponding flip operation,
taking in account the relative position of the relevant chords.
Alternatively, all relations may be easily verified using
Observation~\ref{action-on-vectors}. We leave verification of the
details to the reader.

\smallskip

To prove that the action is transitive, notice first that $s_0$
changes the location of the chord labeled by $0$, where the
cyclic orientation of this change depends on the relative position
of the chord labeled by $1$. It, thus, suffices to prove that the
maximal parabolic subgroup of $\tC_n$, $\langle
s_1,\dots,s_n\rangle$ acts transitively on all colored
triangle-free triangulations with a given $0$ chord. Indeed, the
parabolic subgroup  $\langle s_1,\dots,s_n\rangle$ is isomorphic
to the classical Weyl group $B_n$. By
Observation~\ref{action-on-vectors}, the restricted $B_n$-action
on all colored triangle-free triangulations with a given $0$
chord, may be identified with the natural $B_n$-action on all
subsets of $\{1,\dots,n\}$, and is thus transitive.

\end{proof}

\subsection{Stabilizer}\label{subsection-stab}

Define
$$
g_0 := s_0 s_1 \cdots s_{n-2} s_n s_{n-1} s_n s_{n-2} \cdots s_1 s_0 \in \tC_n
$$
and
$$
g_n := (s_n \cdots s_0)^{n+4} \in \tC_n.
$$
Denote:
$$
T_0:=\varphi^{-1}(0,\dots,0),
$$
the {\em canonical colored star triangulation}.

\begin{theorem}\label{t.stabilizer}
The subgroup $St_n = \langle g_0,s_1,\ldots,s_{n-1},g_n \rangle$ of $\tC_n$
is the stabilizer, under the $\tC_n$-action on $CTFT(n)$, of the canonical
colored star triangulation $T_0$. %with $\varphi(T)=(0,0,\ldots,0)$.
Stabilizers of other colored triangulations are subgroups of $\tC_n$ 
conjugate to $St_n$.
\end{theorem}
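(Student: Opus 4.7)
The plan is to establish the two containments $St_n \subseteq {\rm Stab}(T_0)$ and ${\rm Stab}(T_0) \subseteq St_n$ separately; the conjugation claim will then follow by standard means. Given $T \in CTFT(n)$, transitivity (Proposition~\ref{t.action}) provides $w \in \tC_n$ with $w T_0 = T$, and then $h T = T$ iff $w^{-1} h w \in {\rm Stab}(T_0) = St_n$, giving ${\rm Stab}(T) = w\, St_n\, w^{-1}$.

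For $St_n \subseteq {\rm Stab}(T_0)$, I would verify each generator using Observation~\ref{action-on-vectors}. For $1 \le i \le n-1$, $s_i$ merely swaps coordinates $i$ and $i+1$ of $\varphi(T_0) = (0, \ldots, 0)$, so it fixes $T_0$. Setting $t := s_n s_{n-1} \cdots s_0$, a step-by-step application of Observation~\ref{action-on-vectors} shows that $t$ sends $\varphi^{-1}(a, 0, \ldots, 0)$ to $\varphi^{-1}(a+1, 0, \ldots, 0)$ (indices in $\bbz_{n+4}$): $s_0$ produces $(a+1, 1, 0, \ldots, 0)$, the successive $s_i$ propagate the lone $1$ rightward to position $n$, and the final $s_n$ extinguishes it. Iterating $n+4$ times returns $T_0$, so $g_n \in {\rm Stab}(T_0)$. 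A similar direct tracing (the $1$ introduced by the initial $s_0$ is carried only as far as position $n-1$, then $s_n s_{n-1} s_n$ acts on the boundary pair, and the subsequent letters return the $1$ to position $1$ where the final $s_0$ absorbs it) verifies $g_0 T_0 = T_0$.

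For the reverse containment, transitivity together with Corollary~\ref{t.nTFT} yields $[\tC_n : {\rm Stab}(T_0)] = (n+4) \cdot 2^n$, so it suffices to show $[\tC_n : St_n] \le (n+4) \cdot 2^n$. To this end I would exhibit an explicit set of representatives. The parabolic subgroup $\langle s_1, \ldots, s_n\rangle$ is of Coxeter type $B_n$ (order $2^n n!$) and contains $\langle s_1, \ldots, s_{n-1}\rangle \cong S_n$ as a subgroup of index $2^n$; pick left-coset representatives $\{v_\epsilon\}_{\epsilon \in \{0,1\}^n}$ and set $R := \{v_\epsilon t^a : \epsilon \in \{0,1\}^n,\ 0 \le a \le n+3\}$, so that $|R| = (n+4) \cdot 2^n$. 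I would then show by induction on Coxeter length that every $w \in \tC_n$ lies in $r \cdot St_n$ for some $r \in R$: the inductive step rewrites $s_i \cdot v_\epsilon t^a$ in the form $v_{\epsilon'} t^{a'} \cdot h$ with $h \in St_n$, using the defining relations of $\tC_n$ together with $g_n = t^{n+4}$ (to keep the exponent $a$ in range) and $s_1, \ldots, s_{n-1} \in St_n$ (to absorb the $S_n$-factor of the $B_n$-coset decomposition).

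The main obstacle will be this closure-under-generators step, and especially the case $i = 0$. The element $g_0$ is essentially a conjugate by $s_0$ of a $B_n$-word, and its membership in $St_n$ is precisely what allows $s_0 \cdot v_\epsilon t^a$ to be rewritten in canonical form; without $g_0$, the braid and commutation relations of $\tC_n$ alone cannot move $s_0$ across the $B_n$-coset structure. Once the closure is verified, $[\tC_n : St_n] \le (n+4) \cdot 2^n$, which combined with $St_n \subseteq {\rm Stab}(T_0)$ forces equality.
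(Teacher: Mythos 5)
Your proposal is sound and reaches the theorem by a genuinely different route from the paper's for the hard direction. The first half coincides: both arguments check that each generator of $St_n$ fixes $T_0$ (the paper merely asserts this is clear; your explicit tracing through Observation~\ref{action-on-vectors} is correct) and both reduce the reverse inclusion to the index count $[\tC_n : St_n] \le (n+4)\cdot 2^n$, given that orbit--stabilizer and Corollary~\ref{t.nTFT} force $[\tC_n : {\rm Stab}(T_0)] = (n+4)\cdot 2^n$. The paper establishes the index bound \emph{geometrically}: it realizes $\tC_n$ in its reflection representation on $\bbr^n$, exhibits fundamental domains $Fund_1$ for $\tC_n$ (a simplex) and $Fund_2$ for $St_n$ (a prism over a type-$\tA_{n-1}$ alcove, Observations~\ref{t.Fund_1}--\ref{t.Fund_2}), and computes the ratio of volumes via Gram determinants. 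You instead propose a purely algebraic coset enumeration: exhibit $(n+4)\cdot 2^n$ words and prove by induction on length that left multiplication by any Coxeter generator stays within $R\cdot St_n$. This is essentially the content of the paper's Proposition~\ref{R_n} and Lemma~\ref{graph-description0}, which the paper proves \emph{after} (and partly using) the stabilizer theorem by exactly the braid-relation case analysis you anticipate; your route would make that machinery do double duty and would remove the geometry entirely. What the paper's approach buys is a short, self-contained computation that also exposes the structure of $St_n$ as $\tA_{n-1}\times\bbz$; what yours buys is staying inside the Coxeter presentation. The cost is that your closure step is only sketched, and it is where all the real work lies: besides the $i=0$ case you flag, note that pushing the $S_n$-factor of a $B_n$-coset decomposition past $t^a$ requires knowing that conjugation by $t=s_n\cdots s_0$ carries $\{s_1,\dots,s_{n-1}\}$ into $\{s_2,\dots,s_{n-1},g_0\}$ rather than back into $S_n$ --- this is precisely the role of relations (\ref{sa3})--(\ref{sa5}) in the appendix, and it must be verified for your particular choice of representatives. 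With that case analysis carried out, your argument is complete and not circular.
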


\begin{proof}
%The strategy of proof is as follows.
We shall proceed by a volume argument, using a sequence of technical observations. 

Consider the action of $\tC_n$ on $\bbr^n$ given by
$$
s_0(x_1,x_2,\ldots,x_n) := (-x_1,x_2,\ldots,x_n),
$$
$$
s_i(\ldots,x_i,x_{i+1},\ldots) := (\ldots, x_{i+1}, x_i, \ldots)\qquad(1\le i\le n-1),
$$
$$
s_n(x_1,\ldots,x_{n-1},x_n) := (x_1,\ldots,x_{n-1},2-x_n).
$$
It is well-known that this gives rise to a faithful $n$-dimensional linear
representation of $\tC_n$ (the natural action of $\tC_n$ on its root space).
The reflecting hyperplanes for the reflections $s_i$ are
$$
H_0 := \{(x_1,\ldots,x_n)\in\bbr^n\,|\,x_1 = 0\},
$$
$$
H_i := \{(x_1,\ldots,x_n)\in\bbr^n\,|\,x_i = x_{i+1}\}\qquad(1\le i\le n-1),
$$
$$
H_n := \{(x_1,\ldots,x_n)\in\bbr^n\,|\,x_n = 1\}.
$$
\begin{observation}\label{t.Fund_1}
A fundamental region for the above action of $\tC_n$ is the $n$-dimensional
simplex $Fund_1$ with vertices $v_0,\ldots,v_n\in \bbr^n$, where
$$
v_i := (\underbrace{0,\ldots,0}_{i},\underbrace{1,\ldots,1}_{n-i})\qquad(0\le i\le n).
$$
\end{observation}

$v_i$ is the intersection point of the reflecting hyperplanes for all generators
of $\tC_n$ except $s_i$.

The generators of $St_n$ are $s_1, \ldots, s_{n-1}$ acting as above, as well as
$$
g_0(x_1,x_2,\ldots,x_{n-1},x_n) := (x_n - 2,x_2,\ldots,x_{n-1},x_1 + 2)
$$
and
$$
g_n = (s_n \cdots s_0)^{n+4},\qquad
(s_n \cdots s_0)(x_1,x_2,\ldots,x_n) := (x_2,\ldots,x_n,x_1 + 2).
$$
Thus $g_0,s_1,\ldots,s_{n-1}$ are reflections, while $g_n$ is a cyclic permutation 
of coordinates combined with a translation.
The reflecting hyperplanes for $g_0,s_1,\ldots,s_{n-1}$ are
$$
H'_0 := \{(x_1,\ldots,x_n)\in\bbr^n\,|\,x_n = x_1 + 2\},
$$
$$
H_i := \{(x_1,\ldots,x_n)\in\bbr^n\,|\,x_i = x_{i+1}\}\qquad(1\le i\le n-1).
$$

\begin{observation}\label{t.Delta_n-1}
The subgroup $\langle g_0,s_1,\ldots,s_{n-1} \rangle$ of $\tC_n$ 
is an affine Weyl group of type $\tA_{n-1}$, and has
$$
H := \{(x_1,\ldots,x_n)\in\bbr^n\,|\,x_1 + \ldots + x_n =0\}
$$
as an invariant subspace.
A fundamental region for its action on $H$ is the $(n-1)$-dimensional simplex
$\Delta_{n-1}$ with vertices $w_0,\ldots,w_{n-1}\in H$, where
$$
w_i := \frac{2}{n}(\underbrace{i-n,\ldots,i-n}_{i},
                   \underbrace{i,\ldots,i}_{n-i})\qquad(0\le i\le n-1).
$$
\end{observation}

Now note that 
$$
(s_n \cdots s_0)(w_i) = e + w_{i-1}\qquad(0\le i\le n-1)
$$
with the index $i-1$ interpreted modulo $n$, and where 
$$
e = \frac{2}{n}(1,\ldots,1) \in H^{\perp}.
$$
Therefore $g_n$ acts as a linear transformation of $H$ 
preserving $\Delta_{n-1}$, combined with a translation by the vector 
$(n+4)e\in H^{\perp}$.

\begin{observation}\label{t.Fund_2}
A fundamental region for the action of $St_n$ on $\bbr^n$ is the prism
$$
Fund_2 = \Delta_{n-1} \times I := \{w+t(1,\ldots,1)\,|\,
                            w\in\Delta_{n-1},\,0\le t\le {2(n+4)/n}\}.
$$
\end{observation}

Return now to the action of $\tC_n$ on $CTFT(n)$. 
Each of the generators of $St_n$ clearly stabilizes the canonical colored
star triangulation $T_0$ defined immediately before Theorem~\ref{t.stabilizer}, 
so that $St_n$ is contained in the stabilizer of $T_0$ under the 
$\tC_n$-action on $CTFT(n)$. In order to show that $St_n$ is actually equal 
to this stabilizer, it suffices to show that both subgroups have the same 
finite index in $\tC_n$. 
The index of the stabilizer is the size of the orbit of
$T_0$, namely (by Proposition~\ref{t.action}) the number of colored
triangulations, $\#CTFT(n)$. The index of $St_n$ in $\tC_n$ is the
quotient of volumes $vol(Fund_2)/vol(Fund_1)$. By
%Corollaries~\ref{t.CTFTeq2TFT} and~\ref{t.nTFT} 
Corollary~\ref{t.nTFT} 
it thus suffices to show that
$$
vol(Fund_2)/vol(Fund_1) = (n+4)\cdot 2^{n}.
$$
This indeed follows from the following computations, 
using the well-known formula for the volume of a $k$-dimensional simplex $\Delta$
with vertices $v_0,\ldots,v_k\in \bbr^k$:
$$
vol(\Delta) = \frac{1}{k!} \cdot [\det(\langle v_i-v_0,v_j-v_0 \rangle)_{1\le i,j \le k}]^{1/2}, 
$$
where $\langle \cdot, \cdot \rangle$ is the standard inner product on $\bbr^k$.

\begin{claim}\label{t.vol_Fund_1}
$$
vol(Fund_1) = \frac{1}{n!} \cdot \det(A)^{1/2},
$$
where, following Observation~\ref{t.Fund_1},
$$
A := (a_{ij})\in\bbr^{n \times n},
$$
$$
a_{ij} := \langle v_i- v_0, v_j - v_0 \rangle
        = min(i,j)\qquad(1\le i,j \le n).
$$
\end{claim}

\begin{claim}\label{t.vol_Fund_2}
$$
vol(Fund_2) = [2(n+4)/n]\cdot n^{1/2} \cdot \frac{1}{(n-1)!} \cdot \det(B)^{1/2},
$$
where, following Observations~\ref{t.Delta_n-1} and~\ref{t.Fund_2},
$$
B := (b_{ij})\in\bbr^{(n-1) \times (n-1)},
$$
$$
b_{ij} := \langle w_i - w_0, w_j - w_0 \rangle
        = \frac{4}{n} \cdot min(i,j) \cdot min(n-i,n-j)\qquad(1\le i,j \le n-1).
$$
\end{claim}

\begin{proof}
For $i\le j$,
\begin{eqnarray*}
b_{ij} &=& \langle w_i - w_0, w_j - w_0 \rangle \\
       &=& \frac{4}{n^2} \cdot 
       [i \cdot (i-n) \cdot (j-n) + (j-i) \cdot i \cdot (j-n) + (n-j) \cdot i \cdot j] \\
       &=& \frac{4}{n^2}\cdot [i \cdot (j-n)^2 + (n-j) \cdot i \cdot j] \\
       &=& \frac{4}{n^2}\cdot i \cdot (n-j) \cdot n.
\end{eqnarray*}
\end{proof}

\begin{claim}\label{t.det}
$$
\det(A) = 1
$$
and
$$
\det(B) = {\left(\frac{4}{n}\right)}^{n-1} \cdot n^{n-2} = 4^{n-1} \cdot n^{-1}.
$$
\end{claim}

\begin{proof}
By elementary row operations (subtracting row $i-1$ from row $i$, for $2\le i\le n$),
the $n\times n$ matrix $A = (min(i,j))$ can be transformed into an upper triangular matrix 
with $1$-s in and over the main diagonal, so that $\det(A) = 1$.

By similar operations, the $(n-1)\times(n-1)$ matrix $(n/4) \cdot B = (min(i,j) \cdot min(n-i,n-j))$ 
can be transformed into the matrix $C=(c_{ij})$ with 
$$
c_{i,j} = \begin{cases}
1 \cdot (n-j),& \mbox{\rm if } i\le j;\\
j \cdot (-1),&  \mbox{\rm if } i > j.
\end{cases}
$$
Subtracting row $n-1$ from all the other rows we get 
the matrix $D=(d_{ij})$ with 
$$
d_{i,j} = \begin{cases}
n,&  \mbox{\rm if } 1\le i\le j\le n-2;\\
0,&  \mbox{\rm if } 1\le j < i\le n-2;\\
0,&  \mbox{\rm if } 1\le i\le n-2 \mbox{\rm\ and } j=n-1;\\
-j,&  \mbox{\rm if } i = n-1 \mbox{\rm\ and } 1\le j\le n-2;\\
1,&  \mbox{\rm if } i = j = n-1.
\end{cases}
$$
It follows that $\det(C) = \det(D) = n^{n-2}$ and $\det(B) = 4^{n-1} \cdot n^{-1}$.
\end{proof}

\begin{claim}
$$
vol(Fund_2)/vol(Fund_1) = (n+4)\cdot 2^{n}= \# CTFT(n).
$$
\end{claim}

\begin{proof}
By Claims~\ref{t.vol_Fund_1}, \ref{t.vol_Fund_2} and~\ref{t.det},
$$
vol(Fund_1) = \frac{1}{n!}
$$
while
%\begin{eqnarray*}
$$
vol(Fund_2) = 2(n+4)n^{-1/2} \cdot \frac{1}{(n-1)!} \cdot 2^{n-1}n^{-1/2} 
            = \frac{1}{n!} \cdot 2^n(n+4). 
$$
%\end{eqnarray*}
\end{proof}

This completes the proof of Theorem~\ref{t.stabilizer}.

\end{proof}

%\begin{corollary}
%The stabilizer $St_n$ is isomorphic to the group $\tilde
%A_n\otimes \bbz$.
%\end{corollary}

\subsection{Coset Representatives}

The stabilizer $St_n$ of the canonical colored star triangulation
$T_0$ is not a parabolic subgroup of $\tC_n$. However,
it will be shown that
%the shortest coset
a distinguished set of representatives of $St_n$ in $\tC_n$ forms an interval 
%with respect to
in the weak order on $\tC_n$.

\bigskip

For $0\le i\le n$ denote 
$a_i:=s_i s_{i-1} \cdots s_0 \in \tC_n$  and  
$b_i:=s_{n-i} s_{n-i+1} \cdots s_n \in \tC_n$.

\begin{proposition}\label{R_n}
Each of the sets
$$
R_n:=\{a_{0}^{\epsilon_0} a_{1}^{\epsilon_1}\cdots
a_{n-1}^{\epsilon_{n-1}} a_n^{\epsilon_n} :\ \epsilon_i\in\{0,1\}\
(0\le i<n) \ \text{\rm and }\ 0\le \epsilon_n< n+4 \}
$$
$$
R'_n:=\{b_{0}^{\epsilon_0} b_{1}^{\epsilon_{1}}\cdots
b_{n-1}^{\epsilon_{n-1}} b_n^{\epsilon_n} :\ \epsilon_i\in\{0,1\}\ 
(0\le i<n) \ \text{\rm and }\ 0\le \epsilon_n< n+4 \}
$$
forms a complete list of representatives of the left cosets of $St_n$ in $\tC_n$.
\end{proposition}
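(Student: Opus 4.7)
The plan is to translate the statement into a claim about the $\tC_n$-action on $CTFT(n)$ and verify it by an explicit computation in $\varphi$-coordinates. Since $\tC_n$ acts transitively on $CTFT(n)$ (Proposition~\ref{t.action}) with stabilizer of $T_0$ equal to $St_n$ (Theorem~\ref{t.stabilizer}), the orbit map $g \mapsto g T_0$ induces a bijection between $\tC_n/St_n$ and $CTFT(n)$; in particular $[\tC_n : St_n] = \#CTFT(n) = 2^n(n+4)$ by Corollary~\ref{t.nTFT}. Since $|R_n| = |R'_n| = 2^n(n+4)$ by construction, it is enough to show that the restrictions of the orbit map to $R_n$ and $R'_n$ are surjective onto $CTFT(n)$; bijectivity will then follow from cardinality, which is equivalent to the assertion that each set is a complete list of left-coset representatives.

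For $R_n$ I would establish, by downward induction on $i$, the closed form
\[
\varphi\bigl(a_i^{\epsilon_i} a_{i+1}^{\epsilon_{i+1}} \cdots a_n^{\epsilon_n} T_0\bigr) = \Bigl(\sum_{j=i}^{n} \epsilon_j \bmod (n+4),\ \underbrace{0,\ldots,0}_{i},\ \epsilon_i, \epsilon_{i+1}, \ldots, \epsilon_{n-1}\Bigr).
\]
The base case $i=n$ is obtained by iterating $a_n = s_n s_{n-1} \cdots s_0$ on $T_0 = \varphi^{-1}(0,\ldots,0)$: by Observation~\ref{action-on-vectors}, each pass increments $\varphi_0$ by $1$ and leaves all other coordinates zero, since the initial $s_0$ raises $\varphi_0$ and plants a $1$ at position $1$, the transpositions $s_1, \ldots, s_{n-1}$ transport this $1$ to position $n$, and the terminal $s_n$ extinguishes it. For the inductive step, the state produced by the induction hypothesis satisfies $\varphi_1 = \cdots = \varphi_{i+1} = 0$, so the leading $s_0$ of $a_i = s_i s_{i-1} \cdots s_0$ again raises $\varphi_0$ by $1$ and sets $\varphi_1 = 1$, and the transpositions $s_1, \ldots, s_i$ then slide this $1$ into position $i+1$ while leaving higher-index coordinates untouched. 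Setting $i=0$ produces an explicit bijection from $R_n$ onto $\bbz_{n+4} \times \bbz_2^n$: the exponents $\epsilon_0, \ldots, \epsilon_{n-1}$ are read off from $\varphi_1, \ldots, \varphi_n$, and then $\epsilon_n$ is uniquely determined modulo $n+4$ by $\varphi_0$.

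The argument for $R'_n$ is parallel and somewhat simpler, since none of $b_0, \ldots, b_{n-1}$ involves the generator $s_0$; only $b_n$ moves the first coordinate. A direct application of Observation~\ref{action-on-vectors} gives $b_n^{\epsilon_n} T_0 = \varphi^{-1}(-\epsilon_n \bmod (n+4), 0, \ldots, 0)$, and an analogous downward induction shows that each factor $b_i^{\epsilon_i}$ (with $\epsilon_i = 1$) uses $s_n$ to introduce a $1$ at position $n$ and then the transpositions $s_{n-1}, \ldots, s_{n-i}$ to migrate it leftward into position $n-i$ through a chain of zeros. The result is the bijection
\[
\varphi\bigl(b_0^{\epsilon_0} b_1^{\epsilon_1} \cdots b_n^{\epsilon_n} T_0\bigr) = \bigl(-\epsilon_n \bmod (n+4),\ \epsilon_{n-1}, \epsilon_{n-2}, \ldots, \epsilon_1, \epsilon_0\bigr),
\]
which again establishes that $R'_n$ is a complete set of representatives.

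The only delicate aspect of the plan is keeping the indices straight in the inductive step; once Observation~\ref{action-on-vectors} is invoked, the computation reduces to the routine tracking of a single ``$1$'' moving through an otherwise-zero string of coordinates.
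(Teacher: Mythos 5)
Your proposal is correct and follows essentially the same route as the paper: bound the cardinality by the index $[\tC_n:St_n]=\#CTFT(n)=(n+4)\cdot 2^n$, reduce to surjectivity of $r\mapsto rT_0$, and verify surjectivity by an explicit formula for $\varphi(rT_0)$ computed from Observation~\ref{action-on-vectors}. You merely supply the details (the downward induction tracking the single ``$1$'', and the explicit treatment of $R'_n$) that the paper compresses into ``Indeed, by Observation~\ref{action-on-vectors}\dots'' and ``the proof for $R'_n$ is similar''.
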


\begin{proof} 
Since $\# R_n\le (n+4)\cdot 2^{n}$, in order to prove
that $R_n$ forms a complete list of coset representatives it
suffices to prove that for every $T\in CTFT(n)$ there exists an
element $r\in R_n$ such that $r T_0=T$, where $T_0$ is the
canonical colored star triangulation. By
Observation~\ref{bijection-varphi}, it suffices to prove that for
every vector ${\rm v}=(v_0,\dots,v_n)\in \bbz_{n+4}\times
\bbz_2^n$ there exists $r\in R_n$ such that $\varphi(r T_0)={\rm
v}$. Indeed, by Observation~\ref{action-on-vectors},
$$
\varphi (a_0^{v_{n}} a_1^{v_{n-1}}\cdots a_{n-1}^{v_{1}}
a_n^{-v_0{\rm{mod}} (n+4)} T_0)=(v_0,\dots,v_n).
$$
%\begin{observation}
%For every element $a_{i_1} a_{i_2}\cdots a_{i_k}  a_n^m\in R_n$
%$$
%T:=a_{i_1} a_{i_2}\cdots a_{i_k}  a_n^m T_0\in CTFT(n)
%$$
%satisfies $\varphi(T)_0=-%\epsilon_n
%m\ ({\rm mod}\ n+4) $ and $\varphi(T)_i=1$ if and only if
%$i=n-i_t$ for some $1\le t\le k$.
%\end{observation}

The proof for $R'_n$ is similar.
\end{proof}

\medskip

Let $\ell(w)$ be the length of an element $w\in \tC_n$ with
respect to Coxeter generating set,  that is,
$$
\ell(w):=\min\{\ell:\ w = s_{i_1} s_{i_2} \cdots s_{i_\ell},\
 s_{i_j}\in\{s_0,\dots,s_n\}\ (\forall j) \}.
$$

\begin{claim}\label{length-Rn}
For every $r=a_0^{\epsilon_0}\cdots a_n^{\epsilon_n}\in R_n$
$$
\ell(r)=\sum\limits_{j=0}^n (j+1) \epsilon_j=\sum\limits_{j=0}^n
\sum\limits_{i=j}^n \epsilon_i.
$$
\end{claim}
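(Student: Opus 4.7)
The inequality $\ell(r) \le \sum_{j=0}^n (j+1)\epsilon_j$ is immediate from the explicit word
\[
r = (s_0)^{\epsilon_0}(s_1 s_0)^{\epsilon_1}\cdots(s_n s_{n-1}\cdots s_0)^{\epsilon_n},
\]
which has exactly $\sum_{j=0}^n (j+1)\epsilon_j$ letters in the Coxeter generators of $\tC_n$; the equality of the two sums in the claim is a routine change in the order of summation.

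For the matching lower bound I would show that this explicit word is reduced, by invoking the standard Coxeter-theoretic criterion: a word $s_{i_1} s_{i_2}\cdots s_{i_N}$ is reduced if and only if, for each $k$, the partial product $s_{i_1}\cdots s_{i_{k-1}}$ sends the simple root $\alpha_{i_k}$ of $s_{i_k}$ to a positive root (equivalently, right-multiplication by $s_{i_k}$ strictly increases the length). I would verify this condition letter by letter, using the faithful geometric representation of $\tC_n$ on $\bbr^n$ from the proof of Theorem~\ref{t.stabilizer} together with the explicit closed-form descriptions of the $a_j$ as operators on $\bbr^n$.

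The verification splits naturally along the blocks $a_j^{\epsilon_j}$. For $j<n$, each $a_j$ acts as a signed cyclic permutation of the first $j+1$ coordinates (sign-flipping the coordinate that wraps to position $j+1$), so within the $a_j$-block the relevant prefixes are compositions of such signed cyclic permutations, and their action on the simple roots $\alpha_0=e_1$ and $\alpha_i=e_i-e_{i+1}$ can be read off directly. The main technical obstacle, and the reason the bound $\epsilon_n<n+4$ appears, lies in the final block $a_n^{\epsilon_n}$: the element $a_n$ combines a cyclic shift with the translation by $e=(2/n)(1,\ldots,1)\in H^{\perp}$ identified in the proof of Theorem~\ref{t.stabilizer}, so repeated applications of $a_n$ accumulate translation. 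The value $\epsilon_n\le n+3$ is precisely the largest for which no such accumulated translation turns a previously positive affine root into a negative one across any prefix of the word. Once this positivity check is completed for every letter, the word is reduced and the length formula follows.
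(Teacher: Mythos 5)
Your overall strategy---write down the obvious word for $r$ and then prove it is reduced via the root-positivity criterion in the geometric representation---is legitimate and genuinely different from the paper's argument. The paper instead reads $r=a_0^{\epsilon_0}\cdots a_n^{\epsilon_n}$ along the chain of parabolic subgroups $\langle s_0\rangle\subset\langle s_0,s_1\rangle\subset\cdots\subset\tC_n$, observes that each block $a_{i+1}^{\epsilon_{i+1}}$ is a minimal-length representative of a right coset of $\langle s_0,\dots,s_i\rangle$ in $\langle s_0,\dots,s_{i+1}\rangle$, and concludes by the length-additivity property of parabolic quotients. However, as written your proposal has a genuine gap: everything after ``I would show'' is a plan rather than a proof, and the one step that carries the entire content of the claim is never performed. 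Concretely, the only nontrivial point is that $\ell(a_n^{\epsilon_n})=(n+1)\epsilon_n$ for all $0\le\epsilon_n\le n+3$, i.e.\ that no prefix inside the final block inverts a positive root. You assert that $\epsilon_n\le n+3$ is ``precisely the largest'' exponent for which this holds, but that is exactly the statement to be proved; nothing in your setup delivers it, and it is where the specific affine geometry of $\tC_n$ (the accumulation of the translation part of $a_n$) actually has to be confronted.

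There is also a technical omission that prevents the check from even being well posed: to apply the positivity criterion in an affine Weyl group you must work with the \emph{affine} root system, whose roots carry a constant term, since $s_n$ is the affine reflection $x_n\mapsto 2-x_n$ and its simple root is not a vector of $\bbr^n$. You list only the linear simple roots $e_1$ and $e_i-e_{i+1}$ and never specify the simple affine root attached to $s_n$, nor the set of positive affine roots, so the ``letter by letter'' verification is undefined for every occurrence of $s_n$ and for every prefix whose action has a nontrivial translation part. Supplying the affine root system and then actually carrying out the positivity check for the block $a_n^{\epsilon_n}$ would close the gap; alternatively, the paper's coset route reduces the whole claim to the single (and easier) statement that $id,a_n,\dots,a_n^{n+3}$ are distinct minimal right-coset representatives of $\langle s_0,\dots,s_{n-1}\rangle$ in $\tC_n$, together with $\ell(a_n)=n+1$.
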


\begin{proof}
Notice that for every $0\le i <n$, $a_{i+1}^{\epsilon_{i+1}}$ is a
representative of shortest length of a right coset of the
parabolic subgroup $\langle s_0,\dots,s_i\rangle$ in $\langle
s_0,\dots,s_i, s_{i+1}\rangle$. The Claim follows, by induction,
from the length-additivity property of parabolic subgroups in
Coxeter groups~\cite[\S 1.10]{Hum} \cite[\S 2.4]{BB}.
\end{proof}

%Let $\prec$ be the weak order on $\widetilde{C}_n$.

\medskip

The following lemma plays a key role in understanding the
structure of $R_n$ (Proposition~\ref{lattice}) and of the colored
flip-graph (Propsosition~\ref{graph-description}).

\begin{lemma}\label{graph-description0}
For every $r=a_0^{\epsilon_0}\cdots a_n^{\epsilon_n}\in R_n$
%$r=a_{i_1}\cdots a_{i_k} a_n^t\in R_n$,
%$i_1<i_2<\cdots<i_k<n$,
 and a Coxeter
generator $s_i$ of $\widetilde{C}_n$ exactly one of the following
holds:
\begin{itemize}
\item[1.] $s_i r\in R_n$.

\item[2.] $s_i r\in r St_n$.

\item[3.] \begin{itemize}\item[(i)] $i=n$, $\epsilon_{n-1}=1$ and
$\epsilon_n=n+3$. Then $s_n r\in a_0^{\epsilon_0}\cdots
a_{n-2}^{\epsilon_{n-2}} St_n$. \item[(ii)]  $i=n$,
$\epsilon_{n-1}=0$ and $\epsilon_n=0$. Then $s_n r\in
a_0^{\epsilon_0}\cdots a_{n-2}^{\epsilon_{n-2}} a_{n-1}
a_n^{n+3}St_n$.

\end{itemize}
\end{itemize}
%
%\begin{itemize}
%\item[1.] $i=0<i_1$; then $s_0 (a_{i_1}\cdots  a_{i_k}
%a_n^t)=a_0a_{i_1}\cdots a_{i_k} a_n^t$.
%
%\item[2.] $i_j+1=i<i_{j+1}$ for some $1\le j\le k$; then $s_i
%(a_{i_1}\cdots a_{i_j}\cdots a_{i_k} a_n^t)=a_{i_1}\cdots
%a_{i_j+1}\cdots a_{i_k} a_n^t$.
%
%\item[3.]  $i_k+1=i=n$; %and $t<n+3$;
%then $s_n (a_{i_1}\cdots a_{i_k} a_n^t)=a_{i_1}\cdots
%a_{i_k} a_n^{t+1\, ({\rm mod}\, n+4)}$. %\item[5.]
%
%\item[4.] Otherwise, $s_i r\in r St_n$.
%\end{itemize}
\end{lemma}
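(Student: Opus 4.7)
The plan is a case analysis on the Coxeter generator $s_i$, built on three identities in $\tC_n$: (a) $s_i a_k = a_k s_i$ whenever $k \le i-2$, immediate from the commutation relations~(\ref{relation2}); (b) the reduction $s_i a_i = a_{i-1}$, a consequence of $s_i^2 = 1$; and (c) its mirror $s_i a_{i-1} = a_i$, immediate from the definition $a_i = s_i s_{i-1} \cdots s_0$. All three are straightforward consequences of the defining relations.

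For $i=0$ the identity $a_0 = s_0$ means that left multiplication by $s_0$ simply toggles $\epsilon_0$, so $s_0 r \in R_n$ (case~1). For $0 < i < n$ identity~(a) lets us write
\[
s_i r = a_0^{\epsilon_0} \cdots a_{i-2}^{\epsilon_{i-2}} \cdot \bigl(s_i\, a_{i-1}^{\epsilon_{i-1}} a_i^{\epsilon_i}\bigr) \cdot a_{i+1}^{\epsilon_{i+1}} \cdots a_n^{\epsilon_n},
\]
reducing the problem to simplifying the middle block. When $\epsilon_{i-1} = \epsilon_i$, the two corresponding coordinates of $\varphi(rT_0)$ are equal, so by Observation~\ref{action-on-vectors} the generator $s_i$ fixes $rT_0$, placing us in case~2. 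Otherwise identities~(b) and~(c) rewrite the middle block as $a_{i-1}^{\epsilon_i} a_i^{\epsilon_{i-1}}$, placing $s_i r$ in $R_n$ (case~1).

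The case $i = n$ proceeds analogously after commuting $s_n$ past $a_0^{\epsilon_0} \cdots a_{n-2}^{\epsilon_{n-2}}$, but the boundary values of $\epsilon_n \in \{0,\ldots,n+3\}$ require extra care. For $\epsilon_{n-1} = 0,\ \epsilon_n \ge 1$, identity~(b) gives $s_n a_n = a_{n-1}$, so $s_n r \in R_n$ (case~1); symmetrically for $\epsilon_{n-1} = 1,\ \epsilon_n < n+3$, identity~(c) gives $s_n a_{n-1} = a_n$, and again $s_n r \in R_n$ (case~1). The two boundary situations invoke the relation $g_n = a_n^{n+4} \in St_n$: when $\epsilon_{n-1} = 1$ and $\epsilon_n = n+3$, the middle block collapses to $a_n \cdot a_n^{n+3} = g_n$, yielding case~3(i); when $\epsilon_{n-1} = \epsilon_n = 0$, the middle block is just $s_n$, which the telescoping identity $s_n = a_{n-1} a_n^{-1}$ (from iterated cancellations $s_k^2 = 1$ in the product $s_{n-1} \cdots s_0 \cdot s_0 s_1 \cdots s_n$) rewrites as $a_{n-1} a_n^{n+3}\, g_n^{-1}$, yielding case~3(ii). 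The hard part is recognizing this last telescoping identity, which is precisely the ingredient that creates case~3(ii); exhaustiveness and mutual exclusivity of the three cases follow from Proposition~\ref{R_n}, since each coset $s_i r \cdot St_n$ has a unique $R_n$-representative that the explicit simplifications above identify in every instance.
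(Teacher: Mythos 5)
Your handling of the cases that land in $R_n$ (items 1 and 3 of the lemma) is correct and essentially the paper's own argument: the identities $s_i a_j = a_j s_i$ for $j\le i-2$, $s_i a_i = a_{i-1}$, $s_i a_{i-1}=a_i$, together with $a_n^{n+4}=g_n\in St_n$ and the telescoping $s_n = a_{n-1}a_n^{-1}$, reproduce the paper's cases (a), (b) and (c), and your derivation of item 3(ii) is if anything cleaner than the paper's. The appeal to Proposition~\ref{R_n} for mutual exclusivity of the three outcomes is also fine.

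The gap is in the case $0<i<n$, $\epsilon_{i-1}=\epsilon_i$, which is precisely the hard part of the lemma (the paper's cases (d) and (e), occupying most of its proof). You assert that ``the two corresponding coordinates of $\varphi(rT_0)$ are equal,'' hence $s_i$ fixes $rT_0$ and $s_i r\in rSt_n$. The inference from ``$s_i$ fixes $rT_0$'' to ``$s_i r\in rSt_n$'' is legitimate (it uses Theorem~\ref{t.stabilizer}, whose volume-argument proof is independent of this lemma), and the criterion $s_i(T)=T\iff\varphi(T)_i=\varphi(T)_{i+1}$ is available. But the claim that the coordinates of $\varphi(rT_0)$ in positions $i$ and $i+1$ are exactly $\epsilon_{i-1}$ and $\epsilon_i$ is itself the content that needs proof, and you give no argument for it. It requires an explicit closed formula for $\varphi(a_0^{\epsilon_0}\cdots a_n^{\epsilon_n}T_0)$, obtained by induction from Observation~\ref{action-on-vectors}, and the index bookkeeping is genuinely delicate: the paper's own displayed formula in the proof of Proposition~\ref{R_n} reads $\varphi(a_0^{v_n}a_1^{v_{n-1}}\cdots a_{n-1}^{v_1}a_n^{-v_0\bmod (n+4)}T_0)=(v_0,\dots,v_n)$, which would make positions $i$ and $i+1$ of $\varphi(rT_0)$ equal to $\epsilon_{n-i}$ and $\epsilon_{n-i-1}$ rather than $\epsilon_{i-1}$ and $\epsilon_i$; so either that formula or your claim has the indices reversed, and resolving which is the actual work here. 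The paper avoids this entirely by staying algebraic: for $\epsilon_{i-1}=\epsilon_i=1$ it uses $s_i a_{i-1}a_i=a_{i-1}a_i s_1$, and for $\epsilon_{i-1}=\epsilon_i=0$ it pushes $s_i$ rightward through the remaining factors via $s_i a_j=a_j s_{i+1}$ (the index climbing by one at each $a_j$ with $\epsilon_j=1$, $i<j<n$) until it exits as some $s_m$ with $0<m<n$ or, via $s_{n-1}a_n=a_n g_0$ and $a_n s_1=g_0 a_n$, as an element of $St_n$. To complete your proof you must either carry out that commutation or actually establish the formula for $\varphi(rT_0)$ you are implicitly invoking.
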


\begin{corollary}\label{left-descents}
For every $s_i\in S$ and $r=a_0^{\epsilon_0}\cdots
a_n^{\epsilon_n}\in R_n$
$$
\ell(s_i r)<\ell(r) \Longleftrightarrow \epsilon_{i-1}=0\
\text{\rm and}\ \epsilon_i>0,
$$
where $\epsilon_0:=0$.
\end{corollary}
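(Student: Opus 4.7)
For the sufficiency direction ($\Leftarrow$), assuming $\epsilon_{i-1}=0$ and $\epsilon_i>0$, I would produce a reduced expression for $r$ beginning with $s_i$, which implies $\ell(s_i r)=\ell(r)-1<\ell(r)$. Concretely, the canonical expression $r = a_0^{\epsilon_0}\cdots a_n^{\epsilon_n}$, reduced by Claim~\ref{length-Rn}, has no $a_{i-1}$-factor (since $\epsilon_{i-1}=0$) and contains $a_i^{\epsilon_i}$ whose first expanded letter is $s_i$ (since $a_i = s_i s_{i-1}\cdots s_0$ and $\epsilon_i\ge 1$). Because $s_i$ commutes with every $s_j$ for $j\le i-2$, it commutes with each preceding factor $a_j^{\epsilon_j}$, and the leading $s_i$ of $a_i^{\epsilon_i}$ slides to the very front of the expression.

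For the converse ($\Rightarrow$) I would prove the contrapositive: if the condition fails, then $\ell(s_i r)=\ell(r)+1$. Since $\ell(s_i r)-\ell(r)=\pm 1$ in any Coxeter group, it suffices to rule out the $-1$ alternative. I split on $(\epsilon_{i-1},\epsilon_i)$. In the easiest failing sub-case $(1,0)$, commuting $s_i$ through the prefix $a_0^{\epsilon_0}\cdots a_{i-2}^{\epsilon_{i-2}}$ and applying the identity $s_i\cdot a_{i-1}=a_i$ places $s_i r$ directly in $R_n$-form with $\epsilon'_{i-1}=0$, $\epsilon'_i=1$, and all other $\epsilon'_j=\epsilon_j$; Claim~\ref{length-Rn} then reads off $\ell(s_i r)=\ell(r)+1$.

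In the remaining failing sub-cases $(1,>0)$ and $(0,0)$, $s_i r$ need not lie in $R_n$. My plan is to use Lemma~\ref{graph-description0} to identify the coset of $s_i r$ modulo $St_n$ together with its canonical $R_n$-representative $r'$, write $s_i r = r'\cdot h$ with $h\in St_n$, and then verify via length-additivity along the parabolic filtration $\langle s_0\rangle\subset\langle s_0,s_1\rangle\subset\cdots\subset\tC_n$ (the same technology driving Claim~\ref{length-Rn}) that the resulting expression for $s_i r$ is reduced with length $\ell(r)+1$. The main technical obstacle is sub-case $(1,>0)$ with $i<n$, where the natural rewrite $s_i\cdot a_{i-1}\cdot a_i = a_i^2$ produces a non-canonical factor $a_i^2$ inside the type-$B_{i+1}$ parabolic $\langle s_0,\dots,s_i\rangle$; to handle this I expect to need the identity $\ell(a_i^2)=2(i+1)$, verifiable by a short signed-permutation computation, after which length-additivity across the decomposition $s_i r = (a_0^{\epsilon_0}\cdots a_{i-2}^{\epsilon_{i-2}})\cdot a_i^2\cdot(a_{i+1}^{\epsilon_{i+1}}\cdots a_n^{\epsilon_n})$ closes the case.
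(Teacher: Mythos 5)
Your sufficiency direction and the sub-case $(\epsilon_{i-1},\epsilon_i)=(1,0)$ coincide in substance with the paper's proof: there too $s_ir$ is rewritten in canonical $R_n$-form (via $s_ia_i=a_{i-1}$, resp.\ $s_ia_{i-1}=a_i$) and the length change is read off from Claim~\ref{length-Rn}. Where you genuinely diverge is in the remaining sub-cases. The paper handles these by quoting Lemma~\ref{graph-description0} to get $s_ir=rg$ with $g\in St_n$ and then asserting $\ell(s_ir)=\ell(r)+\ell(g)$ by ``length-additivity''; since $St_n$ is not parabolic, and since in the $(0,0)$ sub-case the element $g$ can be $g_0$ with $\ell(g_0)=2n+1$ while $\ell(s_ir)-\ell(r)=\pm1$, that one-line appeal is itself shaky. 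Your alternative --- compute directly with reduced words and use length-additivity only for the genuine parabolic chain $\langle s_0\rangle\subset\langle s_0,s_1\rangle\subset\cdots$ --- is a different and arguably more careful route. For the $(1,>0)$, $i<n$ case it does work, with one addendum: besides $\ell(a_i^2)=2(i+1)$ you also need $\ell(u\cdot a_i^2)=\ell(u)+\ell(a_i^2)$ for the prefix $u=a_0^{\epsilon_0}\cdots a_{i-2}^{\epsilon_{i-2}}$, i.e.\ that $a_i^2$ is the minimal-length element of its left $\langle s_0,\dots,s_{i-2}\rangle$-coset. This is true (a short root computation in $B_{i+1}$ shows $a_i^{-2}\alpha_j>0$ for $j\le i-2$), but note it fails for $j=i-1$, so it is not a consequence of the length formula alone and must be checked.

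The one step that would fail if executed as written is your generic recipe for the remaining cases: ``write $s_ir=r'h$ with $h\in St_n$ and verify the resulting expression is reduced with length $\ell(r)+1$.'' In the $(0,0)$ sub-case Lemma~\ref{graph-description0} gives $r'=r$ and $h\in\{s_m,g_0\}$; when $h=g_0$ the word $r\cdot g_0$ has $\ell(r)+2n+1$ letters and is far from reduced, so nothing about $\ell(s_ir)$ can be read off from that decomposition. The repair is exactly the move you already make for $(1,>0)$: bypass the coset decomposition and work with the word itself, $s_ir=u\cdot s_i\cdot v$ with $u\in\langle s_0,\dots,s_{i-2}\rangle$ and $v=a_{i+1}^{\epsilon_{i+1}}\cdots a_n^{\epsilon_n}$. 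Then $\ell(us_i)=\ell(u)+1$ because the right descents of $u$ lie in $\{s_0,\dots,s_{i-2}\}$, and $v$ multiplies additively by the same coset-minimality of the $a_j$'s that underlies Claim~\ref{length-Rn}, giving $\ell(s_ir)=\ell(r)+1$. Your case split should also mention the $i=n$ boundary instances ($\epsilon_{n-1}=1$ with $\epsilon_n=n+3$, where $s_nr=u\,a_n^{n+4}$ and one needs $\ell(a_n^{n+4})=(n+1)(n+4)$ together with the analogous minimality), but the same technique covers them. With these repairs your argument is complete and, in the hard cases, more self-contained than the paper's.
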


For proofs of Lemma~\ref{graph-description0} and
Corollary~\ref{left-descents} see Appendix
(Section~\ref{appendix1}).

\bigskip

\noindent Denote
$$
w_o:= a_0a_1\cdots a_{n-1}a_n^{n+3}
$$
the longest element in $R_n$.

\begin{proposition}\label{lattice}
$R_n$ is a self-dual %unimodal
lower interval $\{w\in \widetilde{C}_n:\ id\le w\le w_o\}$
%$[id,a_0a_1\cdots a_{n-1}a_n^{n+3}]$
in the left weak order on $\widetilde{C}_n$;
%poset $(\widetilde{C}_n, \prec)$;
%all of them of same length $\frac{(n+1)(n+4)}{2}$,
hence it forms a graded lattice. % with respect to the weak order on $\widetilde{C}_n$.
\end{proposition}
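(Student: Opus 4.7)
My plan is to establish $R_n = [id, w_o]_L$ in the left weak order on $\tC_n$ by proving both inclusions, and then to deduce the lattice and self-duality properties from structural results about Coxeter groups.

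To show $[id, w_o]_L \subseteq R_n$, I first observe from Claim~\ref{length-Rn} that $w_o$ is the unique element of maximum length in $R_n$, with $\ell(w_o) = \binom{n+1}{2} + (n+1)(n+3)$. I then prove $R_n$ is closed under left-descent reductions: given $r \in R_n$ with $\ell(s_i r) < \ell(r)$, Corollary~\ref{left-descents} forces $\epsilon_{i-1}=0$ and $\epsilon_i>0$. Comparing with Lemma~\ref{graph-description0}, cases 3(i) and 3(ii) are ruled out by their explicit exponent hypotheses (respectively $\epsilon_{n-1}=1$ and $\epsilon_n=0$, both incompatible with the descent conditions at $i=n$), and case 2 is ruled out by noting that $s_i r \in r\,St_n$ would mean $s_i$ fixes $rT_0$, i.e.\ a coordinate identity $\varphi(rT_0)_i = \varphi(rT_0)_{i+1}$ which can be checked to fail precisely when the descent condition holds. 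Thus case 1 applies and $s_i r \in R_n$. Iterating from $w_o$ downward through descents keeps us inside $R_n$ and reaches every element of $[id, w_o]_L$.

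For $R_n \subseteq [id, w_o]_L$, I argue by descending induction on $\ell(w_o) - \ell(r)$ that each $r \in R_n$ satisfies $r \le_L w_o$. For $r \ne w_o$, I locate the smallest index $k$ where $\epsilon_k$ is strictly less than the corresponding exponent of $w_o$ (namely $1$ for $k<n$ and $n+3$ for $k=n$), then use a dual analysis of Lemma~\ref{graph-description0} to identify an $s_i$ with $s_i r \in R_n$ and $\ell(s_i r)=\ell(r)+1$. Iterating produces a left-weak-order chain from $r$ up to $w_o$ lying entirely in $R_n$.

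The main obstacle is this ascent construction: left multiplication by a generator can push the product out of canonical $R_n$-form, so showing $s_i r \in R_n$ requires tracking the braid-rewriting of $s_i \cdot a_0^{\epsilon_0} \cdots a_n^{\epsilon_n}$ into the canonical form---this likely requires an auxiliary lemma paralleling Lemma~\ref{graph-description0} but from the ascent side. Once $R_n = [id, w_o]_L$ is established, the graded-lattice property follows from Bj\"orner's classical theorem that every lower interval in the weak order on a Coxeter group is a complete ortholattice. For self-duality, the length-complementing map $r \mapsto w_o r^{-1}$ is well-defined on $[id, w_o]_L$ because $r \le_L w_o$ gives $\ell(w_o r^{-1}) = \ell(w_o) - \ell(r)$; I would then verify that this map is an order-reversing involution on the interval, completing the proof.
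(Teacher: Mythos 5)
Your treatment of the interval claim is sound and, modulo presentation, is the paper's own argument: Corollary~\ref{left-descents} together with Case (a) of the proof of Lemma~\ref{graph-description0} gives closure of $R_n$ under length-decreasing left multiplication by generators, whence $\{w:\ id\le w\le w_o\}\subseteq R_n$; the reverse inclusion needs the ascent chains you describe. Your worry that the ascent step requires a new auxiliary lemma is unfounded: Cases (b) and (c) of the proof of Lemma~\ref{graph-description0} (equivalently, the ``only if'' direction of Corollary~\ref{left-descents}) already show that if $i$ is minimal with $\epsilon_i=0$ (or $i=n$ when $\epsilon_0=\cdots=\epsilon_{n-1}=1$ and $\epsilon_n<n+3$), then $s_ir\in R_n$ with $\ell(s_ir)=\ell(r)+1$, and iterating reaches $w_o$. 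The deduction ``lower interval $\Rightarrow$ graded lattice'' via the meet-semilattice property of weak order is fine, though ``complete ortholattice'' is the statement for the full weak order of a finite group, not for a lower interval.

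The genuine gap is the self-duality step. The map $r\mapsto w_or^{-1}$ does satisfy $\ell(w_or^{-1})=\ell(w_o)-\ell(r)$ once $r\le w_o$ is known, but it need not map the interval $\{w:\ id\le w\le w_o\}$ into itself, and lower intervals in the weak order are not self-dual in general, so the verification you defer is not routine and may simply fail for this map. Concretely, in the group $\langle s_1,s_2\rangle$ of type $A_2$ with $w=s_1s_2$, the lower interval in the left weak order is $\{id,\ s_2,\ s_1s_2\}$, yet $w\,s_2^{-1}=s_1$ lies outside it. What makes $R_n$ self-dual is the specific identity the paper invokes, namely $r w_o=a_0^{1-\epsilon_0}\cdots a_{n-1}^{1-\epsilon_{n-1}}a_n^{n+3-\epsilon_n}$: right multiplication by $w_o$ complements the exponent vector, hence preserves $R_n$, complements length by Claim~\ref{length-Rn}, and reverses the order because it interchanges the descent and ascent conditions of Corollary~\ref{left-descents} (equivalently, it reverses the dominance order of Lemma~\ref{dominance}). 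You would need to prove this identity, or an equally explicit substitute, rather than rely on a generic complementation map.
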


\begin{proof}
By Corollary~\ref{left-descents}, for every $r\in R_n$ and $s_i\in
S$, $\ell(s_i r)<\ell(r)$ implies that
$r=%a_0^{\epsilon_0}
\cdots a_{i-1}^0 a_i^{\epsilon_i}\cdots $
($\epsilon_i>0$) for some $0\le i \le n$, thus
$$
s_i r=%a_0^{\epsilon_0}
\cdots a_{i-1} a_i^{\epsilon_i-1}\cdots \in
R_n.
$$
It follows that $R_n$ is an interval in the left weak order.

Self-duality follows from the identity % For every element
%$r=a_{0}^{\epsilon_0} a_{1}^{\epsilon_1}\cdots
%a_{n-1}^{\epsilon_{n-1}} a_n^m\in R_n$
$$
rw_0=a_{0}^{1-\epsilon_0} a_{1}^{1-\epsilon_1}\cdots
a_{n-1}^{1-\epsilon_{n-1}} a_n^{n+3-\epsilon_n} %\qquad (\forall r\in R_n)
$$
for all $r=a_0^{\epsilon_0}\cdots a_n^{\epsilon_n}\in R_n$.

%which implies self-duality.
%\begin{fact}
%For $ 0\le i_1< i_2<\cdots<i_k <n$ and  $0\le m< n+4$, the Coxeter
%length of $a_{i_1} a_{i_2}\cdots a_{i_k}  a_n^m$ in $\widetilde{C}_n$
%is $\ell(a_{i_1} a_{i_2}\cdots a_{i_k}  a_n^m)=
%\sum\limits_{t=1}^k (i_t+1) +m (n+1)$.
%Same for $b_{i_1}
%b_{i_2}\cdots b_{i_k}  b_n^m$.
%\end{fact}
\end{proof}

\begin{remark}
Since $R_n$ is an interval in the left weak order the rank of an
element is given by its Coxeter length. Thus the rank generating
function is
$$
(1+q)(1+q^2)\cdots
(1+q^n)(1+q^{n+1}+q^{2(n+1)}+\cdots+q^{(n+3)(n+1)}),
$$
(not necessarily unimodal).
\end{remark}

\begin{lemma}\label{dominance}
For every pair of elements in $R_n$
$$
a_0^{\epsilon_0}\cdots a_n^{\epsilon_n}<a_0^{\delta_0}\cdots
a_n^{\delta_n}
$$
in the left weak order if and only if
$$
(\epsilon_n,\dots,\epsilon_0)<(\delta_n,\dots,\delta_0)
$$
in the dominance order; i.e., $\sum\limits_{i=k}^n
\epsilon_i<\sum\limits_{i=k}^n \delta_i$ for all $0\le k\le n$.
\end{lemma}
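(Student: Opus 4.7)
The plan is to reformulate both sides in terms of the \emph{tail-sum vector}
$T(r) := (T_0(r),\dots,T_n(r))$, where $T_k(r) := \sum_{i=k}^n \epsilon_i$
for $r = a_0^{\epsilon_0}\cdots a_n^{\epsilon_n} \in R_n$, and then show that
the left weak order on $R_n$ and the dominance order on these tail-sum
vectors share the same cover relations (hence coincide).

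First I would identify the covers of the weak order on $R_n$.
By Proposition~\ref{lattice} and Corollary~\ref{left-descents}, a covering
pair $r'' \lessdot r'$ in $R_n$ has the form $r'' = s_i r'$, where $s_i$ is a
left descent of $r'$: $\delta_{i-1}=0$ (vacuous at $i=0$) and $\delta_i>0$.
The formula from the proof of Proposition~\ref{lattice} gives
$s_i r' = a_0^{\delta_0}\cdots a_{i-1}^{1} a_i^{\delta_i-1}\cdots a_n^{\delta_n}$,
so the only effect on the tail-sum vector is $T_i(r'') = T_i(r')-1$, with all
other $T_j$ unchanged. The forward direction of the lemma is then immediate:
a saturated chain from $r'$ down to $r$ in the weak order is a sequence of
such moves, so $T_k(r)\le T_k(r')$ for every $k$, with strict inequality at
some $k$ whenever $r<r'$.

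For the converse I would induct on
$\ell(r')-\ell(r) = \sum_{k=0}^n (T_k(r')-T_k(r))$, which is
well-defined by Claim~\ref{length-Rn}. Assuming $r<r'$ in dominance, it
suffices to exhibit a left descent $s_i$ of $r'$ with $T_i(r')>T_i(r)$; the
resulting cover $r'':= s_i r'$ then satisfies $r\le r''$ in dominance, and
induction finishes. To locate such an $i$, let $k_0$ be the largest index
with $T_{k_0}(r')>T_{k_0}(r)$. Then $\delta_j=\epsilon_j$ for $j>k_0$, and
consequently $\delta_{k_0}>\epsilon_{k_0}\ge 0$, so $\delta_{k_0}>0$.
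Starting at $j=k_0$, descend: if $\delta_{j-1}=0$ (or $j=0$), take $i=j$.
Otherwise $\delta_{j-1}=1$, and a two-case check on
$\epsilon_{j-1}\in\{0,1\}$ yields $T_{j-1}(r')>T_{j-1}(r)$ as well, so the
procedure can be repeated at $j-1$ (where $\delta_{j-1}=1>0$ now supplies
the required positivity of $\delta_{\cdot}$).

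The main obstacle will be the bookkeeping in this descent: one must verify
that strict inequality of tail sums propagates from $j$ to $j-1$ precisely
when the left-descent condition fails at the current $j$, so the search
cannot stall. This reduces to the short case split above, together with the
boundary observation that if the descent runs all the way to $j=0$, then the
chain has forced $\delta_0=1>0$, making the $s_0$-descent condition
automatic. Once this is established, the two orders on $R_n$ have identical
Hasse diagrams and therefore coincide, yielding the lemma.
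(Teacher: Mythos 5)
Your proof is correct and follows essentially the same route as the paper's: both establish that the covering relations of the left weak order on $R_n$ (read off from Corollary~\ref{left-descents}) coincide with decrementing a single tail sum, and then induct along chains. The paper compresses this into two sentences; your write-up supplies the nontrivial detail for the converse direction (locating a left descent of the larger element at which the tail sums still differ strictly), which is precisely the step the paper leaves implicit.
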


\begin{proof}
By Corollary~\ref{left-descents}, the lemma holds for the covering
relation. Proceed by induction on the length of the chain.
\end{proof}

For every pair of elements $r,s\in R_n$ denote by $r\wedge s$
their join and by $r\vee s$ their meet in the weak order on
$\widetilde{C}_n$. Lemma~\ref{dominance} implies

\begin{corollary}\label{join-meet}
For every pair of elements in $R_n$
$$
a_0^{\epsilon_0}\cdots a_n^{\epsilon_n}\ \wedge \
a_0^{\delta_0}\cdots a_n^{\delta_n}= a_0^{\alpha_0}\cdots
a_n^{\alpha_n},
$$
where
$$
\alpha_k:= \min\{\sum\limits_{i=k}^n
\epsilon_i,\sum\limits_{i=k}^n
\delta_i\}-\min\{\sum\limits_{i=k+1}^n
\epsilon_i,\sum\limits_{i=k+1}^n \delta_i\}  \qquad (0\le k\le n),
$$
and
$$
a_0^{\epsilon_0}\cdots a_n^{\epsilon_n}\ \vee \
a_0^{\delta_0}\cdots a_n^{\delta_n}= a_0^{\beta_0}\cdots
a_n^{\beta_n},
$$
where
$$
\beta_k:= \max\{\sum\limits_{i=k}^n \epsilon_i,\sum\limits_{i=k}^n
\delta_i\}-\max\{\sum\limits_{i=k+1}^n
\epsilon_i,\sum\limits_{i=k+1}^n \delta_i\} \qquad (0\le k\le n).
$$
\end{corollary}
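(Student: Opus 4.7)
The strategy is to transport the problem via Lemma~\ref{dominance} to the componentwise order on tuples of partial sums, where the lattice operations are given by termwise $\min$ and $\max$. Set $E_k := \sum_{i=k}^n \epsilon_i$ and $D_k := \sum_{i=k}^n \delta_i$ for $0\le k\le n+1$, with the convention $E_{n+1}=D_{n+1}=0$. Lemma~\ref{dominance} identifies the weak order on $R_n$ with the componentwise partial order on these partial-sum vectors. With
\[
\alpha_k := \min(E_k,D_k) - \min(E_{k+1},D_{k+1}), \qquad
\beta_k := \max(E_k,D_k) - \max(E_{k+1},D_{k+1}),
\]
as in the statement, telescoping gives $\sum_{i=k}^n \alpha_i = \min(E_k,D_k)$ and $\sum_{i=k}^n \beta_i = \max(E_k,D_k)$.

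The key verification is that the candidate words $a_0^{\alpha_0}\cdots a_n^{\alpha_n}$ and $a_0^{\beta_0}\cdots a_n^{\beta_n}$ actually lie in $R_n$, i.e., that $\alpha_k,\beta_k\in\{0,1\}$ for $0\le k<n$ and $\alpha_n,\beta_n\in\{0,1,\ldots,n+3\}$. This is reduced to the elementary monotonicity fact: if real numbers satisfy $0\le x-x'\le c$ and $0\le y-y'\le c$, then also $0\le \min(x,y)-\min(x',y')\le c$ and $0\le \max(x,y)-\max(x',y')\le c$ (one checks this by splitting into cases on which term attains the min/max). Apply the fact with $x=E_k$, $y=D_k$, $x'=E_{k+1}$, $y'=D_{k+1}$, taking $c=1$ when $0\le k<n$ (since $\epsilon_k,\delta_k\in\{0,1\}$) and $c=n+3$ when $k=n$.

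Once validity is in hand, the lattice identification is essentially automatic. The element $a_0^{\alpha_0}\cdots a_n^{\alpha_n}$ is a common lower bound of $r$ and $s$ under the weak order because its vector of partial sums satisfies $\min(E_k,D_k)\le E_k$ and $\min(E_k,D_k)\le D_k$ for every $k$. Any other common lower bound $t=a_0^{\gamma_0}\cdots a_n^{\gamma_n}\in R_n$ must, again by Lemma~\ref{dominance}, have partial sums satisfying $\sum_{i=k}^n\gamma_i\le \min(E_k,D_k)$ for every $k$, and hence lies below $a_0^{\alpha_0}\cdots a_n^{\alpha_n}$. The symmetric argument with $\max$ in place of $\min$ proves the formula for $\vee$.

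The only real obstacle in this plan is the validity check in the second paragraph — the short monotonicity lemma on $\min$/$\max$ of pairs of nondecreasing numbers — without which the candidate expressions would not even be elements of $R_n$. Everything else is straightforward bookkeeping on partial sums together with an appeal to Lemma~\ref{dominance}.
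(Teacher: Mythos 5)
Your proof is correct and follows the same route the paper intends: the paper offers no written proof, simply asserting that the corollary follows from Lemma~\ref{dominance}, and your argument is the natural fleshing-out of that assertion (identify the order with the componentwise order on suffix sums, where meet and join are termwise $\min$ and $\max$). The one piece of genuine content you add beyond the paper is the verification that $\alpha_k,\beta_k$ lie in the allowed ranges so that the candidate words actually belong to $R_n$ --- a check the paper leaves implicit but which is indeed needed.
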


It follows that

\begin{corollary}\label{modular}
$R_n$ forms a modular lattice with respect to the weak order;
namely, for every $r,s\in R_n$
$$
\ell(r\vee s)+\ell(r\wedge s)=\ell (r)+\ell(s).
$$
\end{corollary}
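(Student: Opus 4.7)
The plan is to read off the modularity identity directly from the explicit join/meet formulas of Corollary~\ref{join-meet} together with the length formula of Claim~\ref{length-Rn}. Given $r=a_0^{\epsilon_0}\cdots a_n^{\epsilon_n}$ and $s=a_0^{\delta_0}\cdots a_n^{\delta_n}$ in $R_n$, I would introduce the tail sums
$$
E_k := \sum_{i=k}^{n} \epsilon_i, \qquad D_k := \sum_{i=k}^{n} \delta_i \qquad (0\le k\le n+1),
$$
with the convention $E_{n+1}=D_{n+1}=0$. In this notation the second form of Claim~\ref{length-Rn} reads $\ell(r)=\sum_{k=0}^{n} E_k$ and $\ell(s)=\sum_{k=0}^{n} D_k$.

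Next I would apply Corollary~\ref{join-meet}. Writing $r\wedge s = a_0^{\alpha_0}\cdots a_n^{\alpha_n}$ and $r\vee s = a_0^{\beta_0}\cdots a_n^{\beta_n}$, the corollary says
$$
\alpha_k = \min(E_k,D_k) - \min(E_{k+1},D_{k+1}), \qquad \beta_k = \max(E_k,D_k) - \max(E_{k+1},D_{k+1}).
$$
These are telescoping, so the tail sums of the exponents of $r\wedge s$ and $r\vee s$ are exactly $\min(E_k,D_k)$ and $\max(E_k,D_k)$ respectively. Applying Claim~\ref{length-Rn} once more gives
$$
\ell(r\wedge s) = \sum_{k=0}^{n} \min(E_k,D_k), \qquad \ell(r\vee s) = \sum_{k=0}^{n} \max(E_k,D_k).
$$

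Finally, using the pointwise identity $\min(x,y)+\max(x,y)=x+y$ for each $k$ and summing yields
$$
\ell(r\wedge s)+\ell(r\vee s) = \sum_{k=0}^{n}\bigl(E_k+D_k\bigr) = \ell(r)+\ell(s),
$$
which is the modularity identity. There is no real obstacle here; the only thing to be careful about is the telescoping, which requires the boundary convention $E_{n+1}=D_{n+1}=0$ so that $\alpha_n$ and $\beta_n$ match the formulas of Corollary~\ref{join-meet}. Once that is in place, the argument is essentially formal and does not invoke any additional properties of the weak order beyond the meet/join computation already established.
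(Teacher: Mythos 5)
Your proof is correct and follows essentially the same route as the paper: both combine the explicit join/meet formulas of Corollary~\ref{join-meet} with the tail-sum form of Claim~\ref{length-Rn}, telescope to get $\ell(r\vee s)=\sum_k \max(E_k,D_k)$ and $\ell(r\wedge s)=\sum_k \min(E_k,D_k)$, and conclude via $\min+\max=\mathrm{sum}$.
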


It should be noted that the weak order on $\widetilde{C}_n$ is not
modular.

\begin{proof}
Combining Corollary~\ref{join-meet} with Claim~\ref{length-Rn}
yields
$$
\ell(r\vee s)=\sum\limits_{j=0}^n \sum\limits_{i=j}^n \beta_i
%=\sum\limits_{j=0}^n (j+1)\left(\max\{\sum\limits_{i=j}^n
%\epsilon_i,\sum\limits_{i=j}^n
%\delta_i\}-\max\{\sum\limits_{i=j+1}^n
%\epsilon_i,\sum\limits_{i=j+1}^n \delta_i\}\right)
= \sum\limits_{j=0}^n \max\{\sum\limits_{i=j}^n \epsilon_i ,
\sum\limits_{i=j}^n \delta_i \},
%\le \sum\limits_{j=0}^n \left(\max
%\{\epsilon_n,\delta_n\}+\max\{\sum\limits_{i=j}^{n-1} \epsilon_i ,
%\sum\limits_{i=j}^{n-1} \delta_i \}\right)
$$
%$$
%\le (n+1)\max \{\epsilon_n,\delta_n\}+{n+1\choose 2}
%$$
and, similarly,
$$
\ell(r\wedge s)%=\sum\limits_{j=0}^n (j+1) \beta_j
= \sum\limits_{j=0}^n \min\{\sum\limits_{i=j}^n \epsilon_i ,
\sum\limits_{i=j}^n \delta_i \}.
$$
Hence
$$
\ell(r\wedge s)+\ell(r\vee s)= \sum\limits_{j=0}^n
\max\{\sum\limits_{i=j}^n \epsilon_i , \sum\limits_{i=j}^n
\delta_i \}+ \sum\limits_{j=0}^n \min\{\sum\limits_{i=j}^n
\epsilon_i , \sum\limits_{i=j}^n \delta_i \}
$$
$$
=\sum\limits_{j=0}^n \sum\limits_{i=j}^n (\epsilon_i+\delta_i)
=\ell(r)+\ell(s).
$$

\end{proof}

\section{The Flip Graph: Algebraic Description}% Revisited}

%\subsection{An Algebraic Description}\ \\

The colored flip graph $\Gamma_n$ is isomorphic to the Schreier
graph of the cosets of $St_n$ in $\widetilde{C}_n$ with respect to
the Coxeter generating set $\{s_0,\dots,s_n\}$. Furthermore,
fixing a set of coset representatives we can get an explicit
description of $\Gamma_n$.

\begin{proposition}\label{graph-description}
The colored flip graph $\Gamma_n$ is isomorphic to the
%restriction of the Cayley graph $X(\widetilde{C}_n,S)$ without loops to
%$R_n$.
%In other words, $\Gamma_n$ may be described as a
graph whose vertices are the elements in $R_n$; two distinct
elements $r_1,r_2\in R_n$ forms an edge if their quotient is a
Coxeter generator of $\widetilde{C}_n$ or %$(a_{n-1}a_n^{n+3})^{\pm 1}$.
they are of the form $(v,va_{n-1}a_n^{n+3})$, for any
$v=a_0^{\epsilon_0}\cdots a_{n-2}^{\epsilon_{n-2}}$.
\end{proposition}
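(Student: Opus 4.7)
The plan is to identify $\Gamma_n$ with the Schreier coset graph of $St_n$ in $\widetilde{C}_n$ relative to the Coxeter generating set $S$, and then transport this description to $R_n$ using the coset representatives supplied by Proposition~\ref{R_n}. Concretely, Proposition~\ref{t.action} together with Theorem~\ref{t.stabilizer} identifies $CTFT(n)$ with $\widetilde{C}_n/St_n$ via $rSt_n \longleftrightarrow rT_0$; under this identification, the $s_i$-flip on triangulations corresponds to left multiplication by $s_i$ on cosets. So the first step is to observe that an $s_i$-colored edge of $\Gamma_n$ between $rT_0$ and $r'T_0$ corresponds precisely to the relation $r'St_n = s_i r St_n$ with $rSt_n \neq r'St_n$.

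The heart of the proof is a direct appeal to Lemma~\ref{graph-description0}, which for every $r = a_0^{\epsilon_0}\cdots a_n^{\epsilon_n}\in R_n$ and every generator $s_i$ locates $s_i r$ with respect to the representative system $R_n$ in one of three exhaustive and mutually exclusive cases. In Case~1, $s_i r \in R_n$, so the $R_n$-neighbor of $r$ under the $s_i$-flip is $s_i r$ itself and the quotient $(s_i r)\cdot r^{-1} = s_i$ is a Coxeter generator --- an edge of the first type claimed in the proposition. In Case~2, $s_i r \in rSt_n$, so the $s_i$-action is a loop at $rT_0$ and contributes no edge. In Case~3, which occurs only for $i=n$ and only when $(\epsilon_{n-1},\epsilon_n)\in\{(0,0),(1,n+3)\}$, the $R_n$-representative of $s_n r St_n$ is obtained by toggling the pair $(\epsilon_{n-1},\epsilon_n)$ between $(0,0)$ and $(1,n+3)$ while leaving $v := a_0^{\epsilon_0}\cdots a_{n-2}^{\epsilon_{n-2}}$ fixed. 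These give precisely the edges of the second type $\{v,\,v a_{n-1} a_n^{n+3}\}$ listed in the proposition.

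I would close by checking the converse: every edge described in the proposition genuinely arises from a generator action. Type~1 edges are produced by Case~1 of the lemma. For each Type~2 edge, applying $s_n$ to either endpoint lands in the coset of the other endpoint, by the two subcases (i) and (ii) of Case~3; distinctness of the two endpoints is automatic from the fact that $R_n$ is an irredundant list of coset representatives, and the two edges (from either direction) coincide because $s_n^2 = 1$.

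The main obstacle is Case~3: here $s_n r \notin R_n$, and one must explicitly rewrite $s_n r = r'\,h$ with $r'\in R_n$ and $h\in St_n$. This bookkeeping --- carried out inside Lemma~\ref{graph-description0} using the generators $g_0$, $g_n$ of $St_n$ --- is the source of the second, non-Coxeter family of edges and explains why $\Gamma_n$ fails to be the bare Hasse diagram of the weak-order interval $R_n$.
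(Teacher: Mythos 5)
Your proposal is correct and follows the paper's own route: the paper derives the proposition as an immediate consequence of Lemma~\ref{graph-description0} via the Schreier-graph identification of $\Gamma_n$ with the cosets of $St_n$, which is exactly the case analysis you spell out. The extra detail you supply (Case~1 giving Coxeter-generator edges, Case~2 giving loops, Case~3 giving the $(v,\,v a_{n-1}a_n^{n+3})$ edges) is precisely what the paper leaves implicit.
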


In other words, the flip graph %is a modified
is obtained from the (undirected) Hasse diagram $\Sigma_n$ of the
left weak order on $R_n$ %. The modification is
by adding the edges $(v,va_{n-1}a_n^{n+3})$, for any
$v=a_0^{\epsilon_0}\cdots a_{n-2}^{\epsilon_{n-2}}$.

\begin{proof}
Proposition~\ref{graph-description} is an immediate consequence of
Lemma~\ref{graph-description0}.
\end{proof}

\begin{observation}\label{aut0}
A right multiplication by $a_n$ is an automorphism of the colored
flip graph $\Gamma_n$.
\end{observation}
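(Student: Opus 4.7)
The plan is to deduce the observation from the fact that $a_n$ lies in the normalizer $N_{\tC_n}(St_n)$. Right multiplication by any element of $\tC_n$ commutes with left multiplication, and is therefore an automorphism of the Cayley graph of $\tC_n$ with respect to $S$. Since the colored flip graph $\Gamma_n$ is, by Proposition~\ref{graph-description}, the Schreier graph on the left cosets $\tC_n/St_n$ under left multiplication by the generators, right multiplication by $a_n$ descends to a well-defined graph automorphism of $\Gamma_n$ as soon as $a_n^{-1} St_n a_n = St_n$. So the task reduces to checking that conjugation by $a_n^{-1}$ preserves the generating set $\{g_0, s_1, \ldots, s_{n-1}, g_n\}$ of $St_n$.

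To verify this, I would work in the faithful linear representation of $\tC_n$ on $\bbr^n$ introduced in the proof of Theorem~\ref{t.stabilizer}, where $a_n = s_n s_{n-1} \cdots s_0$ acts by $(x_1,\ldots,x_n) \mapsto (x_2,\ldots,x_n,x_1+2)$ and $a_n^{-1}$ acts by $(x_1,\ldots,x_n) \mapsto (x_n - 2, x_1, \ldots, x_{n-1})$. The identity $g_n = a_n^{n+4}$ immediately gives $a_n^{-1} g_n a_n = g_n$. A short direct computation on the linear action then yields
\[
a_n^{-1} s_i a_n = s_{i+1} \quad (1 \le i \le n-2), \qquad a_n^{-1} s_{n-1} a_n = g_0, \qquad a_n^{-1} g_0 a_n = s_1.
\]
Thus conjugation by $a_n^{-1}$ cyclically permutes $\{g_0, s_1, \ldots, s_{n-1}\}$ and fixes $g_n$, so $St_n$ is normalized.

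An alternative, more combinatorial route is to use the explicit description of coset representatives from Proposition~\ref{R_n}. Right multiplication of $r = a_0^{\epsilon_0} \cdots a_n^{\epsilon_n} \in R_n$ by $a_n$ increments $\epsilon_n$ by one, reducing modulo $n+4$ via the relation $a_n^{n+4} = g_n \in St_n$; under the bijection $\varphi$ this is simply the cyclic shift $(v_0, v_1, \ldots, v_n) \mapsto (v_0 - 1 \bmod (n+4),\, v_1, \ldots, v_n)$, which corresponds to rotating the $(n+4)$-gon by one vertex. Since flips and the proper coloring are defined intrinsically in the polygon, rotation is manifestly a flip- and color-preserving symmetry.

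The only genuine calculation is either the four conjugation identities above or, equivalently, the verification that multiplication by $a_n$ in coset coordinates coincides with polygon rotation; both are routine, and the rest is formal.
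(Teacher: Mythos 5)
Your proposal is correct, and in fact contains two valid arguments. Your ``alternative, more combinatorial route'' is precisely the paper's proof: the authors simply observe that rotation by $2\pi/(n+4)$ carries $a_0^{\epsilon_0}\cdots a_{n-1}^{\epsilon_{n-1}}a_n^{t}T_0$ to $a_0^{\epsilon_0}\cdots a_{n-1}^{\epsilon_{n-1}}a_n^{t+1\ (\mathrm{mod}\ n+4)}T_0$, and since rotation manifestly commutes with flips and with the intrinsic proper coloring, the claim follows; your description of this via $\varphi$ as the shift $(v_0,v_1,\ldots,v_n)\mapsto(v_0-1,v_1,\ldots,v_n)$ is the same argument in coordinates. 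Your first route --- showing $a_n$ normalizes $St_n$ by checking, in the faithful linear representation, that conjugation by $a_n^{-1}$ sends $s_i\mapsto s_{i+1}$ $(1\le i\le n-2)$, $s_{n-1}\mapsto g_0$, $g_0\mapsto s_1$ and fixes $g_n=a_n^{n+4}$ --- is genuinely different from what the paper does, and the conjugation identities check out against the explicit affine maps given in the proof of Theorem~\ref{t.stabilizer}. What the algebraic route buys is independence from any geometric interpretation (it works purely at the level of the Schreier graph and would apply verbatim to any colored Schreier graph and any normalizing element); what the paper's rotation argument buys is brevity and the explicit geometric meaning of the automorphism, which the paper then reuses in Proposition~\ref{aut2}. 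The only cosmetic quibble is that the identification of $\Gamma_n$ with the Schreier graph is asserted in the text preceding Proposition~\ref{graph-description} (resting on Theorem~\ref{t.stabilizer} and Proposition~\ref{t.action}) rather than being the content of that proposition itself; this does not affect the validity of either of your arguments.
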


\begin{proof}
%For every $a_0^{\epsilon_0}\cdots a_{n-1}^{\epsilon_{n-1}}a_n^t \in R_n$,
A rotation by $\frac{2\pi}{n+4}$ of the colored triangulation
$a_0^{\epsilon_0}\cdots a_{n-1}^{\epsilon_{n-1}}a_n^t T_0$ gives
the triangulation
 $a_0^{\epsilon_0}\cdots
a_{n-1}^{\epsilon_{n-1}}a_n^{t+1\, ({\rm mod}\, n+4)}T_0$.
%$\varphi^{-1}(a_0^{\epsilon_0}\cdots a_{n-1}^{\epsilon_{n-1}}a_n^t
%T_0)$ gives the triangulation
% $\varphi^{-1}(a_0^{\epsilon_0}\cdots
%a_{n-1}^{\epsilon_{n-1}}a_n^{t+1\, ({\rm mod}\, n+4)}T_0)$.
\end{proof}

\medskip

For every pair $\pi,\sigma\in R_n$ let
$\dist_{\Gamma_n}(\pi,\sigma)$ be the distance between $\pi T_0$
and $\sigma T_0$ in $\Gamma_n$.
%the corresponding cosets $\pi St_n$ and $\sigma
%St_n$ in this Schreier graph. %For every pair $\pi,\sigma\in R_n$
%Let $[\pi,\sigma]$ denotes their meet in the left weak order on
%$R_n$.

It follows from Observation~\ref{aut0} that
\begin{corollary}\label{rotation0}
For every pair $r, s\in R_n$ and an integer $t$
$$
\dist_{\Gamma_n}(r,s)=\dist_{\Gamma_n}(r a_n^t,s a_n^t).
$$
\end{corollary}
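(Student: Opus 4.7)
The plan is to deduce Corollary~\ref{rotation0} directly from Observation~\ref{aut0}, using the elementary fact that graph automorphisms preserve distances. First I would recall that for any graph $G$, any graph automorphism $\phi$ of $G$, and any two vertices $x,y$ of $G$, one has $\dist_G(\phi(x),\phi(y))=\dist_G(x,y)$: a walk of length $\ell$ from $x$ to $y$ is mapped bijectively by $\phi$ to a walk of the same length from $\phi(x)$ to $\phi(y)$, and the same holds via $\phi^{-1}$, so geodesics correspond to geodesics.

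By Observation~\ref{aut0}, right multiplication by $a_n$ (interpreted on $R_n$ by sending each $r \in R_n$ to the unique element of $R_n$ representing the coset $r a_n St_n$) is a graph automorphism of $\Gamma_n$. Applying this automorphism yields the case $t=1$ of the corollary. The composition of automorphisms is again an automorphism, so iterating $t$ times gives that right multiplication by $a_n^{t}$ is an automorphism of $\Gamma_n$, and hence preserves distances, for every positive integer $t$.

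For $t \le 0$, I would use that $g_n = (s_n \cdots s_0)^{n+4} = a_n^{n+4}$ lies in $St_n$ by the definition of $St_n$ preceding Theorem~\ref{t.stabilizer}. Therefore right multiplication by $a_n^{n+4}$ acts trivially on the cosets in $R_n$, so $a_n^{t}$ and $a_n^{t \bmod (n+4)}$ induce the same permutation of $R_n$, and the positive case already covers every integer $t$. There is essentially no genuine obstacle in this argument: the entire content of the corollary is already packaged in Observation~\ref{aut0}, and everything else is a formal application of the principle that graph automorphisms preserve distances.
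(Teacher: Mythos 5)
Your proof is correct and takes essentially the same route as the paper, which states Corollary~\ref{rotation0} as an immediate consequence of Observation~\ref{aut0} without further argument. Your explicit remarks that automorphisms preserve distances, that powers of an automorphism are automorphisms, and that $a_n^{n+4}=g_n\in St_n$ handles negative $t$, merely spell out what the paper leaves implicit.
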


\section{The Flip Graph: Diameter}\label{Diameter}

%\subsubsection{The Results}\label{diameter-results}

Denote by $\diam(\Gamma_n)$ the diameter of the colored flip graph
$\Gamma_n$.

\begin{theorem}\label{diameter}
For every $n\ge 3$
$$
\diam(\Gamma_n) = \frac{(n+1)(n+4)}{2}.
$$
\end{theorem}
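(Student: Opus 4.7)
The proof splits into matching upper and lower bounds, both exploiting the description of $\Gamma_n$ in Proposition~\ref{graph-description} as the (undirected) Hasse diagram of the left weak order on $R_n$, augmented by the shortcut edges $(v,va_{n-1}a_n^{n+3})$ with $v=a_0^{\epsilon_0}\cdots a_{n-2}^{\epsilon_{n-2}}$. By the isometry from Corollary~\ref{rotation0}, the pair $(r,s)\in R_n\times R_n$ may be normalized by a simultaneous $a_n$-shift so that, say, $s$ has $a_n$-exponent zero, and distances depend only on the pair modulo this diagonal action.

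For the \textbf{upper bound} $\diam(\Gamma_n)\le\frac{(n+1)(n+4)}{2}$, I would prove, for every pair $r,s\in R_n$, that there is a path of length at most $\frac{(n+1)(n+4)}{2}$ using one shortcut edge together with weak-order covers. The modular lattice structure of Corollaries~\ref{join-meet}--\ref{modular} supplies a weak-order path through the meet $r\wedge s$ of length $\ell(r\vee s)-\ell(r\wedge s)$, while splicing in a single shortcut at a suitable prefix $v'$ trades a weak-order arc of length $n+(n+1)(n+3)$ for one edge. Combining Claim~\ref{length-Rn} and Corollary~\ref{join-meet} with a case analysis on $(\epsilon_{n-1}(r),\epsilon_n(r))$ — reduced by the self-duality from Proposition~\ref{lattice} — one checks that the optimal hybrid path respects the claimed bound in every regime.

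For the \textbf{lower bound}, I would exhibit an explicit pair $(r_*,s_*)$ at distance exactly $\frac{(n+1)(n+4)}{2}$. A natural candidate, motivated by the $a_n$-rotational symmetry, is $r_*=\mathrm{id}$ together with an element $s_*$ placed roughly half-way around the $a_n$-orbit and perturbed in its prefix so that no shortcut shortens the required travel. The lower bound then follows from constructing a potential $\Phi:R_n\to\bbq$ with $|\Phi(r)-\Phi(s)|\le 1$ on every edge of $\Gamma_n$ and $\Phi(s_*)-\Phi(r_*)=\frac{(n+1)(n+4)}{2}$. A natural form is
\[
\Phi(r)=\ell(r)-\bigl(n+(n+1)(n+3)-1\bigr)\cdot \sigma(r),
\]
where $\sigma(r)$ is an integer ``winding number'' that jumps by $1$ across each shortcut edge so as to absorb the corresponding jump of the Coxeter length.

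The \textbf{main obstacle} is the rigorous design of $\sigma$: it must be path-independent on the vertex set of $\Gamma_n$, increment by exactly $1$ across each shortcut edge, and remain constant across every weak-order cover. Equivalently, one must unwind the cyclic $\langle a_n\rangle$-fibration of $\Gamma_n$ in a manner consistent with the weak-order structure, exploiting the precise form of the shortcut edges dictated by Lemma~\ref{graph-description0}. A secondary difficulty in the upper-bound argument is the corner regime in which neither the meet nor the join admits a shortcut; there one routes the path through a non-extremal intermediate whose exponent vector is read off from Corollary~\ref{join-meet}.
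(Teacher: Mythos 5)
Your lower-bound strategy cannot be carried out as stated: the integer winding number $\sigma$ you require does not exist. The graph $\Gamma_n$ contains cycles with exactly one shortcut edge. Indeed, for any $v=a_0^{\epsilon_0}\cdots a_{n-2}^{\epsilon_{n-2}}$ one has $v<va_{n-1}a_n^{n+3}$ in the left weak order on $R_n$ (compare the exponent vectors in dominance order), so these two vertices are joined inside $\Gamma_n$ by a saturated chain of Hasse covers of length $\ell(a_{n-1}a_n^{n+3})=n+(n+1)(n+3)$; closing this chain with the shortcut edge $(v,va_{n-1}a_n^{n+3})$ yields a cycle along which your $\sigma$ would have to be constant on every edge except the last, where it must jump by $1$ --- a contradiction with path-independence. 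Consequently no globally defined potential of the form $\Phi=\ell-\bigl((n+1)(n+4)-1\bigr)\sigma$ exists. The correct substitute is much simpler: every edge of $\Gamma_n$ changes $\ell$ by $\pm 1$ (Hasse cover) or by $\pm\bigl((n+1)(n+4)-1\bigr)$ (shortcut), so any $r$--$s$ path either uses no shortcut and has length at least $|\ell(s)-\ell(r)|$, or uses one and has length at least $(n+1)(n+4)-|\ell(s)-\ell(r)|$; equivalently, $\ell$ reduced modulo $(n+1)(n+4)$ changes by $\pm 1$ on \emph{every} edge and serves as a cyclic-group-valued potential. Since $R_n$ is graded with top rank $3\binom{n+2}{2}\ge\frac{(n+1)(n+4)}{2}$, for each $r$ one can pick $s$ with $|\ell(s)-\ell(r)|=\frac{(n+1)(n+4)}{2}$, giving the lower bound without any explicit antipodal pair.

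The upper bound is also incomplete exactly where the work lies. First, the claim that a shortest path may be taken to be a weak-order geodesic for one of the two rotated pairs $(ra_n^{-\epsilon_n},sa_n^{-\epsilon_n})$ or $(ra_n^{-\delta_n},sa_n^{-\delta_n})$ is nontrivial: the paper proves it (Lemma~\ref{gamma-distance}) by mapping $\Gamma_n$ homomorphically onto an $(n+4)$-cycle and showing that a geodesic never backtracks across a fiber, which uses modularity of each fiber $U_i$. Second, and more importantly, even granting this one must still bound the minimum of the two resulting lattice distances, which take the form $\min\bigl\{\sum_{j=0}^n|\tfrac{n+4}{2}+x_j|,\ \sum_{j=0}^n|\tfrac{n+4}{2}-x_j|\bigr\}$ with $|x_{j+1}-x_j|\le 1$ and $-\tfrac{n+4}{2}\le x_n<\tfrac{n+4}{2}$. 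The paper's argument is that if some summand of either sum vanishes along the way then that sum is already at most $\binom{n+1}{2}$, and otherwise the two sums total exactly $(n+1)(n+4)$, so the minimum is at most half of that. Your phrase ``one checks that the optimal hybrid path respects the claimed bound in every regime'' is precisely this estimate left undone; without it the proposal establishes neither bound.
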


\subsection{Proof of Theorem~\ref{diameter}}\

The proof relies on the intimate relation between the colored flip
graph $\Gamma_n$ and the Hasse diagram of the weak order on $R_n$,
see Proposition~\ref{graph-description} and comment afterwards.
The upper bound (Lemma~\ref{upper-bound}) is obtained by combining
the properties of the weak order on $R_n$ with the invariance of
the flip graph under rotation. The grading of the Hasse diagram
together with Proposition~\ref{graph-description} implies a lower
bound (Lemma~\ref{lower-bound}).

\subsubsection{Distance}

For a graph $G$ denote by $\dist_G(v,u)$ the distance (i.e., the
length of the shortest path) between the vertices $u$ and $v$. We
begin with a general lemma.

\begin{lemma}\label{hasse-distance}
Let $P$ be a modular lattice. Let $\ell$ be its rank function and
$\Sigma$ its Hasse diagram. Then for every pair $r,s\in P$
$$
\dist_{\Sigma}(r,s)=\ell (r\vee s)-\ell(r \wedge s).
$$
\end{lemma}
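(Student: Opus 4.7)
The plan is to prove matching upper and lower bounds for $\dist_{\Sigma}(r,s)$.

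For the upper bound I would exhibit an explicit path. Choose a saturated chain from $r \wedge s$ up to $r$ (of length $\ell(r)-\ell(r\wedge s)$) and another saturated chain from $r\wedge s$ up to $s$ (of length $\ell(s)-\ell(r\wedge s)$). Concatenating them gives a walk in $\Sigma$ from $r$ to $s$ of total length $\ell(r)+\ell(s) - 2\ell(r\wedge s)$. By modularity, $\ell(r)+\ell(s) = \ell(r\vee s)+\ell(r\wedge s)$, so this simplifies to $\ell(r\vee s)-\ell(r\wedge s)$, giving $\dist_{\Sigma}(r,s) \le \ell(r\vee s)-\ell(r\wedge s)$.

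For the lower bound I would introduce the function $d(x,y) := \ell(x\vee y)-\ell(x\wedge y)$ and show that it satisfies a key Lipschitz property along Hasse edges: whenever $x$ covers $y$ and $z\in P$ is arbitrary,
\[
|d(x,z)-d(y,z)| \le 1.
\]
To prove this, apply modularity to the pairs $(x,z)$ and $(y,z)$:
\[
\ell(x\vee z)+\ell(x\wedge z) = \ell(x)+\ell(z), \qquad
\ell(y\vee z)+\ell(y\wedge z) = \ell(y)+\ell(z).
\]
Subtracting and using $\ell(x)-\ell(y)=1$ gives
\[
[\ell(x\vee z)-\ell(y\vee z)] + [\ell(x\wedge z)-\ell(y\wedge z)] = 1.
\]
Since $y\le x$ forces both bracketed differences to be nonnegative, they lie in $\{0,1\}$ with sum $1$, so $d(x,z)-d(y,z)\in\{-1,+1\}$. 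This is the main technical step; it is where modularity is used in an essential way (in a non-modular ranked lattice the sum could exceed $1$ and the Lipschitz bound would fail).

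Given this, the lower bound is immediate: for any walk $r=x_0,x_1,\dots,x_k=s$ in $\Sigma$, the sequence $d(x_i,s)$ changes by at most $1$ per step, hence
\[
d(r,s) = |d(x_0,s)-d(x_k,s)| \le k,
\]
since $d(s,s)=0$. Taking $k = \dist_{\Sigma}(r,s)$ gives $\ell(r\vee s)-\ell(r\wedge s) \le \dist_{\Sigma}(r,s)$, and combining with the upper bound completes the proof.
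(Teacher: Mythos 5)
Your proof is correct, and its lower-bound half takes a genuinely different route from the paper's. The paper argues by surgery on a shortest path: it counts the local maxima (``picks'') of the path, shows that two consecutive picks $v,w$ with intermediate local minimum $z=v\wedge w$ can be merged into one (using modularity to reroute that segment at equal length), and recurses down to a path with a single pick, which must pass through $r\vee s$ and therefore has length $2\ell(r\vee s)-\ell(r)-\ell(s)$. You instead introduce the potential $d(x,y)=\ell(x\vee y)-\ell(x\wedge y)$ and prove the Lipschitz bound $|d(x,z)-d(y,z)|\le 1$ across each Hasse edge, via the rank identity $\ell(x\vee z)+\ell(x\wedge z)=\ell(x)+\ell(z)$ --- which is exactly the form of modularity the paper uses --- and then sum along an arbitrary walk. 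The substance of the appeal to modularity is the same, but your potential-function argument sidesteps the delicate points in the paper's surgery step (justifying that the unique local minimum between two picks equals the meet, and that the rerouted segment has the same length), so it is arguably the cleaner write-up; your observation that the two bracketed differences are nonnegative and sum to $1$ is the key point and is stated correctly. Your upper bound, concatenating saturated chains through $r\wedge s$, is the order-dual of the paper's one-pick path through $r\vee s$, and the two lengths agree by the same modular identity.
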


\begin{proof}
If there is a shortest path between $r$ and $s$ with at most one
pick (local maximum), then by the modularity
$$
\dist_{\Sigma}(r,s)=2\ell(r \vee s)-\ell(r)-\ell(s)=\ell (r\vee
s)-\ell(r \wedge s).
$$
%We proceed by induction on the number of picks in the shortest
%path.
Given a shortest path from $r$ to $s$ with $k>1$ picks let $v,w$
be two consequent picks in the path.  There is a unique local
minimum $z$ in the path from $v$ to $w$. By the minimality of the
length of the path, $z=v \wedge w$. By the modularity we can
replace the segment from $v$ to $w$ through the meet $z$ by a path
through $v\wedge w$ and obtain a path of same length and $k-1$
picks. Proceed by recursion to get a shortest path with one pick.

\end{proof}

%{\bf reference or proof !!!}

\begin{lemma}\label{gamma-distance}
For every pair $r=\prod\limits_{i=0}^n a_i^{\epsilon_i},
s=\prod\limits_{i=0}^n a_i^{\delta_i}\in V(\Gamma_n)=R_n$
$$
\dist_{\Gamma_n}(r,s)= $$ $$\min\{\ell (ra_n^{-\epsilon_n}\vee
sa_n^{-\epsilon_n})-\ell(ra_n^{-\epsilon_n} \wedge
sa_n^{-\epsilon_n}), \ell (ra_n^{-\delta_n}\vee
sa_n^{-\delta_n})-\ell(ra_n^{-\delta_n} \wedge
sa_n^{-\delta_n})\}.
$$
\end{lemma}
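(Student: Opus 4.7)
My plan is to establish matching upper and lower bounds for $\dist_{\Gamma_n}(r,s)$.

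For the upper bound, I will use the rotation automorphism of Corollary~\ref{rotation0}: $\dist_{\Gamma_n}(r,s)=\dist_{\Gamma_n}(ra_n^{-t}, sa_n^{-t})$ for every integer $t$. By Proposition~\ref{graph-description}, the Hasse diagram $\Sigma_n$ of the left weak order on $R_n$ is a spanning subgraph of $\Gamma_n$, so $\dist_{\Gamma_n}(u,v)\le \dist_{\Sigma_n}(u,v)$ for all $u,v\in R_n$. Since $R_n$ is a modular lattice in the weak order (Corollary~\ref{modular}), Lemma~\ref{hasse-distance} evaluates $\dist_{\Sigma_n}(u,v)=\ell(u\vee v)-\ell(u\wedge v)$. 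Specializing $t=\epsilon_n$ and $t=\delta_n$ produces the two upper-bound candidates, whose minimum bounds $\dist_{\Gamma_n}(r,s)$ from above.

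For the lower bound, the plan is as follows. Given any geodesic $\gamma$ in $\Gamma_n$ from $r$ to $s$, I will exhibit a rotation $t\in\{\epsilon_n,\delta_n\}$ under which $\gamma$ becomes a path in $\Sigma_n$ of the same length between $ra_n^{-t}$ and $sa_n^{-t}$; Lemma~\ref{hasse-distance} applied in the modular lattice $R_n$ then forces $|\gamma|\ge H(r,s;t)$. The central observation, verified directly from Corollary~\ref{left-descents}, is the following dichotomy: each extra edge $(v,va_{n-1}a_n^{n+3})$ of Proposition~\ref{graph-description} becomes an $s_n$-Hasse edge after rotation by $a_n^{-t}$ for every $t\not\equiv 0\pmod{n+4}$; conversely, an $s_n$-Hasse edge with lower-vertex $a_n$-exponent $c$ becomes an extra edge precisely in the frame $t\equiv c\pmod{n+4}$; and Hasse edges by $s_i$ with $0\le i<n$ are frame-independent. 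So each edge of $\gamma$ is non-Hasse in at most one rotated frame.

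The main obstacle is to verify that one of the two canonical frames $t\in\{\epsilon_n,\delta_n\}$ actually converts every edge of $\gamma$ into a Hasse edge simultaneously. A clean way to sidestep delicate frame-by-frame bookkeeping is to lift $\gamma$ to the infinite cover of $\Gamma_n$ obtained by unwrapping the cyclic $\bbz_{n+4}$-coordinate carried by $a_n^{\epsilon_n}$; in this cover all extra edges lift to Hasse edges of an enlarged modular lattice, and the two lifts of $s$ closest to a chosen lift of $r$ yield precisely the quantities $H(r,s;\epsilon_n)$ and $H(r,s;\delta_n)$. Lemma~\ref{hasse-distance} applied in the cover then provides the lower bound $|\gamma|\ge\min\{H(r,s;\epsilon_n),H(r,s;\delta_n)\}$, matching the upper bound and completing the proof.
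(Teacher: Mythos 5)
Your upper bound is fine and matches the paper's: $\Sigma_n$ is a spanning subgraph of $\Gamma_n$, rotate by $a_n^{-\epsilon_n}$ or $a_n^{-\delta_n}$ (Corollary~\ref{rotation0}), and apply Lemma~\ref{hasse-distance} in the modular lattice $R_n$. The lower bound, however, has a genuine gap, and it sits exactly at the point you flag as ``the main obstacle'' and then claim to sidestep. Lifting a geodesic $\gamma$ to the $\bbz$-cover of the cyclic coordinate, the lift ends at \emph{some} lift $\tilde s_k$ of $s$ (level $\delta_n+k(n+4)$), and Lemma~\ref{hasse-distance} in the cover gives only $|\gamma|\ge\ell(\tilde r\vee\tilde s_k)-\ell(\tilde r\wedge\tilde s_k)$ for \emph{that} $k$. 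To conclude $|\gamma|\ge\min\{H(r,s;\epsilon_n),H(r,s;\delta_n)\}$ you must additionally show that for every $k$ the cover-distance to $\tilde s_k$ is at least the minimum of the cover-distances to the two lifts whose levels bracket that of $\tilde r$ --- equivalently, that winding around the cycle never pays. Your sentence ``the two lifts of $s$ closest to a chosen lift of $r$ yield precisely the quantities $H(r,s;\epsilon_n)$ and $H(r,s;\delta_n)$'' either assumes this (if ``closest'' means closest in graph distance) or leaves it unproved (if it means closest in level). This is precisely the content the paper supplies with its two-step argument: a geodesic's image under the homomorphism $\rho$ onto the $(n+4)$-cycle has no backtracking (the $v_1,\dots,v_k$ argument, using that $\dist_{U_0}(v_1,v_k)=\dist_{U_1}(v_2,v_{k-1})$), and cannot wrap a full cycle because a preimage of a full cycle has length at least $\frac{(n+1)(n+5)}{2}>\binom{n+1}{2}=\diam(U_i)$. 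In your framework the gap is fillable: writing the cover-distance to $\tilde s_k$ as $\sum_{j=0}^n|b_j+k(n+4)|$ with $|b_j-b_n|\le n-j$ and $b_n$ in a window of width $n+4$, one checks by convexity in $k$ plus the estimates $\sum_j(n-j)=\binom{n+1}{2}<(n+1)(n+4)$ that the minimum over $k\in\bbz$ is attained at a bracketing lift. But as written this step is asserted, not proved, and it is the heart of the lemma.

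Two smaller points. First, applying Lemma~\ref{hasse-distance} in the cover requires knowing that the unwrapped poset ($\{0,1\}^n\times\bbz$ with the dominance order of Lemma~\ref{dominance}, $\epsilon_n$ now ranging over $\bbz$) is a graded modular lattice whose Hasse diagram is the lifted graph; this is true and routine (the meet/join formulas of Corollary~\ref{join-meet} extend verbatim), but it is an extension of the paper's statements that you should record. Second, your ``dichotomy'' is slightly off: an $s_n$-Hasse edge from $u a_{n-1}a_n^{m}$ up to $u a_n^{m+1}$ becomes an extra edge in the frame $t\equiv m+1\pmod{n+4}$, i.e.\ the relevant residue is the $a_n$-exponent of the \emph{upper} vertex, not the lower one; this does not affect the architecture since the dichotomy is ultimately not used in your covering argument.
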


\begin{proof}
Let $C_n$ be a cycle of length $n+4$ whose set of vertices is
$\{u_i:\ 0\le i< n+4\}$ and edges $(u_i,u_{(i+1)\rm{ mod }(n+4)})$
for every $0\le i< n+4$. Consider the map
$\rho:\Gamma_n\longrightarrow C_n$, defined by
$\rho(a_0^{\epsilon_0}\cdots a_{n-1}^{\epsilon_{n-1}}a_n^i):=u_i$.
%$\rho(a_0^{\epsilon_0}\cdots
%a_{n-1}^{\epsilon_{n-1}}a_n^{\epsilon_n}):=U_{\epsilon_n}$, where
By Proposition~\ref{graph-description},  $\rho$ is a graph
homomorphism.
%, namely there is an edge in the image if and only if
%there is an adjacent pair in the origin.
Let $U_i$ be the pre-image of $u_i$, i.e.
$$
U_i:=\rho^{-1}(u_i)=\{a_0^{\epsilon_0}\cdots
a_{n-1}^{\epsilon_{n-1}}a_n^i:\ \epsilon_j\in\{0,1\} \rm{\ for\
all\ } 0\le j<n\} \qquad (0\le i<n+4).
$$
Notice that the subgraph of $\Gamma_n$ induced by $U_i$ is
isomorphic to the undirected Hasse diagram of the weak order on
$U_i$.

We first claim that for
any $r,s\in R_n$ %the image under $\rho$ of
a shortest path from $r$ to $s$ in $\Gamma_n$ does not contain a
sequence of the form $v_1,\dots,v_k$, where
$v_1=\prod\limits_{j=0}^{n-1}
a_j^{\mu_j}a_n^i,v_k=\prod\limits_{j=0}^{n-1} a_j^{\nu_j}a_n^i\in
U_i$ for some $i$ and $v_2,\dots,v_{k-1}\in U_j$ for $j=(i\pm
1)\mod (n+4)$. If there is such a shortest path, then by
Corollary~\ref{rotation0}, we may assume that $i=0$ and $j=1$;
namely $v_1, v_k\in U_0$ and $v_1,v_2,\dots,v_{k-1}\in U_1$.
%$U_i-U_{(i\pm 1)\mod (n+4)}-U_i$.
By assumption of the length minimality of the path
$$
\dist_{U_0} (v_1,v_k)\ge 2 +\dist_{U_1}(v_2,v_{k-1}).
$$
On the other hand, by Proposition~\ref{graph-description},
$\mu_{n-1}=\nu_{n-1}=1$, $v_2=\prod\limits_{j=0}^{n-2}
a_j^{\mu_j}a_n$, and $v_{k-1}=\prod\limits_{j=0}^{n-2}
a_j^{\nu_j}a_n$. Hence, by Lemma~\ref{hasse-distance} together
with Corollary~\ref{join-meet},
$$
\dist_{U_0} (v_1,v_k)=\dist_{U_1}(v_2,v_{k-1}).
$$
Contradiction.

We deduce that the $\rho$-image of the shortest path between any
pair $r,s\in U_i$ for some $i$ is either of length zero or a
multiple of a full cycle. But it cannot be a multiple of a full
cycle since a pre-image of a full cycle is of length at least
$$
\ell(a_n^{n+3})-\ell(a_0\cdots a_{n-1})= \frac{(n+1)(n+5)}{2}.
$$
On the other hand, since $U_i$ is a modular lattice, the diameter
of $U_i$ is the difference between the lengths of the top and
bottom elements in $U_i$. That is
\begin{equation}\label{U_i-diameter}
\diam(U_i)=\ell(a_0\cdots a_{n-1}a_i)-\ell(a_i)={n+1\choose 2}.
\end{equation}
One concludes that  for any pair $r,s\in U_i$ the shortest path is
contained in the modular lattice $U_i$, so the lemma holds for
such a pair.

If $r\in U_i$, $s\in U_j$ and $i< j$ then by the above arguments
the $\rho$-image of the shortest path between $r$ and $s$ is one
of the two intervals from $u_i$ to $u_j$ in the cycle. By
Corollary~\ref{rotation0}, a right multiplication (by
$a_n^{-\epsilon_n}$ if the image contains $u_{i+1}$ or by
$a^{-\delta_n}$ otherwise) maps the shortest path to a shortest
path in the modular lattice $R_n$. Lemma~\ref{hasse-distance}
completes the proof.

\end{proof}

\subsubsection{Diameter: Upper Bound}

In this subsection we prove

\begin{lemma}\label{upper-bound}
For every $n\ge 3$
$$
\diam(\Gamma_n)\le \frac{(n+1)(n+4)}{2}.
$$
\end{lemma}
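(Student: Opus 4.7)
My plan is to apply Lemma~\ref{gamma-distance}, which expresses the flip-graph distance as the minimum of two Hasse-distances in the modular lattice $R_n$, and bound this minimum by $T := (n+1)(n+4)/2$. Writing $r = \prod_i a_i^{\epsilon_i}$, $s = \prod_i a_i^{\delta_i}$, set $\Delta_j := \sum_{i=j}^{n-1}(\epsilon_i - \delta_i)$ and $k := (\delta_n - \epsilon_n) \bmod (n+4)$. Combining Lemma~\ref{gamma-distance} with Corollary~\ref{join-meet} and Claim~\ref{length-Rn} yields
\[
\dist_{\Gamma_n}(r,s) = \min(A, B),\quad A = k + \sum_{j=0}^{n-1}|\Delta_j - k|,\quad B = (n+4-k) + \sum_{j=0}^{n-1}|\Delta_j + (n+4-k)|.
\]
The walk $(\Delta_j)$ satisfies $|\Delta_j| \le n-j$ and $|\Delta_{j+1} - \Delta_j| \le 1$; by swapping $r,s$ (which exchanges $k$ with $n+4-k$), I may assume $k \le (n+4)/2$.

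Evaluating $|\Delta_j - k| + |\Delta_j + (n+4-k)|$ by cases gives the identity $A + B = 2T + 2E$, where
\[
E = \sum_{j\,:\,\Delta_j > k}(\Delta_j - k) + \sum_{j\,:\,\Delta_j < -(n+4-k)}\bigl(-\Delta_j - (n+4-k)\bigr) \ge 0.
\]
If $E = 0$, then $\min(A,B) \le (A+B)/2 = T$, and we are done. Otherwise the key structural observation is that the ``high'' set $\{j : \Delta_j > k\}$ and ``low'' set $\{j : \Delta_j < -(n+4-k)\}$ cannot both be non-empty: a high $j$ and low $j'$ would force $|\Delta_j - \Delta_{j'}| > n+4$, while the unit-step walk structure gives $|\Delta_j - \Delta_{j'}| \le |j-j'| \le n-1$, a contradiction.

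Given one-sided excess, I would compute $A - B$ directly: each ``high'' (respectively ``low'') index contributes $-(n+4)$ (respectively $+(n+4)$) to $A - B$, and the contributions of the middle-range indices can be controlled using the step constraint together with the amplitude bound $|\Delta_j| \le n-j$ and the assumption $k \le (n+4)/2$. This yields $|A - B| \ge 2E$, whence $\min(A, B) = T + E - |A - B|/2 \le T$, proving the lemma. The main technical obstacle is verifying $|A - B| \ge 2E$ precisely in the one-sided-excess case. The extremal configuration is the walk $\Delta_j = n-j$ (or its negative): for it, a direct computation using the forced structure of the contiguous ``high'' prefix and the induced monotone descent in the middle range reduces the required inequality to the elementary $(n+1)(n+4-2k) + 4k(n-k) \ge n^2 - 2k^2 + n - 2k$, which simplifies to $4(n+1) + 2k(n-k) \ge 0$; all other configurations are dominated by this extremal one.
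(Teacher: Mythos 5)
Your overall frame --- invoking Lemma~\ref{gamma-distance}, deriving the identity $A+B=2T+2E$ with $T=\frac{(n+1)(n+4)}{2}$, and concluding $\min(A,B)\le\frac{A+B}{2}=T$ when $E=0$ --- is essentially the paper's argument (the paper phrases the $E=0$ case as: all terms inside the absolute values in both sums are nonnegative, so the minimum is at most the average). The disjointness of the high and low sets is also correct. The gap is in the case $E>0$: your conclusion there rests entirely on the inequality $|A-B|\ge 2E$, which you verify only for the single walk $\Delta_j=n-j$ and then assert that ``all other configurations are dominated by this extremal one.'' No domination argument is given, and none is obvious: writing $H=\{j:\Delta_j>k\}$ and $L=\{j:\Delta_j\le k\}$, one has
$$
B-A-2E=(n+4-2k)(1+|L|)+2\sum_{j\in L}\Delta_j+(n+4+2k)|H|-2\sum_{j\in H}\Delta_j,
$$
which depends on the entire walk and not just on its peak, so this step is genuinely unproved as written.

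The inequality you want is in fact true, but there is a one-line argument that makes the whole excess case trivial and that your $|A-B|\ge 2E$ detour obscures. Suppose the high set is non-empty, so $\Delta_{j^*}>k$ for some $j^*$. Set $\Delta_n:=0$ and $y_j:=\Delta_j-k$, so that $A=\sum_{j=0}^{n}|y_j|$. Since $y_n=-k\le 0$, $y_{j^*}>0$, the $y_j$ are integers and $|y_{j+1}-y_j|\le 1$, the sequence vanishes at some index $j_0$; hence $|y_j|\le|j-j_0|$ for all $j$ and
$$
A\le\sum_{j=0}^{n}|j-j_0|\le\binom{n+1}{2}\le T,
$$
with the low-excess case symmetric ($B$ in place of $A$). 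This is exactly how the paper disposes of the case where the walk leaves the band $[-(n+4-k),k]$, and it even yields $|A-B|=(A+B)-2\min(A,B)\ge 2E+4(n+1)$, confirming your claimed inequality a posteriori. As it stands, however, your write-up establishes the bound only when the walk stays inside the band, i.e.\ when $E=0$.
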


\begin{proof}

%{\bf Apply Lemma~\ref{gamma-distance} to shorten !!!}

By the lattice property and modularity of $R_n$
(Corollary~\ref{modular}) together with Claim~\ref{length-Rn} and
Corollary~\ref{join-meet}, for every $r=a_0^{\epsilon_0}\cdots
a_n^{\epsilon_n}, s=a_0^{\delta_0}\cdots a_n^{\delta_n}\in R_n$
\begin{equation}\label{up}
\dist_{\Gamma_n}(r,s)%\le \ell(r)+\ell(s)-2\ell(r\wedge s)
= \ell(r\vee s)-\ell(r\wedge s)=\sum\limits_{j=0}^n
|\sum\limits_{i=j}^n (\epsilon_i-\delta_i)|.
%\min \{ \ell(r)+\ell(s)-2 \ell(r\wedge s), 2\ell(r
%\vee s) -\ell(r)-\ell(s)\}. %\le \ell(r \vee s)-\ell(r\wedge s).
%\qquad (\forall r,s\in R_n).
\end{equation}

%Denote $\gamma_i:=\epsilon_i-\delta_i$. Then, by definition,
%$\gamma_i\in\{-1,0,1\}$ for $0\le i<n$.
\noindent If $\epsilon_n=\delta_n$ then
%$$
%\dist_{\Gamma_n}(r,s)= \dist (ra_n^{-\epsilon_n},
%sa_n^{-\epsilon_n}) \le \sum\limits_{j=0}^{n-1}
%|\sum\limits_{i=j}^{n} (\epsilon_i-\delta_i)|\le
%\sum\limits_{j=0}^{n-1} (n-j) = {n+1\choose 2}.
%$$
there exists $0\le i< n+4$ such that $r,s\in U_i$. Then by
(\ref{U_i-diameter}),
$$
\dist_{\Gamma_n}(r,s)\le {n+1\choose 2}.
$$
If $\epsilon_n\ne \delta_n$ then by Corollary~\ref{rotation0}, % estimating $\dist_{\Gamma_n}(r,s)$, where
%$r=a_0^{\epsilon_0}\cdots a_n^{\epsilon_n}$ and $s=a_0^{\delta_0}\cdots a_n^{\delta_n}$,
we may assume, without loss of generality, that $\delta_n=0$.
Also, by note that Corollary~\ref{rotation0},
$\dist_{\Gamma_n}(r,s)= \dist (ra_n^{-\epsilon_n},
sa_n^{-\epsilon_n}).$ Now, by Lemma~\ref{gamma-distance} together
with (\ref{up}) and the assumption $\delta_n=0$,

%$$
%\dist_{\Gamma_n}(r,s) \le \sum\limits_{j=0}^n
%|\epsilon_n+\sum\limits_{i=j}^{n-1} (\epsilon_i-\delta_i)|
%$$
%and also
%$$
%\dist_{\Gamma_n}(r,s)= \dist (ra_n^{-\epsilon_n}, sa_n^{-\epsilon_n})=
%$$
%$$
%= \dist (a_0^{\epsilon_0}\cdots a_{n-1}^{\epsilon_{n-1}},
%a_0^{\delta_0}\cdots a_{n-1}^{\delta_{n-1}}a_n^{n+4-\epsilon_n})
%\le \sum\limits_{j=0}^n |n+4-\epsilon_n+\sum\limits_{i=j}^{n-1}
%(\delta_i-\epsilon_i)|.
%$$
%Hence
\begin{equation}\label{gamma-distance-eq}
\dist_{\Gamma_n}(r,s) = \min\{ \sum\limits_{j=0}^n
|\epsilon_n+\sum\limits_{i=j}^{n-1} (\epsilon_i-\delta_i)|,
\sum\limits_{j=0}^n |n+4-\epsilon_n-\sum\limits_{i=j}^{n-1}
(\epsilon_i-\delta_i)|\}.
\end{equation}
For $0\le j\le n$ denote $x_j:=\epsilon_n+\sum\limits_{i=j}^n
(\epsilon_i-\delta_i) - \frac{n+4}{2}$. Then
$$
\dist_{\Gamma_n}(r,s) =
\min\{ \sum\limits_{j=0}^n |\frac{n+4}{2}+x_j|, 
       \sum\limits_{j=0}^n |\frac{n+4}{2}-x_j|\},
%\le \frac{1}{2} \sum\limits_{j=0}^n (|x_j|+|n+4-x_j|),
$$
where, by definition, $(i)$ $-\frac{n+4}{2}\le x_n< \frac{n+4}{2}$
and $(ii)$
$|x_{j+1}-x_j|\le 1$. %and $\frac{3n}{2}-2+j\le x_j\le \frac{3n}{2}+2-j$.

By $(i)$, $\frac{n+4}{2}+x_n\ge 0$. Combining this with $(ii)$
implies that if $\frac{n+4}{2}+x_j$ is negative for some $j$, then
there exists $0\le j_o\le n$, such that $\frac{n+4}{2}+x_{j_o}= 0$.
Then, by $(ii)$, for every $0\le j\le n$, $|\frac{n+4}{2}+x_j|\le
|j-j_o|$. Hence $\sum\limits_{j=0}^n
|\frac{n+4}{2}+x_j|\le %\sum\limits_{i=0}^n i=
{n+1\choose 2}$. So, we may assume that $\frac{n+4}{2}+x_j$ is
positive for all $0\le j\le n$. By a similar reasoning (regarding
the second sum), we may assume that $\frac{n+4}{2}-x_j$ is %are also
positive for all $0\le j\le n$. Thus
$$
\dist_{\Gamma_n}(r,s) =\min\{ \sum\limits_{j=0}^n \frac{n+4}{2}+x_j,
\sum\limits_{j=0}^n \frac{n+4}{2}-x_j\} \le %\frac{1}{2} \sum\limits_{j=0}^n (x_j+n+4-x_j)=
 \frac{(n+1)(n+4)}{2}.
$$
\end{proof}

%Now apply Corollary~\ref{join-meet}, .... ????

%Combining Corollary~\ref{rotation0} with (\ref{up}) and
%Corollary~\ref{join-meet}, one obtains that for every
%$r=a_0^{\epsilon_0}\cdots a_n^{\epsilon_n}$, and
%$s=a_0^{\delta_0}\cdots a_n^{\delta_n}$
%$$
%\dist_{\Gamma_n}(r,s)\le \min\{ \ell(r a_n^{-\epsilon_n})+\ell(s)-2 \ell(\
%(ra_n^{-\epsilon_n},s)\ ), \ell(r)+\ell(s a_n^{-\delta_n})-2
%\ell(\ (r,s a_n^{-\delta_n})\ )\}
%$$
%$$
%=????\le \frac{(n+1)(n+4)}{2}.
%$$

%One deduces

\subsubsection{Diameter: Lower Bound}

In this subsection we prove

\begin{lemma}\label{lower-bound}
Let $n\ge 3$. For every $r\in R_n$ there exists an element $s\in
R_n$ such that
$$
\dist_{\Gamma_n}(r,s)\ge \frac{(n+1)(n+4)}{2}.
$$
In particular,
$$
\diam(\Gamma_n)\ge \frac{(n+1)(n+4)}{2}.
$$
\end{lemma}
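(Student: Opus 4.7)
The plan is to exhibit, for each $r=a_0^{\epsilon_0}\cdots a_n^{\epsilon_n}\in R_n$, an explicit element $s\in R_n$ realizing $\dist_{\Gamma_n}(r,s)=\tfrac{(n+1)(n+4)}{2}$. I would first use Corollary~\ref{rotation0} to replace the pair $(r,s)$ by $(ra_n^{-\epsilon_n},\,sa_n^{-\epsilon_n})$ without changing the distance, reducing to the case $\epsilon_n=0$. Under this reduction, the formula derived in the proof of Lemma~\ref{upper-bound} (with the roles of $\epsilon_n$ and $\delta_n$ interchanged) becomes
$$
\dist_{\Gamma_n}(r,s)=\min\Bigl\{\sum_{j=0}^{n}|y_j-\delta_n|,\ \sum_{j=0}^{n}|y_j+(n+4)-\delta_n|\Bigr\},
$$
where $y_j:=\sum_{i=j}^{n-1}(\epsilon_i-\delta_i)$ and $y_n=0$. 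The key observation is that $|y_j-\delta_n|+|y_j+(n+4)-\delta_n|\ge n+4$, with equality exactly when $y_j\in[\delta_n-(n+4),\delta_n]$. So if the $y_j$'s stay close to $0$ and $\delta_n$ is close to $\tfrac{n+4}{2}$, both summands are small, their total is $(n+1)(n+4)$, and if the two sums are equal each is $\tfrac{(n+1)(n+4)}{2}$.

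For $n$ even, the witness is transparent: take $\delta_i=\epsilon_i$ for every $i<n$ and $\delta_n=\tfrac{n+4}{2}$. Then $y_j\equiv 0$ and both sums equal $(n+1)\cdot\tfrac{n+4}{2}$, as desired. For $n$ odd, the integer $\tfrac{n+4}{2}$ is unavailable, and this parity obstruction is the main technical hurdle of the proof. I would resolve it by flipping a single coordinate at the midpoint index $\tfrac{n-1}{2}$: keep $\delta_i=\epsilon_i$ for $i<n$ with $i\ne\tfrac{n-1}{2}$, set $\delta_{(n-1)/2}:=1-\epsilon_{(n-1)/2}$, and pair this flip with $\delta_n=\tfrac{n+3}{2}$ when $\epsilon_{(n-1)/2}=0$, or with $\delta_n=\tfrac{n+5}{2}$ when $\epsilon_{(n-1)/2}=1$.

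A direct calculation then shows that for this witness $y_j=\mp 1$ on the first $\tfrac{n+1}{2}$ indices $0\le j\le\tfrac{n-1}{2}$ and $y_j=0$ on the remaining $\tfrac{n+1}{2}$ indices $\tfrac{n+1}{2}\le j\le n$; both sums collapse to
$$
\frac{n+1}{2}\cdot\Bigl(\frac{n+3}{2}+\frac{n+5}{2}\Bigr)=\frac{(n+1)(n+4)}{2}.
$$
Hence $\dist_{\Gamma_n}(r,s)=\tfrac{(n+1)(n+4)}{2}$ in every case, and the lower bound $\diam(\Gamma_n)\ge\tfrac{(n+1)(n+4)}{2}$ follows immediately. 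The one non-routine step is the design of the odd-$n$ witness; everything else reduces to bookkeeping via Claim~\ref{length-Rn} and Corollary~\ref{join-meet} combined with Lemma~\ref{gamma-distance}.
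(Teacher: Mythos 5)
Your proof is correct, but it takes a genuinely different route from the paper's. You construct an explicit antipodal witness $s$ for each $r$ and evaluate the distance exactly, via the reduction of Corollary~\ref{rotation0} and the closed-form distance expression coming from Lemma~\ref{gamma-distance} together with equation~(\ref{up}); your witnesses check out (for even $n$ your $s=ra_n^{(n+4)/2}$ is precisely the rotation antipode of Proposition~\ref{aut2}, and your odd-$n$ midpoint-flip witness does make both sums equal to $\frac{(n+1)(n+4)}{2}$). The paper instead avoids the exact distance formula entirely: since $\Gamma_n$ is the graded Hasse diagram $\Sigma_n$ with extra edges $(v,va_{n-1}a_n^{n+3})$ joining elements whose lengths differ by $\ell(a_{n-1}a_n^{n+3})=(n+1)(n+4)-1$, every edge changes $\ell$ by either $1$ or $(n+1)(n+4)-1$, which gives the inequality $\dist_{\Gamma_n}(r,s)\ge\min\{d,\,(n+1)(n+4)-d\}$ with $d=|\ell(r)-\ell(s)|$; one then picks any $s$ with $d=\frac{(n+1)(n+4)}{2}$, which exists because $R_n$ is a graded interval of height $3\binom{n+2}{2}\ge\frac{(n+1)(n+4)}{2}$. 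The paper's argument is lighter and needs only the grading plus Proposition~\ref{graph-description}; yours is heavier machinery but buys more, namely explicit antipodal pairs realizing the diameter with equality (in the spirit of Propositions~\ref{aut1} and~\ref{aut2}), including a new witness in the odd case where the rotation antipode is unavailable.
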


\begin{proof}
Since the Hasse diagram $\Sigma_n$ on $R_n$ is graded by the
length function $\ell$, and since $\Gamma_n$ is obtained from the
$\Sigma_n$ by adding the edges $(v,va_{n-1}a_n^{n+3})$, for any
$v=a_0^{\epsilon_0}\cdots a_{n-2}^{\epsilon_{n-2}}$
(Proposition~\ref{graph-description}) it follows that for every
$r,s\in R_n$
\begin{equation}\label{lobound}
\dist_{\Gamma_n}(r,s)\ge \min\{ |\ell(s)-\ell(r)|,
\ell(a_{n-1}a_n^{n+3})+1-|\ell(s)-\ell(r)|\}.
\end{equation}
It follows that
$$
\diam(\Gamma_n)=\max\{\dist_{\Gamma_n}(r,s):\ r,s\in R_n\}
$$
$$
\ge \max\{ \min\{d, (n+1)(n+4)-d\}:\ 0\le d\le 3{n+2\choose 2}\},
$$
where $d:=|\ell(s)-\ell(r)|$, hence $0\le d\le \ell(w_o)={n\choose
2}+(n+3)(n+1)=3{n+2\choose 2}$. By Proposition~\ref{lattice}, for
any given $r\in R_n$, there exists an $s\in R_n$ of length
distance $\frac{(n+1)(n+4)}{2}$, completing the proof.

\end{proof}

\noindent Combining Lemma~\ref{upper-bound} with
Lemma~\ref{lower-bound} completes the proof of
Theorem~\ref{diameter}.

\qed

\subsection{Antipodes}\

%\subsection{Proofs of Propositions~\ref{aut1}-\ref{aut2}% and of
%Theorem~\ref{diameter2}
%}\

%Moreover,

%In this subsection it is shown that
%every colored triangle-free triangulation $T\in CTFT(n)$
%has ``antipodes" of flip-distance $\diam(\Gamma_n)$.
%\medskip

Let $\phi:
CTFT(n)\longrightarrow CTFT(n)$ denote the map which reverse the
coloring of a triangle free triangulation; namely each color $i$
is replaced by $n-i$. Clearly, $\phi$ is an automorphism of the
colored flip graph $\Gamma_n$.
Furthermore,

\begin{proposition}\label{aut1}
For every $T\in CTFT(n)$ the flip distance between $T$ and
$\phi(T)$ is equal to $\diam(\Gamma_n)$.
\end{proposition}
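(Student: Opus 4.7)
The plan is to compute $\dist_{\Gamma_n}(T,\phi(T))$ via the formula of Lemma~\ref{gamma-distance} and to show that the value is the constant $(n+1)(n+4)/2$, which by Theorem~\ref{diameter} equals $\diam(\Gamma_n)$.

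First, I will describe $\varphi(\phi(T))$ in terms of $\varphi(T)=(v_0,v_1,\ldots,v_n)$. Setting $R:=v_1+\cdots+v_n$, observe that $\phi$ leaves the underlying triangulation intact while interchanging the labels $i\leftrightarrow n-i$, so the chord labelled $0$ in $\phi(T)$ is the short chord labelled $n$ in $T$. Unwinding the inductive definition of $\varphi$, this short chord has centre $v_0+R+2\pmod{n+4}$. A parallel backward inspection of the extension rule shows that $\varphi(\phi(T))_i=1-v_{n-i+1}$ for $1\leq i\leq n$.

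Second, by Corollary~\ref{rotation0} I may rotate so that $v_0=0$. Then $T=rT_0$ and $\phi(T)=sT_0$ with $r=\prod_i a_i^{\epsilon_i}$ and $s=\prod_i a_i^{\delta_i}$ in $R_n$, where $\epsilon_i=v_{n-i}$ and $\delta_i=1-v_{i+1}$ for $i<n$, while $\epsilon_n=0$ and $\delta_n=R+2$. Combining Lemma~\ref{gamma-distance} with Claim~\ref{length-Rn} and Corollary~\ref{join-meet} yields $\dist_{\Gamma_n}(T,\phi(T))=\min\{A,B\}$, where
$$A=\sum_{j=0}^n\Bigl|\sum_{i=j}^n(\epsilon_i-\delta_i)\Bigr|$$
and $B$ is the analogous sum obtained after replacing $r,s$ by $ra_n^{-\delta_n},sa_n^{-\delta_n}$.

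The third and main step is the explicit evaluation of both $A$ and $B$. Write $S_k=\sum_{i\geq k}\epsilon_i$ and $T_k=\sum_{i\geq k}\delta_i$. For $k<n$, the quantity $|S_k-T_k|$ depends on whether $k\leq\lfloor(n-1)/2\rfloor$ (when the index ranges $\{k+1,\ldots,n-k\}$ arising from the two partial sums overlap) or $(n-1)/2<k<n$ (when they are disjoint modulo a gap); in either regime $|S_k-T_k|$ equals a constant $(n-k+2)$ plus a signed partial sum of the $v_j$. Summing over $k$ and interchanging the order of summation, a short case analysis according to whether $j\leq(n+1)/2$ or $j>(n+1)/2$ shows that the total coefficient of each $v_j$ vanishes, while the constant contribution aggregates to $(n+1)(n+4)/2$. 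A parallel computation for $B$ gives the same value, hence $\dist_{\Gamma_n}(T,\phi(T))=(n+1)(n+4)/2=\diam(\Gamma_n)$.

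The principal obstacle is the bookkeeping required in this final cancellation: for each $j\in\{1,\ldots,n\}$ one must collect contributions from the three regimes $k\leq(n-1)/2$, $(n-1)/2<k<n$, and $k=n$, and verify that the signs arrange so that the total coefficient of $v_j$ is always zero, independently of whether $j$ lies in the first or second half of $\{1,\ldots,n\}$. Once this cancellation is established, the conclusion is immediate from Lemma~\ref{gamma-distance} and Theorem~\ref{diameter}.
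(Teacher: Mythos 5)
Your argument is correct in outline, but it takes a genuinely different and substantially more laborious route than the paper. The paper never computes the distance $\dist_{\Gamma_n}(T,\phi(T))$ directly: it only computes the \emph{rank difference} $\ell(f^{-1}\phi(rT_0))-\ell(r)$, using Lemma~\ref{varphi} and Claim~\ref{length-Rn}, shows that this difference is exactly $\frac{(n+1)(n+4)}{2}$, and then invokes the crude graded lower bound (\ref{lobound}) together with the already established upper bound $\diam(\Gamma_n)=\frac{(n+1)(n+4)}{2}$ to sandwich the distance. This bypasses Lemma~\ref{gamma-distance}, the join/meet formula of Corollary~\ref{join-meet}, and all of the cancellation bookkeeping that forms the bulk of your third step. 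What your approach buys is an exact evaluation of the distance (both branches of the minimum in Lemma~\ref{gamma-distance} come out to $\frac{(n+1)(n+4)}{2}$, as I have checked: the constant terms $n-k+2$, resp.\ $k+2$, each sum to $\frac{(n+1)(n+4)}{2}$ and the coefficient of each $v_l$ is $-\min(l,n-l+1)+\min(l,n+1-l)=0$), and your first step effectively supplies a proof of Lemma~\ref{varphi}, which the paper states without proof. The one place where you must be careful is the translation from $\varphi(\phi(T))_0=v_0+R+2$ to the exponent $\delta_n$: the convention in the proof of Proposition~\ref{R_n} is $\varphi(rT_0)_0=-\epsilon_n \bmod (n+4)$, so the sign of $\delta_n$ depends on the orientation conventions, and the cancellation in your third step genuinely fails if $\delta_n$ is taken with the wrong sign; your value $\delta_n=R+2$ agrees with the paper's Lemma~\ref{varphi}, so the computation goes through, but this sign deserves an explicit check rather than being absorbed into ``a parallel backward inspection.''
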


To prove that we need the following Lemma.
Let $f$ be the natural bijection from $CTFT(n)$ to $R_n$: % Namely,
$f(T):=r$ if $rT_0=T$. Then

%Recall the reverse coloring automorphism
%$\phi:\Gamma_n\longrightarrow \Gamma_n$ from
%Section~\ref{Diameter}.

\begin{lemma}\label{varphi}
For every $r=a_0^{\epsilon_0}\cdots a_{n-1}^{\epsilon_{n-1}}
a_n^{\epsilon_n}$ with $\epsilon_i\in\{0,1\}$ $(0\le i< n)$ and
$0\le \epsilon_n < n+4$
$$
f^{-1} \phi
(rT_0)=\prod\limits_{i=0}^{n-1}a_i^{1-\epsilon_{n-1-i}} \cdot
a_n^m,
$$
where $m:=(%\epsilon_n+
2+\sum\limits_{i=0}^{n}\epsilon_i) ({\rm mod}\ n+4)$.
\end{lemma}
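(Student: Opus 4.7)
The plan is to pass through the bijection $\varphi: CTFT(n) \to \bbz_{n+4} \times \bbz_2^n$ and compute $\varphi(\phi(rT_0))$ in terms of $\varphi(rT_0)$, then invert using the explicit parameterization from the proof of Proposition~\ref{R_n}.

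First I would derive a closed formula expressing $\varphi(\phi(T))$ in terms of $\varphi(T) = (v_0,\ldots,v_n)$. Iterating the inductive rule defining $\varphi$, the chord labeled $j$ in $T$ is the unordered pair $[v_0 - k_j,\, v_0 + m_j]$ in $\bbz_{n+4}$ with $k_j + m_j = j + 2$, where $k_j-1$ and $m_j-1$ count the zeros and ones among $v_1,\ldots,v_j$ respectively. For $j = n$, since $k_n + m_n = n+2$, the chord labeled $n$ is the short chord whose single interior vertex lies on the opposite arc of the polygon, and a short calculation gives its center as $v_0 + m_n + 1 = v_0 + 2 + \sum_{i=1}^n v_i \pmod{n+4}$; this is $\varphi(\phi(T))_0$, since $\phi$ sends chord $n$ of $T$ to chord $0$ of $\phi(T)$. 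For $j \ge 1$, I would match the two descriptions of the same geometric chord (chord $n-j$ of $T$, equivalently chord $j$ of $\phi(T)$) as an unordered pair in $\bbz_{n+4}$; the cyclic structure forces the swapped pairing $b - k'_j \equiv v_0 + m_{n-j}$ and $b + m'_j \equiv v_0 - k_{n-j}$, because the short sides of chords $0$ and $n$ lie on opposite arcs. Solving then yields the bit formula $\varphi(\phi(T))_j = 1 - v_{n-j+1}$.

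The remaining step is substitution and inversion. The parameterization from the proof of Proposition~\ref{R_n} gives an explicit linear correspondence between the exponent vector of $r = a_0^{\epsilon_0} \cdots a_n^{\epsilon_n} \in R_n$ and the coordinates of $\varphi(rT_0) = (v_0,\ldots,v_n)$, matching $\epsilon_{n-j}$ to $v_j$ (with $\epsilon_n$ taken modulo $n+4$). Plugging the formulas derived in the previous paragraph into this identification yields $\varphi(\phi(rT_0))_0 = \epsilon_n + 2 + \sum_{i=1}^n \epsilon_{n-i} = 2 + \sum_{i=0}^n \epsilon_i = m$ and $\varphi(\phi(rT_0))_j = 1 - \epsilon_{j-1}$ for $j \ge 1$; reading off the representative produces the asserted expression $\prod_{i=0}^{n-1} a_i^{1-\epsilon_{n-1-i}} \cdot a_n^m$. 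The main obstacle I anticipate is the bookkeeping in the second paragraph: one must correctly justify the orientation swap induced by traversing the inductive coloring of $\phi(T)$ in reverse, which is the geometric source of both the bit complementation $1 - v_{n-j+1}$ and the additive shift by $2 + \sum v_i$ in the short-chord index.
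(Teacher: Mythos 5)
The paper states Lemma~\ref{varphi} without proof, so there is no argument of the authors to compare against; your overall plan --- compute $\varphi(\phi(T))$ in terms of $\varphi(T)$, then translate back to exponent vectors --- is surely the intended one. Your first paragraph is correct and is the real content of the lemma: the chord labeled $j$ in $T$ is $[v_0-k_j,\,v_0+m_j]$ with $m_j=1+\sum_{i=1}^{j}v_i$, the chord labeled $n$ is short with center $v_0+m_n+1=v_0+2+\sum_{i=1}^n v_i$, and matching chord $n-j$ of $T$ with chord $j$ of $\phi(T)$ forces the swapped pairing $k'_j=m_n+1-m_{n-j}$, $m'_j=k_n+1-k_{n-j}$ (the unswapped one would give $k'_j+m'_j=n-j+2$ instead of $j+2$), whence $\varphi(\phi(T))_j=1-v_{n-j+1}$. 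All of this checks out.

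The gap is in the translation step. The identification you import from the proof of Proposition~\ref{R_n} --- $v_j=\epsilon_{n-j}$ for $j\ge 1$ together with ``$\epsilon_n$ matched to $v_0$'' --- is not correct, and you have also silently dropped the minus sign that the paper's formula actually carries (it reads $a_n^{-v_0\bmod(n+4)}$); taken literally with that sign, your substitution would give $m=\epsilon_n-2-\sum_{i<n}\epsilon_i$, not the stated value. In fact neither version is right. Since $s_0$ can only alter coordinates $0$ and $1$ of $\varphi$, one has $\varphi(a_0T_0)=\varphi(s_0T_0)=(-1,1,0,\dots,0)$, not $(0,\dots,0,1)$ (flipping the chord $[1,n+3]$ of the star at vertex $1$ produces $[0,n+2]$, centered at $n+3=-1$; note the $j=0$ clause of Observation~\ref{action-on-vectors} has its two signs interchanged relative to this). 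Tracking the flips shows that each factor $a_j$ $(j<n)$ inserts a $1$ at position $j+1$ and shifts $v_0$ by $-1$, so that $\varphi(a_0^{\epsilon_0}\cdots a_n^{\epsilon_n}T_0)=\bigl(-\sum_{j=0}^{n}\epsilon_j,\ \epsilon_0,\dots,\epsilon_{n-1}\bigr)$: the bits appear in \emph{unreversed} order, and \emph{every} $a_j$, not only $a_n$, shifts the $\bbz_{n+4}$-coordinate. Your derivation lands on the stated formula only because two errors cancel: the spurious reversal cancels against the genuine reversal effected by $\phi$, and the omitted $-\sum_{j<n}\epsilon_j$ is absorbed via $-(n+2)\equiv 2\pmod{n+4}$. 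To repair the proof, first establish the displayed formula for $\varphi(rT_0)$ by induction on the factors $a_j$, then substitute into your paragraph one: this yields $\epsilon'_i=1-\epsilon_{n-1-i}$ for $i<n$ and $\epsilon'_n=\sum_{i}\epsilon_i-(n+2)\equiv 2+\sum_{i}\epsilon_i \pmod{n+4}$, which is exactly the lemma.
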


\medskip

%\begin{claim}\label{length-Rn}
%For every $r=a_0^{\epsilon_0}\cdots a_{n-1}^{\epsilon_{n-1}}
%a_n^{\epsilon_n}$ with $\epsilon_i\in\{0,1\}$ $(0\le i< n)$ and
%$0\le \epsilon_n < n+4$
%$$
%\ell(r)=\sum\limits_{i=0}^{n-1}  (i+1)\epsilon_i +
%(n+1)\epsilon_n.
%$$
%\end{claim}

\noindent{\bf Proof of Proposition~\ref{aut1}.}
%For every $r\in
%R_n$ denote $f(r):=r T_0$. By ??? $f$ is a bijection from $R_n$ to
%$CTFT(n)$. We claim that
%for $r=a_0^{\epsilon_0}\cdots a_{n-1}^{\epsilon_{n-1}}
%a_n^{\epsilon_n}$
%$$
%f^{-1} \varphi (rT_0)=a_0^{\epsilon_0}\cdots
%a_{n-1}^{\epsilon_{n-1}} a_n^?.
%$$
%Now, it suffices to show that for every $r\in R_n$
By Corollary~\ref{rotation0}, we may assume that $\epsilon_n=0$.
Then by Lemma~\ref{varphi},
%for every $r=a_0^{\epsilon_0}\cdots
%a_{n-1}^{\epsilon_{n-1}} a_n^0\in R_n$
$ f^{-1} \phi (rT_0)=a_0^{1-\epsilon_{n-1}}\cdots
a_{n-1}^{1-\epsilon_{0}} a_n^m$,
where $m=%\epsilon_n+
2+\sum\limits_{i=0}^{n-1}\epsilon_i$.
% By (\ref{lobound}), $
%\dist_{\Gamma_n}(f^{-1} \phi (rT_0),r)\ge \min\{ |\ell(s)-\ell(r)|,
%\ell(a_{n-1}a_n^{n+3})+1-|\ell(s)-\ell(r)|\}$, where $s= f^{-1}
%\phi (rT_0)$.
By %the grading of the Hasse diagram and
Claim~\ref{length-Rn},
$$
\ell(f^{-1} \phi
(rT_0))-\ell(r)=(2+\sum\limits_{i=0}^{n-1}\epsilon_i)(n+1)+\sum\limits_{i=0}^{n-1}
(i+1)(1-\epsilon_{n-1-i})- \sum\limits_{i=0}^{n-1} (i+1)\epsilon_i
$$ $$ = 2(n+1) + \sum\limits_{i=0}^{n-1}(i+1)+
\sum\limits_{i=0}^{n-1}\epsilon_i ((n+1)-(i+1)-(n-i))= \frac{(n+1)(n+4)}{2}.
$$
Hence by %the lower bound for the distance
(\ref{lobound}), $\dist_{\Gamma_n}(f^{-1} \phi (rT_0),r)\ge \frac{(n+1)(n+4)}{2}$, 
so it is equal to %$\frac{(n+1)(n+4)}{2}$, which is
 the diameter. \qed

\bigskip

%Another natural automorphism of $\Gamma_n$ is the a rotation by
%$\frac{2\pi k}{n}$ with respect to the center of the polygon $P_{n+4}$.
Another antipode may be obtained by rotation. For an even $n$ let
$\psi$ denote the rotation of a triangle free triangulation $T\in
CTFT(n)$ by $\pi$ with respect to the center of $P_{n+4}$. Then

\begin{proposition}\label{aut2}
For every even $n$ and $T\in CTFT(n)$ the flip distance between $T$
and $\psi(T)$ is equal to $\diam(\Gamma_n)$.
\end{proposition}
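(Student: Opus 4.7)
The plan is to mirror the proof of Proposition~\ref{aut1}, replacing the colour-reversal involution $\phi$ by the geometric rotation $\psi$, and to identify $\psi$ on the level of coset representatives $R_n$ with right multiplication by $a_n^{(n+4)/2}$. The hypothesis that $n$ is even enters in precisely one place: it guarantees that $(n+4)/2$ is an integer, so that rotation by $\pi=((n+4)/2)\cdot(2\pi/(n+4))$ is a well-defined symmetry of the labelled polygon $P_{n+4}$, and hence the $(n+4)/2$-fold iterate of the unit rotation supplied by Observation~\ref{aut0}.

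Concretely, I would fix $T=rT_0$ with $r=a_0^{\epsilon_0}\cdots a_{n-1}^{\epsilon_{n-1}}a_n^{\epsilon_n}\in R_n$. Iterating Observation~\ref{aut0} exactly $(n+4)/2$ times then identifies $\psi(T)=r'T_0$, where $r'=a_0^{\epsilon_0}\cdots a_{n-1}^{\epsilon_{n-1}}a_n^{(\epsilon_n+(n+4)/2)\bmod(n+4)}\in R_n$. By Corollary~\ref{rotation0} I may right-multiply both representatives by $a_n^{-\epsilon_n}$ without changing the flip-distance, reducing to the case $\epsilon_n=0$ and $r'=r\cdot a_n^{(n+4)/2}$. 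Claim~\ref{length-Rn} then immediately gives $\ell(r')-\ell(r)=(n+1)(n+4)/2$.

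To finish I would invoke the lower bound (\ref{lobound}) from the proof of Lemma~\ref{lower-bound}, with $d:=|\ell(r')-\ell(r)|=(n+1)(n+4)/2$ and $\ell(a_{n-1}a_n^{n+3})+1=n+(n+1)(n+3)+1=(n+1)(n+4)$. Both arguments of the minimum in (\ref{lobound}) then collapse to $(n+1)(n+4)/2$, giving $\dist_{\Gamma_n}(T,\psi(T))\ge (n+1)(n+4)/2$, which by Theorem~\ref{diameter} is exactly $\diam(\Gamma_n)$, forcing equality. The only mildly delicate step is the first one, namely writing $\psi$ as $(n+4)/2$ iterations of the unit rotation of Observation~\ref{aut0}; this is immediate from the geometric content of that observation together with the parity assumption, and the remainder of the argument is routine arithmetic already carried out in the proofs of Lemma~\ref{lower-bound} and Proposition~\ref{aut1}.
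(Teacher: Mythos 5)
Your proposal is correct and follows essentially the same route as the paper's own proof: identify $\psi$ with right multiplication by $a_n^{(n+4)/2}$ via Observation~\ref{aut0}, reduce to $\epsilon_n=0$, compute the length gap $(n+1)(n+4)/2$ with Claim~\ref{length-Rn}, and conclude with the lower bound (\ref{lobound}) and Theorem~\ref{diameter}. The only difference is cosmetic — you spell out the reduction to $\epsilon_n=0$ via Corollary~\ref{rotation0}, where the paper simply says ``without loss of generality.''
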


\begin{proof}
%\noindent{\bf Proof of Proposition~\ref{aut2}.}
%With the notations
%of the previous proof
Without loss of generality, $r=f^{-1}(T)=a_0^{\epsilon_0}\cdots
a_{n-1}^{\epsilon_{n-1}}$.  By the proof of
Observation~\ref{aut0}, for every $r\in R_n$ $ f^{-1} \psi (rT_0)=
r a_n^{(n+4)/ 2}.$ Hence, by Claim~\ref{length-Rn},
$$
\ell(f^{-1} \psi (rT_0))-\ell(r) = \frac{n+4}{2}(n+1).
$$
Combining this with (\ref{lobound}) yields
$$
\dist_{\Gamma_n}(f^{-1} \psi (rT_0),r)\ge \frac{n+4}{2}(n+1)= \diam(\Gamma_n).
$$

\end{proof}

\section{Final Remarks}

%\subsection{Sign}\ \\

The colored flip-graph is bipartite; the bipartition is fixed by
the parity of the corresponding elements in $R_n$.
%Colored triangle free triangulations carry a natural sign.

Recall the natural bijection $f: CTFT(n) \longrightarrow  R_n$, defined by % Namely,
$f(T):=r$ if $rT_0=T$.

\begin{defn} For every $T\in CTFT(n)$ associate a sign
$$
{\rm sign}(T):=(-1)^{\ell(f(T))}.
$$
%where $f: CTFT(n) \longrightarrow  R_n$ is the natural bijection
%defined in  Subsection~\ref{??}.
%defined by $
%f(a_0^{\epsilon_0}\cdots a_n^{\epsilon_n})
%:=\varphi^{-1}(\epsilon_n,\dots,\epsilon_0).$
\noindent A triangulation $T\in CTFT(n)$ is {\rm even}
%triangulation}
if ${\rm sign}(T)=1$ and {\rm odd} otherwise.
\end{defn}

\begin{proposition}
The graph $\Gamma_n$ is bipartite; the flip operation changes the
sign.
\end{proposition}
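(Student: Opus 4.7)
The plan is to verify the sign statement directly on each edge of $\Gamma_n$, since bipartiteness is an immediate consequence: if every edge inverts $\mathrm{sign}$, then $\mathrm{sign}$ provides a proper $2$-coloring of the graph.

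By Proposition~\ref{graph-description} the edges of $\Gamma_n$, viewed on $R_n$, are of exactly two kinds. First, the Hasse-diagram edges of the left weak order on $R_n$: pairs $(r, s_i r)$ with both $r, s_i r \in R_n$. Second, the ``extra'' edges $(v, v a_{n-1} a_n^{n+3})$ where $v = a_0^{\epsilon_0} \cdots a_{n-2}^{\epsilon_{n-2}}$. I will check that each kind changes the parity of the Coxeter length, so that $\mathrm{sign}(T) = (-1)^{\ell(f(T))}$ flips along every edge.

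For the first kind the computation is immediate: $s_i r$ differs from $r$ by one Coxeter generator, and since $R_n$ is an interval in the weak order (Proposition~\ref{lattice}), $\ell(s_i r) = \ell(r) \pm 1$, so the parities differ. For the second kind I would apply Claim~\ref{length-Rn} to both $v$ and $v a_{n-1} a_n^{n+3}$. Writing $v = a_0^{\epsilon_0} \cdots a_{n-2}^{\epsilon_{n-2}} a_{n-1}^{0} a_n^{0}$, the element $v a_{n-1} a_n^{n+3}$ has the same exponents in positions $0, \dots, n-2$, exponent $1$ in position $n-1$ and exponent $n+3$ in position $n$. Therefore
\[
\ell(v a_{n-1} a_n^{n+3}) - \ell(v) \;=\; n \cdot 1 + (n+1)(n+3) \;=\; n^{2} + 5n + 3.
\]
Since $n(n+5)$ is always even, $n^{2} + 5n + 3$ is odd, so the length parities on the two endpoints of such an extra edge also differ.

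The only step that requires any actual thought is the parity computation $n^{2} + 5n + 3 \equiv 1 \pmod{2}$ for the ``long'' edges; this is where one might worry about the construction breaking for special $n$, but it holds uniformly. Once both edge-types are handled, $\mathrm{sign}$ is constant on each part of a bipartition and takes opposite values on adjacent vertices, which simultaneously proves that $\Gamma_n$ is bipartite and that the flip operation reverses the sign.
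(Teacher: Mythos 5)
Your proof is correct and follows essentially the same route as the paper: both reduce to the edge description of Proposition~\ref{graph-description}, observe that Coxeter-generator edges change length by $\pm 1$, and check that the quotient $a_{n-1}a_n^{n+3}$ of the extra edges has odd length $n+(n+1)(n+3)=n^2+5n+3$. Your use of Claim~\ref{length-Rn} to compute that length difference explicitly is a slightly more detailed justification of the same parity fact the paper states directly.
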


\begin{proof}
%\begin{defn}
%\end{defn}
By Proposition~\ref{graph-description}, the vertices of $\Gamma_n$
may be identified with elements in $R_n$, where
%. The elements in $R_n$
%are naturally divided by their parity (i.e., parity of their
%Coxeter length). By Proposition~\ref{graph-description},
for every pair $r,s\in R_n$ $(r,s)$ is an edge in the colored flip
graph if and only if $rs^{-1}$ is a Coxeter generator  or equals
to $(a_{n-1}a_n^{n+3})^{\pm 1}$. Notice that for every $n$ the
length $\ell(a_{n-1}a_n^{n+3})=n+(n+1)(n+3)$ is odd. We conclude
that if $(r,s)$ is an edge then $r$ and $s$ differ by the parity
of their Coxeter length.
\end{proof}

\begin{proposition}
The number of even triangulations is equal to the number of odd
triangulations.
\end{proposition}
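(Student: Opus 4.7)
The plan is to transfer the statement to $R_n$ via the bijection $f\colon CTFT(n)\to R_n$. Under $f$, the definition of sign becomes $\mathrm{sign}(T)=(-1)^{\ell(f(T))}$, so the claim reduces to showing that $R_n$ has equally many elements of even and of odd Coxeter length.

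The cleanest route is to exhibit a sign-reversing, fixed-point-free involution on $R_n$. Define $\iota\colon R_n\to R_n$ by toggling the exponent of $a_0$:
$$
\iota\bigl(a_0^{\epsilon_0} a_1^{\epsilon_1} \cdots a_n^{\epsilon_n}\bigr):= a_0^{1-\epsilon_0} a_1^{\epsilon_1} \cdots a_n^{\epsilon_n}.
$$
Since $\epsilon_0\in\{0,1\}$ and $\iota$ preserves the ranges of all the other exponents, it is well defined as a map $R_n\to R_n$, is clearly an involution, and has no fixed points. By Claim~\ref{length-Rn}, $\ell(r)=\sum_{j=0}^{n}(j+1)\epsilon_j$, and toggling $\epsilon_0$ changes this sum by exactly $\pm 1$. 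Hence $\iota$ pairs each element of even length with an element of odd length, yielding $|R_n^{\mathrm{even}}|=|R_n^{\mathrm{odd}}|$, and transporting back via $f$ gives the proposition.

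As a cross-check, one may alternatively substitute $q=-1$ into the rank generating function recorded in the Remark following Proposition~\ref{lattice},
$$
(1+q)(1+q^2)\cdots(1+q^n)\bigl(1+q^{n+1}+q^{2(n+1)}+\cdots+q^{(n+3)(n+1)}\bigr),
$$
whose first factor vanishes, so the signed count $|R_n^{\mathrm{even}}|-|R_n^{\mathrm{odd}}|$ is zero.

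There is essentially no obstacle here; the combinatorial model $R_n$ built in the earlier sections, together with the explicit length formula, does all the work. The only point worth double-checking is that $f$ really intertwines the sign on $CTFT(n)$ with $(-1)^{\ell(\cdot)}$ on $R_n$, which is immediate from the definition of $\mathrm{sign}$.
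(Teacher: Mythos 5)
Your involution $\iota$ is exactly left multiplication by $s_0$ (since $a_0=s_0$ and $s_0^2=1$), which is precisely the map the paper uses; the length-parity flip via Claim~\ref{length-Rn} is the same justification. The proposal is correct and essentially identical to the paper's proof, with the generating-function substitution $q=-1$ as a harmless extra cross-check.
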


\begin{proof}
By definition of the sign, it suffices to show that there exists
an invertible map from $R_n$ to itself, which changes the parity
of the length. A left multiplication by $s_0$ %(or by $s_n$)
is such a map.
\end{proof}

%\subsection{The Stabilizer}\ \\

\bigskip

Hereby we mention (without proofs) some %remarkable
properties of the stabilizer $St_n$.

\begin{proposition}
The stabilizer $St_n$ is isomorphic to the direct product
$\widetilde{A}_n\otimes \bbz$.
\end{proposition}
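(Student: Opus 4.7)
The plan is to analyze $St_n$ via its geometric action on $\bbr^n = H \oplus H^\perp$ from the proof of Theorem~\ref{t.stabilizer}, where $H = \{x\in\bbr^n : \sum_i x_i = 0\}$ and $H^\perp = \bbr\cdot(1,\ldots,1)$. I would first verify that $St_n$ preserves this orthogonal decomposition and acts block-diagonally: the reflections $g_0, s_1, \ldots, s_{n-1}$ fix $H^\perp$ pointwise and generate a copy of $\widetilde{A}_{n-1}$ on $H$ with fundamental region $\Delta_{n-1}$ (Observation~\ref{t.Delta_n-1}), while $g_n = (s_n \cdots s_0)^{n+4}$ acts on $H$ as a symmetry of $\Delta_{n-1}$ (a cyclic rotation of its vertices $w_0,\ldots,w_{n-1}$) and on $H^\perp$ as translation by $(n+4)e$ with $e = (2/n)(1,\ldots,1)$. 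This block structure is the heart of the decomposition.

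The direct product would then be extracted as follows. First, identify the central $\bbz$-factor $Z$: a suitable power $g_n^k$ acts trivially on $H$ (once the cyclic rotation on $\Delta_{n-1}$'s vertices returns to the identity) and is then a pure $H^\perp$-translation, so it is central in $St_n$ because every generator either fixes $H^\perp$ pointwise or translates it commutatively. Second, identify the $\widetilde{A}_n$-factor as the subgroup whose action on $H^\perp$ is trivial, enlarged from $\langle g_0, s_1, \ldots, s_{n-1}\rangle \cong \widetilde{A}_{n-1}$ by a lift of the diagram rotation supplied by $g_n$; abstractly it is the extended affine Weyl group of type $\widetilde{A}_{n-1}$, which is presumably what the paper denotes $\widetilde{A}_n$. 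The fundamental region calculation $Fund_2 = \Delta_{n-1} \times I$ (Observation~\ref{t.Fund_2}) provides compelling geometric evidence that these two commuting pieces exhaust $St_n$. One then checks elementwise commutation (each factor acts trivially on the invariant subspace of the other), trivial intersection (one acts faithfully on $H$, the other trivially), and that together they generate $St_n$ by expressing $g_n$ as (element of $\widetilde{A}_n$)$\cdot$(element of $Z$).

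The main obstacle is the splitting of the extension $1 \to A \to St_n \to \bbz \to 1$ with $A := \langle g_0, s_1, \ldots, s_{n-1}\rangle \cong \widetilde{A}_{n-1}$: the quotient is generated by the image of $g_n$, but $g_n$ acts on $A$ by conjugation as a cyclic rotation of its generators, an outer diagram automorphism of $\widetilde{A}_{n-1}$. So $St_n$ is naively only a semi-direct product $A \rtimes \bbz$, and to obtain an honest direct product one must choose the central generator of $Z$ to be a power $g_n^{n/\gcd(n,4)}$ (annihilating the rotation on $H$) and enlarge $A$ to an extended affine Weyl group containing the diagram rotation (whose $H^\perp$-translation contribution gets absorbed into $Z$). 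Once these adjustments are made, verifying commutation and intersection on generators is routine and yields the isomorphism $St_n \cong \widetilde{A}_n \otimes \bbz$.
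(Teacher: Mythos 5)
The paper states this proposition in its closing remarks explicitly \emph{without proof}, so there is no argument of the authors' to measure yours against; I can only assess your proposal on its own terms. Your geometric setup is correct and consistent with the data in the proof of Theorem~\ref{t.stabilizer}: $St_n$ acts block-diagonally on $H\oplus H^{\perp}$; the subgroup $A:=\langle g_0,s_1,\dots,s_{n-1}\rangle\cong\widetilde{A}_{n-1}$ fixes $H^{\perp}$ pointwise; $g_n=(s_n\cdots s_0)^{n+4}$ acts on $H$ as the diagram rotation $\rho^{n+4}=\rho^{4}$ of $\Delta_{n-1}$ (where $\rho$ is the one-step rotation, of order $n$) and on $H^{\perp}$ as translation by $(n+4)e$; and the center of $St_n$ is $\langle g_n^{k_0}\rangle$ with $k_0=n/\gcd(n,4)$. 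You also correctly locate the obstacle: $St_n\cong A\rtimes_{\rho^{4}}\bbz$ with $\rho^{4}$ an outer diagram automorphism of $A$.

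The gap is the final ``absorption'' step, and I do not see how to repair it. The element you need --- one acting on $H$ as a nontrivial rotation of $\Delta_{n-1}$ and trivially on $H^{\perp}$ --- does not exist in $St_n$: every element is $a g_n^{k}$ with $a\in A$, its $H$-action lies in the coset $W\rho^{4k}$ (where $W=A|_H$ acts simply transitively on alcoves), so acting on $H$ as a nontrivial alcove-preserving rotation forces $4k\not\equiv 0\pmod n$, hence a nonzero translation $k(n+4)e$ of $H^{\perp}$. More decisively: since $A^{\mathrm{ab}}\cong\bbz_2$ one gets $St_n^{\mathrm{ab}}\cong\bbz_2\times\bbz$, so up to sign there is a unique surjection $St_n\to\bbz$ and its kernel is exactly $A$. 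Hence in any decomposition $St_n\cong\Gamma\times\bbz$ the factor $\Gamma$ would have to be $A\cong\widetilde{A}_{n-1}$ itself (not an extended affine Weyl group), and the $\bbz$ would have to be a \emph{central} complement to $A$; but the center $\langle g_n^{k_0}\rangle$ maps onto $k_0\bbz\subsetneq\bbz\cong St_n/A$, so such a complement exists only when $k_0=1$, i.e. $n\mid 4$. What your argument genuinely establishes is $St_n\cong\widetilde{A}_{n-1}\rtimes\bbz$, the semidirect product by the $4$-step diagram rotation (equivalently, a fiber product of the extended affine Weyl group with $\bbz$ over $\bbz_{k_0}$); turning this into an honest direct product with a $\bbz$ factor is not a matter of rechoosing generators, and the statement as literally written (with $\widetilde{A}_n$ and a direct product) cannot be reached along this route --- indeed $\widetilde{A}_n$ contains $\sym_{n+1}$, while every finite subgroup of $St_n$ already lies in $A$ and so has order at most $n!$.
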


\medskip

Even though $St_n$ is not a parabolic subgroup of
$\widetilde{C}_n$ the following remarkable property holds.

\begin{proposition}
Every coset of $St_n$ in $\widetilde{C}_n$ has a unique shortest
representative.
\end{proposition}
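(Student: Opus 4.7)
The plan is to reduce the uniqueness to a descent-set analysis and then to a length-additivity statement for the ``translation part'' of $St_n$.

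Observe first that if $g$ is a shortest representative of a left coset $c = g\cdot St_n$, then for each Coxeter generator $s_i$ with $1 \le i \le n-1$ we must have $\ell(g s_i) > \ell(g)$: indeed, $s_i \in St_n$, so $g s_i \in c$, and if $\ell(gs_i) < \ell(g)$ then $g$ would not be shortest. Hence the right descent set $D_R(g)$ is disjoint from $\{s_1,\ldots,s_{n-1}\}$, so that $g$ is the unique minimum-length representative of its coset of the finite parabolic subgroup $W_J := \langle s_1,\ldots,s_{n-1}\rangle \cong \sym_n$. Denote the corresponding set of minimum-length representatives by $W^J$, so $g \in c \cap W^J$.

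Next, I would parametrize $c \cap W^J$ using the structure $St_n \cong \widetilde{A}_{n-1}\otimes\bbz$ stated in the preceding proposition. Writing $St_n = W_J \cdot K$ for a suitable system $K$ of right $W_J$-coset representatives inside $St_n$ (which, under the $\widetilde{A}_{n-1}\otimes\bbz$ isomorphism, can be chosen to consist of ``translation-type'' elements), each element of $c$ has the form $g w k$ with $w \in W_J$, $k\in K$. A standard Coxeter-theoretic argument shows that for each $k$ there is exactly one $w$ making $g w k \in W^J$; hence $c \cap W^J$ is in natural bijection with $K$.

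Finally, show that the lengths of the elements of $c \cap W^J$ are pairwise distinct by invoking the length-additivity of translations in the affine Weyl group $\widetilde{C}_n$: distinct $k_1,k_2\in K$ correspond to distinct translations, and the standard formula $\ell(t_\lambda) = \sum_{\alpha\in\Phi^+}|\langle\lambda,\alpha\rangle|$ for translation lengths, combined with length-additivity of reduced factorizations past $W_J$, guarantees that the corresponding elements of $c \cap W^J$ receive pairwise distinct length-increments. Hence the minimum length in $c \cap W^J$, and therefore in $c$, is attained at a unique element.

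The main obstacle is the last step: identifying $K$ concretely as a set of genuine translations in $\widetilde{C}_n$ so that the length formula applies. This requires an explicit geometric computation in the action of $\widetilde{C}_n$ on $\bbr^n$ from the proof of Theorem~\ref{t.stabilizer}; there $g_n$ acts as a translation while $g_0$ is a reflection, so the ``translation part'' of $St_n$ within $\widetilde{C}_n$ must be pinned down via products such as $g_0\cdot s_0\cdots s_{n-1}\cdot s_n\cdots s_1 \cdot s_0$ and the cyclic-shift piece of $g_n$, and the relevant products must be shown to yield honest translations whose translation lengths are additive when composed with elements of $W^J$.
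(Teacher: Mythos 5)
The paper states this proposition in its ``Final Remarks'' explicitly \emph{without proof}, so there is no argument of the authors' to compare yours against; I can only judge the proposal on its own terms. Your first reduction is sound: a shortest element $g$ of $c=g\,St_n$ satisfies $\ell(gs_i)>\ell(g)$ for $1\le i\le n-1$ because $s_i\in St_n$, so $g$ lies in the set $W^J$ of minimal representatives of left cosets of $W_J=\langle s_1,\dots,s_{n-1}\rangle$, and the problem becomes uniqueness of the minimum over $c\cap W^J$. The parametrization of $c\cap W^J$ by a rank-$n$ lattice of translations is also morally right, although you should decompose $St_n=K\cdot W_J$ into \emph{left} $W_J$-cosets $kW_J$ (so that $c=\bigsqcup_k gkW_J$ and each piece meets $W^J$ once); with your order $gwk$ the ``exactly one $w$'' claim is not the standard parabolic factorization.

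The genuine gap is the final step, which you yourself flag as the ``main obstacle'' and which, as designed, cannot work: you propose to conclude by showing that the elements of $c\cap W^J$ have \emph{pairwise distinct} lengths. That is false for $n\ge 2$. The Poincar\'e series of $W^J$ is $\widetilde{C}_n(q)/W_J(q)$, whose coefficients grow like $\ell^{\,n-1}$, while the number of cosets of $St_n$ is the fixed number $(n+4)2^n$; by pigeonhole some coset contains two elements of $W^J$ of equal length, and indeed the level sets of the piecewise-linear function $\lambda\mapsto \ell\bigl((g\,t_\lambda)^J\bigr)$ on a rank-$n$ lattice generically contain many points. What is actually needed is only that the \emph{minimum} is attained once, and that requires a different mechanism --- e.g.\ a convexity/strictness argument near the minimizing alcove, or, more in the spirit of the paper, a direct verification that the explicit set $\hat R_n$ of Proposition~\ref{Rn-list} consists of the unique shortest representatives, using the length formula of Claim~\ref{length-Rn} and the descent analysis of Lemma~\ref{graph-description0} and Corollary~\ref{left-descents}. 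As it stands the proposal is an outline whose decisive step is both unexecuted and based on a false premise.
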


%Note that the
The set of shortest representatives %is {\bf not} $R_n$ but
may be
constructed from $R_n$ by a slight modification. Let $B(n,r):=
\{\pi\in \widetilde{C}_n:\ \ell(\pi)\le r\}$ be the ball of radius
$r$ in $\widetilde{C}_n$.

\begin{proposition}\label{Rn-list}
The set
$$
\hat R_n := \left(R_n\cap B(n,\frac{(n+1)(n+4)}{2})\right) \bigcup
\left(R_n\setminus B(n,\frac{(n+1)(n+4)}{2})\right) g_n^{-1}
$$
forms a complete list of shortest representatives of the left
cosets of $St_n$ in $\widetilde{C}_n$.
\end{proposition}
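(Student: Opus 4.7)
The plan is to combine Theorem~\ref{diameter} with the distance formula of Lemma~\ref{gamma-distance} and the uniqueness statement preceding this proposition, so as to identify explicitly the shortest representative of each $St_n$-coset either as an element of $R_n$ or as one of the form $rg_n^{-1}$.

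First I would observe that $\hat R_n$ already contains exactly one representative from each left $St_n$-coset. Indeed, since $g_n\in St_n$, right multiplication by $g_n^{-1}$ preserves cosets; hence replacing some $r\in R_n$ by $rg_n^{-1}$ keeps $\hat R_n$ a complete list of coset representatives by Proposition~\ref{R_n}. It therefore suffices to show that each element of $\hat R_n$ is a shortest representative of its coset.

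The central observation is that, since $\Gamma_n$ is the Schreier graph of the left $St_n$-cosets with respect to $\{s_0,\dots,s_n\}$, the length of the shortest representative of $rSt_n$ equals $\dist_{\Gamma_n}(r,\mathrm{id})$. Specialising formula~(\ref{gamma-distance-eq}) to $s=\mathrm{id}$ (all $\delta_i=0$) collapses the first summand to $\ell(r)$, and the second becomes $L(r):=\sum_{j=0}^n|n+4-\epsilon_n-\sum_{i=j}^{n-1}\epsilon_i|$; thus the shortest representative has length $\min\{\ell(r),L(r)\}$. The key technical step is to identify the second candidate as $\ell(rg_n^{-1})$: using Corollary~\ref{join-meet} to compute the meet and join of $ra_n^{-\epsilon_n}$ and $a_n^{-\epsilon_n}$ inside the modular lattice $R_n$, the difference $\ell(\vee)-\ell(\wedge)$ evaluates exactly to $L(r)$, and tracing the corresponding ``reverse cycle'' path in $\Gamma_n$ furnished by the second term of Lemma~\ref{gamma-distance} shows that it is realised precisely by the coset representative $rg_n^{-1}$.

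Granted this identification, the shortest representative of $rSt_n$ is simply the shorter of $r$ and $rg_n^{-1}$, and it remains to match the threshold $\ell(r)\le(n+1)(n+4)/2$ used to define $\hat R_n$. In the generic regime $\epsilon_n+\sum_{i<n}\epsilon_i\le n+4$, every absolute value in $L(r)$ vanishes and one obtains the clean identity $\ell(r)+L(r)=(n+1)(n+4)$; hence $\ell(r)\le L(r)$ iff $\ell(r)\le(n+1)(n+4)/2$. In the complementary regime $\epsilon_n+\sum_{i<n}\epsilon_i>n+4$, a quadratic counting bound (using $\ell(r)\ge(n+1)(n+5)-(n+1)K+K(K+1)/2$ with $K:=\sum_{i<n}\epsilon_i$, whose defect from $(n+1)(n+4)/2$ is a quadratic in $K$ with negative discriminant) forces $\ell(r)>(n+1)(n+4)/2$ automatically; combined with $\dist_{\Gamma_n}(r,\mathrm{id})\le(n+1)(n+4)/2$ from Theorem~\ref{diameter}, this yields $L(r)\le(n+1)(n+4)/2<\ell(r)$, so the choice $rg_n^{-1}$ is again correct. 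The main obstacle I anticipate is the identification $\ell(rg_n^{-1})=L(r)$, since $rg_n^{-1}=a_0^{\epsilon_0}\cdots a_{n-1}^{\epsilon_{n-1}}a_n^{\epsilon_n-(n+4)}$ falls outside the canonical $R_n$-form, making the meet/join route via the shift $a_n^{-\epsilon_n}$ the cleanest approach.
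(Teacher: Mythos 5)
The paper states Proposition~\ref{Rn-list} explicitly \emph{without proof} (it appears in the ``Final Remarks'' list of unproven properties of $St_n$), so there is no in-paper argument to compare against; judged on its own, your outline is essentially correct and every step I can check does check out. The reduction of ``shortest element of $rSt_n$'' to the Schreier-graph distance $\dist_{\Gamma_n}(r,id)$ is valid; specializing Lemma~\ref{gamma-distance} to $s=id$ does give $\min\{\ell(r),L(r)\}$ with $L(r)=\sum_{j=0}^n|n+4-\epsilon_n-\sum_{i=j}^{n-1}\epsilon_i|$; and your threshold analysis is right: when $\epsilon_n+\sum_{i<n}\epsilon_i\le n+4$, Claim~\ref{length-Rn} gives $\ell(r)+L(r)=(n+1)(n+4)$, while in the complementary regime your quadratic bound (discriminant $-(24n+23)<0$) forces $\ell(r)>(n+1)(n+4)/2$ and Theorem~\ref{diameter} caps the distance, so the dichotomy matches the ball $B(n,\frac{(n+1)(n+4)}{2})$ exactly. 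The one step you leave compressed is the one you flag yourself, $\ell(rg_n^{-1})=L(r)$: ``tracing the path'' is not yet an argument, since a reverse-cycle geodesic a priori only produces \emph{some} element of $rSt_n$ of length $L(r)$. The gap closes cleanly along the lines you indicate: by Observation~\ref{aut0} right multiplication by $a_n^{-\epsilon_n}$ is a color-preserving automorphism of $\Gamma_n$, so the word read along the reverse-cycle geodesic from $id$ to $r$ equals the word read along its image, which is a Hasse-diagram geodesic in $R_n$ from $x:=a_n^{n+4-\epsilon_n}$ to $y:=ra_n^{-\epsilon_n}$; since Hasse edges are left multiplications by Coxeter generators, that word multiplies out to exactly $yx^{-1}=rg_n^{-1}$, giving $\ell(rg_n^{-1})\le L(r)$, and the reverse inequality holds because $L(r)$ is the coset minimum in the relevant range. (Sanity check for $n=3$: $w_og_3^{-1}=a_0a_1a_2a_3^{-1}=s_0s_1s_0s_3$ has length $4=L(w_o)$.) One caveat: at the boundary $\ell(r)=\frac{(n+1)(n+4)}{2}$ (e.g.\ $r=a_4^4$ for $n=4$) both $r$ and $rg_n^{-1}$ attain the minimal length, so the ``unique shortest representative'' statement you cite in your opening plan is suspect there; fortunately your actual argument never uses it, and the proposition as stated only requires each element of $\hat R_n$ to be \emph{a} shortest representative.
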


\medskip

%One may define a (less natural) action of $\widetilde{C}_n$ on
%uncolored triangle free triangulations  $TFT(n)$ ...

%The stabilizer then is ...

%\medskip

%Another possible variant: action on partial $CTFT$.

%Add open problems ???

Finally, the computation of the diameter of the flip graph of
uncolored  triangle-free triangulations involves a surprisingly
subtle optimization problem and will be addressed elsewhere.

\section{Appendix: Proofs of Lemma~\ref{graph-description0} and
Corollary~\ref{left-descents}}\label{appendix1}

%\begin{proof}

\noindent{\bf Proof of Lemma~\ref{graph-description0}.}

First, notice that, by the braid relations of $\tC_n$, (letting
$a_{-1}:=id$)
\begin{equation}\label{sa2}
s_i a_j=a_j s_i\qquad (-2\le j-1<i\le n);
\end{equation}
\begin{equation}\label{sa1}
s_i a_i = a_{i-1} \ \ \ {\rm{and}}\ \ \ s_i a_{i-1} = a_i \qquad
(0\le i\le n)
\end{equation}
\begin{equation}\label{sa3}
s_i a_j=a_j s_{i+1} \qquad (0< i<j  \ {\rm{and}}\ %(i,j)\ne (n-1,n)
i\ne n-1 );
\end{equation}
and
\begin{equation}\label{sa4}
 s_{n-1} a_n= a_n g_0,
\end{equation}
and
\begin{equation}\label{sa5}
 a_n s_1= g_0 a_n,
\end{equation}
where $g_0:= s_0 s_1 \cdots s_{n-2} s_n s_{n-1} s_n s_{n-2} \cdots
s_1 s_0$. Recall that $g_0\in St_n$ (see
Theorem~\ref{t.stabilizer}).

\smallskip

We proceed by cases analysis.

\noindent{\bf Case (a).} $ \epsilon_{i-1}=0$ and $\epsilon_i>0$
(where $\epsilon_{-1}:=0$).

\noindent %Then $r=v a_i w$ for $v=a_0^{\epsilon_0}\cdots
%a_{i-2}^{\epsilon_{i-2}}\in \langle s_0,\dots,s_{i-2}\rangle$ and
%$w=a_{i+1}^{\epsilon_{i+1}}\cdots a_n^{\epsilon_n}\in \tC_n$.
%Hence
By (\ref{sa1}) and (\ref{sa2}),
$$s_i r= s_i a_0^{\epsilon_0}\cdots a_{i-2}^{\epsilon_{i-2}}
a_{i}^{\epsilon_i} a_{i+1}^{\epsilon_{i+1}}\cdots
a_n^{\epsilon_n}= a_0^{\epsilon_0}\cdots a_{i-2}^{\epsilon_{i-2}}
s_i a_i a_i^{\epsilon_i-1} a_{i+1}^{\epsilon_{i+1}}\cdots
a_n^{\epsilon_n}
$$
$$
= a_0^{\epsilon_0}\cdots a_{i-2}^{\epsilon_{i-2}} a_{i-1}
a_i^{\epsilon_i-1} a_{i+1}^{\epsilon_{i+1}}\cdots
a_n^{\epsilon_n}
%= a_0^{\epsilon_0}\cdots a_{i-2}^{\epsilon_{i-2}}
%a_{i-1} a_i^{\epsilon_i-1} a_{i+1}^{\epsilon_{i+1}}\cdots a_n^{\epsilon_n}
\in R_n.$$

\noindent{\bf Case (b).} $0<i<n$, $ \epsilon_{i-1}=1$ and
$\epsilon_i=0$, or $i=0$ and $\epsilon_0=0$. %, or $i=n$, $\epsilon_{n-1}=1$ and $\epsilon_n<n+3$.

\noindent Since $s_i^2=1$, it follows from the analysis of the
previous case that in this case  $$s_i r=a_0^{\epsilon_0}\cdots
a_{i-2}^{\epsilon_{i-2}} a_{i} a_{i+1}^{\epsilon_{i+1}}\cdots
a_n^{\epsilon_n}\in R_n.$$

\noindent{\bf Case (c).} $i=n$.

\noindent If $\epsilon_{n-1}=1$ then $s_n r=
a_0^{\epsilon_0}\cdots
a_{n-2}^{\epsilon_{n-2}}a_n^{\epsilon_n+1}$. For $\epsilon_n<n+3$
this is  an element in $R_n$. If $\epsilon_n=n+3$ then, since
$a_n^{n+4}\in St_n$, $s_n r\in a_0^{\epsilon_0}\cdots
a_{n-2}^{\epsilon_{n-2}} St_n$.

\noindent If $\epsilon_{n-1}=0$ then $s_n r=
a_0^{\epsilon_0}\cdots a_{n-2}^{\epsilon_{n-2}} a_{n-1}
a_n^{\epsilon_n-1}$. This element is in $R_n$ if $\epsilon_n>0$,
and belongs to $a_0^{\epsilon_0}\cdots a_{n-2}^{\epsilon_{n-2}}
a_{n-1} a_n^{\epsilon_n+3}St_n$ otherwise.

\smallskip

\noindent{\bf Case (d).} $0<i<n$, $\epsilon_{i-1}=1$ and
$\epsilon_i=1$.

\noindent By the braid relations of $\tC_n$, for every $0<i<n$,
$s_i a_{i-1} a_i= a_i^2= a_{i-1} a_i s_1$. Hence
$$
s_i r=s_i a_0^{\epsilon_0}\cdots a_{i-2}^{\epsilon_{i-2}}
a_{i-1}a_i a_{i+1}^{\epsilon_{i+1}}\cdots
a_n^{\epsilon_n}=a_0^{\epsilon_0}\cdots a_{i-2}^{\epsilon_{i-2}}
s_i a_{i-1}a_i a_{i+1}^{\epsilon_{i+1}}\cdots a_n^{\epsilon_n}
$$
$$
=a_0^{\epsilon_0}\cdots a_{i-2}^{\epsilon_{i-2}} a_{i-1} a_i s_1
a_{i+1}^{\epsilon_{i+1}}\cdots a_n^{\epsilon_n} =
  r s_1\in r St_n
$$

\smallskip

\noindent{\bf Case (e).} $0<i<n$, $\epsilon_{i-1}=0$ and
$\epsilon_i=0$.

\noindent By (\ref{sa2})
$$s_i r= a_0^{\epsilon_0}\cdots
a_{i-2}^{\epsilon_{i+2}} s_i a_{i+1}^{\epsilon_{i+1}}\cdots
a_n^{\epsilon_n}= a_0^{\epsilon_0}\cdots a_{i-2}^{\epsilon_{i+2}}
a_{i+1}^{\epsilon_{i+1}}\cdots a_{n-1}^{\epsilon_{n-1}}
s_{i+k}a_n^{\epsilon_n},
$$
where $k:=\#\{j:\ i<j<n, \epsilon_j=1\}$.
%If $i+k<n-\epsilon_n$ then, by (\ref{sa3}), $s_i r= r
%s_{i+k+\epsilon_n}$, and

If $i+k<n-\epsilon_n$ then, by (\ref{sa3}),
$s_{i+k}a_n^{\epsilon_n}= a_n^{\epsilon_n} s_{i+k+\epsilon_n}$, so
that $s_ir=r s_{i+k+\epsilon_n}$. Since $0<i+k+\epsilon_n<n$,
$s_{i+k+\epsilon_n}\in St_n$, thus $s_i r\in r St_n$.

%If $i+k\ge n-\epsilon_n$ and $\epsilon_n=1$, then by (\ref{sa4}),
% $s_{i+k}a_n^{\epsilon_n}=a_n^{\epsilon_n} g_0$, thus $s_i r= r g_0\in r St_n$.
If $i+k\ge n-\epsilon_n$, then by definition of $k$, $i+k=n-1$. By
(\ref{sa4}), (\ref{sa5}) and (\ref{sa3}),
$$
s_{i+k}a_n^{\epsilon_n}=a_n g_0 a_n^{\epsilon_n-1}= %a_n^2 s_1 a_n^{\epsilon_n-2}=
\begin{cases}
a_n^{\epsilon_n} g_0,&\ {\rm{if}}\ \epsilon_n=1,\\
a_n^{\epsilon_n} s_{\epsilon_n-1},&\ {\rm{if}}\ 1<\epsilon_n\le n,\\
a_n^{\epsilon_n} g_0,&\ {\rm{if}}\ \epsilon_n= n+1,\\
a_n^{\epsilon_n} s_{m-1},&\ {\rm{if}}\  \epsilon_n= n+m,\ m=2,3.\\
\end{cases}
$$
Hence $s_i r\in r St_n$.

%\end{proof}

\qed

\medskip

\noindent{\bf Proof of Corollary~\ref{left-descents}.}
%\begin{proof}
The proof follows from the case-by-case analysis in the proof of
Lemma~\ref{graph-description0}. If $ \epsilon_{i-1}=0$ and
$\epsilon_i>0$ then $s_i r= s_i (\cdots a_{i-1}^0
a_i^{\epsilon_i}\cdots )= \cdots a_{i-1} a_i^{\epsilon_i-1}\cdots
$. By Claim~\ref{length-Rn}, $\ell(s_i r)<\ell(r)$. If
$\epsilon_{i-1}=1$ and $\epsilon_i=0$, by same argument $\ell(s_i
r>\ell(r)$. Similarly, for $i=n$ and $\epsilon_{n-1}=1$.
Otherwise, by Lemma~\ref{graph-description0}, $s_i r=r g$ for some
$g\in St_n$. By the length-additivity property~\cite[\S
1.10]{Hum}, $\ell (s_i r)=\ell(r)+\ell(g)\ge \ell(r)$.

%\end{proof}

\qed

\end{document}